\newtheorem{theorem}{Theorem}[section]
\newtheorem{remark}[theorem]{Remark}
\newtheorem{assumption}[theorem]{Assumption}
\newtheorem{lemma}[theorem]{Lemma}
\newtheorem{proposition}[theorem]{Proposition}
\newtheorem{definition}[theorem]{Definition}
\def \E{\mathsf{E}}
\def \P{\mathsf{P}}
\def \I{\mathcal{I}}
\def \Q{\mathbb{Q}}
\def\X{\widehat{X}}
\def\PP{\widehat{\mathsf{P}}}
\DeclareMathOperator*{\essinf}{ess\,inf}
\title[Public Debt Control under Partial Information]{Optimal Reduction of Public Debt under Partial Observation of the Economic Growth}
\author[Callegaro, Ceci, Ferrari]{Giorgia Callegaro, Claudia Ceci, and Giorgio Ferrari}
\keywords{}
\address{G.~Callegaro: Department of Mathematics ``Tullio Levi-Civita'', University of Padova, Via Trieste, 35121 Padova, Italy}
\email{\href{mailto::gcallega@math.unipd.it}{gcallega@math.unipd.it}}
\address{C.~Ceci: Department of Economics, University ``G.\ D'annunzio'' of Chieti-Pescara, Viale Pindaro 42, I-65127 Pescara, Italy}
\email{\href{mailto: claudia.ceci@unich.it}{claudia.ceci@unich.it}}
\address{G.~Ferrari: Center for Mathematical Economics (IMW), Bielefeld University, Universit\"atsstrasse 25, 33615 Bielefeld, Germany}
\email{\href{mailto:giorgio.ferrari@uni-bielefeld.de}{giorgio.ferrari@uni-bielefeld.de}}
\date{\today}
\numberwithin{equation}{section}
\begin{document}

\begin{abstract} 
We consider a government that aims at reducing the debt-to-gross domestic product (GDP) ratio of a country. The government observes the level of the debt-to-GDP ratio and an indicator of the state of the economy, but does not directly observe the development of the underlying macroeconomic conditions. The government's criterion is to minimize the sum of the total expected costs of holding debt and of debt's reduction policies. We model this problem as a singular stochastic control problem under partial observation. The contribution of the paper is twofold. Firstly, we provide a general formulation of the model in which the level of debt-to-GDP ratio and the value of the macroeconomic indicator evolve as a diffusion and a jump-diffusion, respectively, with coefficients depending on the regimes of the economy. These are described through a finite-state continuous-time Markov chain. We reduce via filtering techniques the original problem to an equivalent one with full information (the so-called separated problem), and we provide a general verification result in terms of a related optimal stopping problem under full information. Secondly, we specialize to a case study in which the economy faces only two regimes, and the macroeconomic indicator has a suitable diffusive dynamics. In this setting we provide the optimal debt reduction policy. This is given in terms of the continuous free boundary arising in an auxiliary fully two-dimensional optimal stopping problem. 
\end{abstract}

\maketitle

\smallskip

{\textbf{Keywords}}: singular stochastic control; partial observation; filtering; separated problem; optimal stopping; free boundary; debt-to-GDP ratio.

\smallskip

{\textbf{MSC2010 subject classification}}: 93E20, 60G35, 93E11, 60G40, 60J60, 91B64.

%\smallskip
%
%{\textbf{JEL classification}}: C61, H63.

%%%%%******** END OUR FORMAT ********

\section{Introduction}

The question of optimally managing debt-to-GDP ratio (also called ``debt ratio'') of a country has become particularly important in the latest years. Indeed, concurrently with the financial crisis started in 2007, debt-to-GDP ratio exploded from an average of 53\% to circa 80\% in developed countries. Clearly, the debt management policy of a government highly depends on the underlying macroeconomic conditions; indeed, these affect, for example, the growth rate of GDP which, in turn, determines the growth rate of the debt-to-GDP ratio of a country. However, in practice it is typically neither possible to measure in real-time the growth rate of GDP, nor one can directly observe the underlying business cycles. On August 24, 2018, Jerome H.\ Powell, Chairman of the Federal Reserve, said:\footnote{Speech at ``Changing Market Structure and Implications for Monetary Policy'', a symposium sponsored by the Federal Reserve Bank of Kansas City in Jackson Hole, Wyoming.} 
\vspace{0.1cm}

\emph{...In conventional models of the economy, major economic quantities such as inflation, unemployment, and the growth rate of gross domestic product fluctuate around values that are considered ``normal'', or ``natural'' or ``desired''. The FMOC (Federal Open Market Committee) has chosen a 2 percent inflation objective as one of these desired values. \textbf{The other values are not directly observed, nor can they be chosen by anyone}...}
\vspace{0.1cm}

Following an idea that dates back to \cite{Hamilton}, in this paper we suppose that the GDP growth rate of a country is modulated by a continuous-time Markov chain that is not directly observable. The Markov chain has $Q\geq2$ states modeling the different business cycles of the economy, so that a shift in the macroeconomic conditions induces a change in the value of the growth rate of GDP. The government can observe only the current levels of the debt-to-GDP ratio and of a macroeconomic indicator. The latter might be, e.g., one of the so-called ``Big Four''\footnote{These indicators constitute the Conference Board's Index of Coincident Indicators; they are employment in non agricultural businesses, industrial production, real personal income less transfers, and real manufacturing and trade sales. We refer to, e.g., \cite{SW2003}, where the authors present a wide range of economic indicators and examine the forecasting performance of various of them in the recession of 2001.}, which are usually considered proxies of the industrial production index, hence of the business conditions. 

The government may intervene in order to decrease the level of the debt ratio, e.g.\ through fiscal policies or imposing austerity policies in the form of spending cuts. We assume that the debt ratio is instantaneously affected by any such policy. Debt reductions must not necessarily be performed at rates, but also lump sum actions are allowed, and the cumulative amount of debt ratio's decrease is the government's control variable. Any decrease of the debt ratio results in proportional costs, and the government aims at choosing a debt-reduction policy that minimizes the total expected costs of holding debt, plus the total expected costs of interventions on the debt ratio. 
In line with recent papers on stochastic control methods for optimal debt management (see \cite{CadAgui}, \cite{CadAgui2}, \cite{Ferrari18} and \cite{FeRo}), we model the previous problem as a singular stochastic control problem. However, differently to all the previous works, our problem is formulated in a partial observation setting, thus leading to a completely different mathematical analysis. In our model, the observations consist of the debt ratio and of the macroeconomic indicator. The 
debt ratio is a linearly controlled geometric Brownian motion, and its drift is given in terms of the GDP growth rate, which is modulated by the unobservable continuous-time Markov chain $Z$. The macroeconomic indicator is a real-valued jump-diffusion which is correlated to the debt ratio process, and which has drift, and both intensity and jump sizes, depending on $Z$.
\vspace{0.15cm}

\textbf{Our Contributions.} Our study of the optimal debt reduction problem is performed thought three main steps. 

First of all, via advanced filtering techniques with mixed-type observations, we reduce the original problem to an equivalent problem under full information, the so-called separated problem. The filtering problem consists in characterizing the conditional distribution of the unobservable Markov chain $Z$, at any time $t$, given observations up to time $t$. The case of diffusion observations has been widely studied in literature and textbook treatments can be found in \cite{Ell},  \cite{Kall}, and \cite{LipShir}. There are also known results for pure-jump observations (see, e.g., \cite{Bremaud}, \cite{CeciGerardi00}, \cite{CeciGerardi06}, \cite{KKM}, and references therein). More recently, filtering problems with mixed-type information, which involve pure-jump processes and diffusions, have been studied in \cite{ColaneriCeci2012}, \cite{ColaneriCeci2014}, and \cite{Frey}.  Due to the structure of our observations' dynamics we cannot apply the probability reference method (see \cite{ColaneriCeci2014} and \cite{Zakai}), and for this reason we choose an alternative route based on the innovation approach, which leads to the Kushner-Stratonovich equation. Moreover, differently to \cite{ColaneriCeci2012} and \cite{Frey}, in our framework the innovation process is two-dimensional and, therefore, the innovation method employed in these papers must be suitably adapted to our context. By showing that the Kushner-Stratonovich equation admits a unique strong solution, we are then able to prove that the original problem under partial observation and the separated problem are equivalent in the sense that they share the same value and the same optimal control.

%As observed in \cite{El Karoui} and \cite{CeciGerardi98}, in order to prove the equivalence between a control problem under partial information and the separated one, it is crucial to show uniqueness of the solution to the filtering equation. Since the filter is finite-dimensional, in our framework we can prove strong uniqueness of the solution by exploiting the recursive structure of the Kushner-Stratonivich equation, and by applying classical results between two consecutive jump times of observations. This allows us to show that the value function and the optimal control of the separated problem coincide with those of the original problem under partial observation.

Secondly, we exploit the convex structure of the separated problem, and we provide a general probabilistic verification theorem. This result - which is in line with findings in \cite{BK}, \cite{DeAFeFe} and \cite{Ferrari18}, among others - relates the optimal control process to the solution to an auxiliary optimal stopping problem. Moreover, it proves that the value function of the separated problem is the integral - with respect to the controlled state variable - of the value function of the optimal stopping problem. The stopping problem thus gives the optimal timing at which one additional unit of debt should be reduced.

Finally, by specifying a setting in which the continuous-time Markov chain faces only two regimes (a fast growth or slow growth phase) and the macroeconomic indicator is a suitable diffusion process, we are able to characterize the optimal debt reduction policy. In this framework, the filter process is a two-dimensional process $(\pi_t,1-\pi_t)_{t\geq0}$, where $\pi_t$ is the conditional probability at time $t$ that the economy enjoys the fast growth phase. We prove that the optimal control prescribes to keep at any time the debt ratio below an endogenously determined curve that is a function of government's belief about the current state of the economy. Such a debt ceiling is the free boundary of the fully two-dimensional optimal stopping problem that is related to the separated problem in the sense of the previously discussed verification theorem. By using almost exclusively probabilistic means, we are able to show that the value function of the auxiliary optimal stopping problem is a $C^1$-function of its arguments, and thus enjoys the so-called smooth-fit property. Moreover, the free boundary is a continuous, bounded, and increasing function of the filter process. This last monotonicity property has also a clear economic interpretation: the more the government believes that the economy enjoys a regime of fast growth, the less strict the optimal debt reduction policy should be. 

As a remarkable byproduct of the regularity of the value function of the optimal stopping problem, we also obtain that the value function of the singular stochastic control problem is a classical solution to its associated Hamilton-Jacobi-Bellman (HJB) equation. The latter takes the form of a variational inequality involving an elliptic second-order partial differential equation (PDE). It is worth noticing that the $C^2$ regularity of the value function implies the validity of a second-order principle of smooth fit, usually observed in one-dimensional problems.

We believe that the study of the auxiliary fully two-dimensional optimal stopping problem is a valuable contribution to the literature on its own. Indeed, if the literature on one-dimensional optimal stopping problems is very rich, the problem of characterizing the optimal stopping rule in multi-dimensional settings has been so far rarely explored in the literature (see the recent \cite{CCMS}, \cite{DeAFeFe} and \cite{JP2017}, among the very few papers dealing with multi-dimensional stopping problems). This discrepancy is due to the fact that a standard guess-and-verify approach, based on the construction of an explicit solution to the variational inequality arising in the considered optimal stopping problem, is not anymore applicable in multi-dimensional settings where the variational inequality involves a PDE rather than an ordinary differential equation.
\vspace{0.15cm}

\textbf{Related Literature.} As already noticed above, our paper is placed among those recent works addressing the problem of optimal debt management via continuous-time stochastic control techniques. In particular, \cite{CadAgui} and \cite{CadAgui2} model an optimal debt reduction problem as a one-dimensional control problem with singular and bounded-velocity controls, respectively. In \cite{FeRo} the government is allowed to increase and decrease the current level of debt ratio, and the interest rate on debt is modulated by a continuous-time observable Markov chain. The mathematical formulation leads to a one-dimensional bounded-variation stochastic control problem with regime switching. In \cite{Ferrari18}, when optimally reducing the debt ratio, the government takes into consideration the evolution of the inflation rate of the country. The latter evolves as an uncontrolled diffusion process and affects the growth rate of the debt ratio, which is a process of bounded variation. In this setting, the debt reduction problem is formulated as a two-dimensional singular stochastic control problem whose HJB equation involves a second-order linear parabolic partial differential equation. All the previous papers are formulated in a full information setting, while ours is under partial observation.

The literature on singular stochastic control problems under partial observation is also still quite limited. Theoretical results on the PDE characterization of the value function of a two-dimensional optimal correction problem under partial observation are obtained in \cite{MenaldiRobin83}, whereas a general maximum principle for a not necessarily Markovian singular stochastic control problem under partial information has been more recently derived in \cite{OksendalSulem}. We also refer to \cite{DeA18} and \cite{DV}, where it is provided a thorough study of the optimal dividend strategy in models in which the surplus process evolves as drifted Brownian motion with unknown drift that can take only two constant values, with given probability. 
\vspace{0.15cm}

\textbf{Outline of the Paper.} The rest of the paper is organized as follows. In Section \ref{sec:problem} we introduce the setting and formulate the problem. The reduction of the problem under partial observation to the separated problem is performed in Section \ref{sec:reduction}; in particular, the filtering results are presented in Section \ref{sec:filtering}. The probabilistic verification theorem connecting the separated problem to one of optimal stopping is then proved in Section \ref{sec:verificationthm}. In Section \ref{sec:casestudy} we then consider a case study in which the economy faces only two regimes. Its solution, presented in Sections \ref{sec:optcontr} and \ref{sec:regularityV}, hinges on the study of a two-dimensional optimal stopping problem that is performed in Section \ref{sec:relatedOS}. Finally, Appendix \ref{app:proofs} collects the proofs of some technical filtering results.

%%%%%%%%%%%%%%%%%%%%%%%%%%%%%%%%%%%%%%%%%%%%%

\section{Setting and Problem Formulation}
\label{sec:problem}

\subsection{The Setting}
\label{sec:setting}

Consider the complete filtered probability space $(\Omega,\mathcal F,(\mathcal F_t)_{t \ge 0},\P)$, capturing all the uncertainty of our setting. Here, $\mathbb{F}:={(\mathcal F_t)}_{t \ge 0}$ denotes the \emph{full information filtration}. We suppose that such a filtration satisfies the usual hypotheses of completeness and right-continuity. 

We denote by $Z$ a continuous-time finite-state Markov chain describing the different states of the economy. For $Q\geq 2$, let $S := \{1, 2,\dots, Q\}$ be the state space of $Z$ and $\{\lambda_{ij}\}_{1 \leq i,j \leq Q}$ its generator matrix. Here, $\lambda_{ij}$, $i \neq j$, gives the intensity of a transition from state $i$ to state $j$, and it is such that $\lambda_{ij} \geq 0$, for $i \neq j$, and $\sum_{j=1, j\ne i}^Q \lambda_{ij} = - \lambda_{ii}$. For any time $t\geq0$, $Z_t$ is $\mathcal{F}_t$-measurable.

In absence of any intervention by the government, we assume that the (uncontrolled) debt-to-GDP ratio evolves as
\begin{equation}
\label{Xzero}
d X^0_t = \big(r - g(Z_t)\big) X^0_t dt + \sigma X^0_t dW_t , \quad X^0_0=x \in (0,\infty),
\end{equation}
where $W$ is a standard $\mathbb{F}$-Brownian motion on $(\Omega, \mathcal{F})$ independent of $Z$, $r\geq0$ and $\sigma>0$ are constants, and $g: S \rightarrow \mathbb R$. The constant $r$ is the real interest rate on debt, $\sigma$ is the debt's volatility, and $g(i) \in \mathbb{R}$ is the rate of the GDP's growth when the economy is in state $i\in S$.  

It is clear that equation \eqref{Xzero} admits a unique strong solution, and, when needed, for any $x>0$ we shall denote it by $X^{x,0}$. The current level of the debt-to-GDP ratio is known to the government at any time $t$, and $X^{x,0}$ is therefore the first component of the so-called \emph{observation process}.

The government also observes a macroeconomic stochastic indicator $\eta$, e.g.\ one of the so-called ``Big Four'', which we interpret as a proxy of the business conditions. We assume that $\eta$ is a jump-diffusion process solving the stochastic differential equation
\begin{equation}
\label{eta}
d \eta_t= b_1(\eta_t, Z_t)  dt + \sigma_1(\eta_t) dW_t + \sigma_2(\eta_t) dB_t + c(\eta_{t^-},  Z_{t^-}) dN_t,  \qquad \eta_{0}= q \in  \I,
\end{equation}
where $b_1$,  $c$, $\sigma_1 >0$, and $\sigma_2>0$ are measurable functions of their arguments, and $\I \subseteq \mathbb R$ is the state space of $\eta$. Here, $B$ is an $\mathbb{F}$-standard Brownian motion, independent of $W$ and $Z$. Moreover, $N$ is an $\mathbb{F}$-adapted point process, without common jump times with $Z$, independent of $W$ and $B$. The predictable intensity of $N$ is denoted by $\{\lambda^N(Z_{t^-})\}_{t\geq 0}$ and depends on the current state of the economy, with $\lambda^N(\,\cdot\,)>0$ being a measurable function. From now on, we assume the following assumptions that ensure strong existence and uniqueness of the solution to equation \eqref{eta} (see \cite{GS}, among others). 

\begin{assumption}
\label{ass:eta}
The functions $b_1: \I \times S \to \mathbb{R}$, $\sigma_1: \I \to (0,\infty)$, $\sigma_2: \I \to (0,\infty)$, and $c: \I \times S \to \mathbb{R}$ are such that for any $i \in S$:
\begin{itemize}
\item[(i)] (Continuity) $b_1(\cdot, i)$, $\sigma_1(\cdot)$, $\sigma_2(\cdot)$ and $c(\cdot, i)$ are continuous;
\item[(ii)] (Local Lipschitz conditions) for any $R>0$, there exists a constant $L_R>0$ such that if $|q|< R, |q'|<R$, $q, q' \in \I$, then
$$|b_1(q, i) - b_1(q', i)| + |\sigma_1(q)- \sigma_1(q')| + |\sigma_2(q)- \sigma_2(q')| + |c(q, i) - c(q', i)| \leq L_R|q-q'|;$$
\item[(iii)] (Growth conditions) there exists a constant $C>0$ such that  
$$|b_1(q, i)|^2 +  |\sigma_1(q)|^2 +  |\sigma_2(q)|^2 + |c(q, i)|^2 \leq C (1 + |q|^2).$$
\end{itemize}
\end{assumption}

The dynamics proposed in equation \eqref{eta} is of a jump-diffusive type, and it allows for a size and intensity of the jumps affected by the state of the economy. It is therefore flexible enough to describe a large class of stochastic factors which may exhibit jumps. 

The \emph{observation filtration} $\mathbb H = {(\mathcal H_t)}_{t \ge 0}$ is defined as 
\begin{equation}
\label{FiltrationH}
\mathbb H := \mathbb F^{X^0} \vee \mathbb F^\eta, 
\end{equation}
where $\mathbb F^{X^0}$ and $\mathbb F^\eta$ denote the natural filtrations generated by $X^0$ and $\eta$, respectively, as usual augmented by $\P$-null sets. Clearly, $(X^0, \eta)$ is adapted to both $\mathbb H$ and $\mathbb F$, and 
$$\mathbb H \subset \mathbb F.$$
The above inclusion means that the government cannot directly observe the state of the economy $Z$, but this has to be inferred through the observation of 
$(X^0, \eta)$. We are therefore working in a \emph{partial information setting}.

%%%%%%%%%%%%%%%%%%%%%%%%%%%%%%

\subsection{The Optimal Debt Reduction Problem}
\label{sec:Problem-P1}

The government can reduce the level of the debt-to-GDP ratio by intervening on the primary budget balance (i.e.\ the overall difference between government revenues and spending), for example through austerity policies in the form of spending cuts. By doing so the debt ratio dynamics modifies as 
 \begin{equation}
\label{eq:dynX}
dX^{\nu}_t= \big(r - g(Z_t)\big) X^{\nu}_t dt + \sigma X^{\nu}_t dW_t - d \nu_t, \quad X^{\nu}_{0^-}=x >0.
\end{equation}
The process $\nu$ is the control that the government chooses based on the information at its disposal. Precisely, $\nu_t$ defines the cumulative reduction of the debt-to-GDP ratio made by the government up to time $t$, and $\nu$ is therefore a nondecreasing process belonging to the set
\begin{align*}
& \mathcal M(x, \underline y,q):=\Big\{\nu:\Omega \times \mathbb R_+ \rightarrow \mathbb R_+ : {(\nu_t(\omega) := \nu (\omega,t))}_{t \ge 0} \ \textrm{is nondecreasing, right-continuous,} \nonumber \\
& \hspace{0.3cm} \mathbb H-\textrm{adapted, such that}\, X_t^{\nu} \ge 0 \ \textrm{for every} \ t \ge 0,\ X^{\nu}_{0^-}=x,\,\, \P(Z_0=i)=y_i,\,\, i \in S,\,\, \eta_0=q \ \textrm{a.s.} \Big\},
\end{align*}
for any given and fixed $x \in (0, \infty)$ initial value of $X^{\nu}$, $q \in  \I$ initial value of $\eta$, and ${\underline y} \in \mathcal{Y}$. Here  
$$\mathcal{Y} :=  \Big\{{\underline y} = (y_1, \dots y_Q): y_i \in [0,1], \,\,\,  i=1, \dots Q,  \,\,\, \sum_{i=1}^Q  y_i =1 \Big\},$$
is the probability simplex on $\mathbb R^Q$, representing the space of initial distributions of the process $Z$. From now on, we set $\nu_{0^-}=0$ a.s.\ for any $\nu \in \mathcal M(x, \underline y,q)$. 

\begin{remark}
\label{rem:notation}
Notice that in the definition of the set $\mathcal{M}$ above, as well as in \eqref{costfunctional} and in {\textbf{(P1)}} below, we have stressed the dependency on the initial data $(x,\underline{y},q)$ just for notational convenience, and not to stress any Markovian nature of the considered problem, which is in fact not such.
\end{remark}

For any $(x,\underline{y},q) \in (0,\infty) \times \mathcal{Y} \times \mathcal{I}$ and $\nu \in \mathcal{M}(x,\underline y,q)$, there exists a unique solution to \eqref{eq:dynX}, denoted by  $X_t^{x,\nu}$, that is given by
\begin{equation}
\label{soldynX}
X_t^{x,\nu} = X^{1,0}_t\bigg[x - \int_0^t \frac{d\nu_s}{X^{1,0}_s}\bigg], \qquad t \geq 0, \quad X_{0^-}^{x,\nu}=x,
\end{equation}
where 
$$X^{1,0}_t = \displaystyle e^{\int_0^t (r-g(Z_s)) ds - {\frac{1}{2}} \sigma^2 t  + \sigma W_t}, \qquad t \geq 0.$$  

Here, and in the rest of this paper, we shall use the notation $\int_0^t (\,\cdot\,)d\nu_s = \int_{[0,t]} (\,\cdot\,) d\nu_s$ for the Lebesgue-Stieltjes integral with respect to the random measure $d\nu_{\cdot}$ induced by the nondecreasing process $\nu$ on $[0,\infty)$.

\begin{remark}
\label{rem:micro}
The dynamics \eqref{eq:dynX} might be justified in the following way. Suppose that the public debt (in real terms), $D$, and the GDP, $Y$, follow the classical dynamics
\begin{equation*}
\left\{
\begin{array}{rcl}
dD_t &=& r D_t dt - d \xi_t, \qquad \quad \,\,\quad D_{0^-}=d > 0,\\
dY_t&=& g(Z_t) Y_t dt + \sigma Y_t dW_t, \quad Y_0= y>0,\\
\end{array}
\right.
\end{equation*}
where $\xi_t$ is the cumulative real budget balance up to time $t$. An easy application of It\^o's formula then gives that the ratio $X:=D/Y$ evolves as in \eqref{eq:dynX}, upon setting $d\nu:=d\xi/Y$. 
\end{remark}

The government aims at reducing the level of the debt ratio. Having a level of debt ratio $X_t=x$ at time $t\geq 0$ when the state of the economy is $Z_t=i$, the government incurs  an instantaneous cost $h(x,i)$. This cost may be interpreted as a measure of the resulting losses for the country due to the debt, as, e.g., a tendency to suffer low subsequent growth (see \cite{ECB13} and \cite{WK15}, among others). The cost function $h:\mathbb{R} \times S \mapsto \mathbb{R}_+$ fulfills the following requirements (see also \cite{CadAgui} and \cite{Ferrari18})
\begin{assumption}
\label{ass:h}
\begin{itemize}\hspace{10cm}
\item[(i)] For any $i\in S$, the mapping $x \mapsto h(x,i)$ is strictly convex, continuously differentiable, and it is nondecreasing on $\mathbb{R}_+$. Moreover, $h(0,i)=0$; 
\item[(ii)] For any given $x\in (0,\infty)$ and $i\in S$ one has  
$$
\E\bigg[\int_0^{\infty} e^{-\rho t} h\big(X_t^{x,0}, i \big) dt\bigg] + \E\bigg[\int_0^{\infty} e^{-\rho t} X_t^{1,0} h_x \big(X_t^{x,0}, i \big) dt\bigg] < \infty.
$$
\end{itemize}
\end{assumption}
A quadratic cost function of the form $h(x,i) = \frac{1}{2}\vartheta_i x^2$, $(x,i) \in [0,\infty)\times S$, $\vartheta_i>0$, clearly satisfies Assumption \ref{ass:h} for a suitable $\rho>0$. 

Whenever the government intervenes in order to reduce the debt-to-GDP ratio, it incurs a proportional cost. We assume that the marginal cost of each intervention is normalized to one.

Given an intertemporal discount rate $\rho>0$, for any given and fixed $(x,\underline y,q) \in (0,\infty) \times \mathcal{Y} \times \I$, the government thus aims at minimizing the expected total cost functional
\begin{equation}
\label{costfunctional}
\mathcal{J}_{x, \underline y,q}(\nu) := \E_{(x,\underline y ,q)}\bigg[\int_0^{\infty} e^{-\rho t} h\big(X_t^{x,\nu}, Z_t\big) dt  + \int_0^{\infty} e^{-\rho t} d \nu_t\bigg], \quad \nu \in \mathcal{M}(x, \underline y, q).
\end{equation}
Here $\E_{(x, \underline y,q)}$ is the expectation under the condition that $X^{x,\nu}_{0^-}=x$, $Z$ has initial distribution $\underline y$, and 
$\eta_{0}=q$. The government's problem under partial observation can be therefore defined as
\begin{equation*}
{\textbf{(P1)}} \qquad V_{po}(x,\underline y ,q):= \inf_{\nu \in \mathcal M(x,\underline y,q)} \mathcal{J}_{x,\underline y,q}(\nu), \quad (x,\underline y,q) \in (0,\infty) \times \mathcal{Y} \times \I.
\end{equation*}

One has that $V_{po}$ is well defined and finite. Indeed, it is nonnegative, due to the nonnegativity of $h$; moreover, since the admissible policy ``instantaneously reduce at initial time the debt ratio to $0$'' is a priori suboptimal and it has cost $x$, then $V_{po} \leq x$.

We would like to stress once more that any $\nu \in \mathcal M(x,\underline y,q)$ is $\mathbb H$-adapted, and therefore Problem \textbf{(P1)} is a stochastic optimal control problem under partial observation. In particular, it is a \emph{singular stochastic control problem under partial observation}; that is, an optimal control problem in which the random measures induced by the nondecreasing control processes on $[0,\infty)$ might be singular with respect to the Lebesgue measure, and in which one component of the state variable, $Z$, is not directly observable by the controller.  

In its current formulation, the optimal debt reduction problem is not Markovian and it is therefore not directly solvable by standard means of stochastic control theory. In the next section, by using techniques from filtering theory, we will introduce an equivalent problem under complete information, the so-called \emph{separated problem}. This will enjoy a Markovian structure and its solution will be characterized in Section \ref{sec:verificationthm} through a Markovian optimal stopping problem.

%%%%%%%%%%%%%%%%%%%%%%%%%%%%%%%%%%%%%%%%%%%%%%%%%%%%%%%%%%%%%%%

\section{Reduction to an Equivalent Problem under Complete Information}
\label{sec:reduction}

In this section we derive the \emph{separated problem}. To this end, we first study the filtering problem arising in our model. As already discussed in the introduction, results on such a filtering problem cannot be directly obtained from existing literature due to the structure of our dynamics.

\subsection{The Filtering Problem}
\label{sec:filtering}

The filtering problem consists in finding the best-mean squared estimate of $f(Z_t)$, for any $t$ and any measurable function $f$, on the basis of the information collected up to time $t$. In our setting, such an information flow is given by the filtration $\mathbb{H}$. That estimate can be described through the filter process ${(\pi_t)}_{t \ge 0}$, which provides the conditional distribution of $Z_t$ given $\mathcal{H}_t$, for any time $t$ (see, for instance, \cite{LipShir}). It is defined as the $\mathbb H$-c\`{a}dl\`{a}g (right-continuous with left limits) process taking values in the space of probability measures on $S = \{1, \dots, Q\}$ such that
\begin{equation}
\label{pi1}
\pi_t(f) := \E\big[f(Z_t)\big|\mathcal{H}_t\big],
\end{equation}
for all measurable functions $f$ on $S$.  
Since $Z$ takes only a finite number of values, the filter is completely described by the vector 
\begin{equation}
\pi_t(f_i) =\P(Z_t = i | \mathcal H_t), \quad i \in S,
\end{equation}
where $f_i(z):= \mathds{1}_{\{z=i\}}$, $i \in S$. With a slight abuse of notation, in the following we will denote by $\pi(i)$ the process 
$\pi(f_i)$, so that for all measurable functions $f$ we have from \eqref{pi1}, $\pi_t(f) = \sum_{i=1}^Q f(i) \pi_t(i).$

Setting $\beta(Z_t):=r-g(Z_t)$ and, accordingly, $\beta_i:= r - g(i)$, $i \in S$, notice that $\beta$ is clearly a bounded function. Then, define the two processes $I$ and $I^1$ such that for any $t\geq0$
\begin{equation} \label{Inn}
I_t := W_t - \int_0^t \sigma^{-1}  \big( \pi_s(\beta) - \beta(Z_s) \big) ds, \quad I^1_t := B_t - \int_0^t \big( \pi_s(\alpha(\eta_s,\,\cdot\,)) - \alpha(\eta_s, Z_s) \big) ds,
\end{equation}
where
\begin{equation} 
\label{alfa}
\alpha(q, i):= \sigma_2(q)^{-1}  \Big\{ b_1(q, i)  - \sigma^{-1} \beta(i) \sigma_1(q)\Big\}, \quad (q,i) \in \I \times S.
\end{equation}

Henceforth, we will work under the following Novikov's condition. 
\begin{assumption}
\label{ass:Novikov}
\begin{equation}
\label{Novikov}
\displaystyle  \E\left[e^{\frac{1}{2}\int_0^t  \alpha^2(\eta_s, Z_s) ds} \right]< \infty, \quad \text{for any}\,\,t\geq0.
\end{equation}
\end{assumption}

Under Assumption \ref{ass:Novikov}, by classical results from filtering theory (see, e.g., \cite{LipShir}), the \emph{innovation processes} $I$ and $I^1$ are Brownian motions with respect to the filtration $\mathbb{H}$. Moreover, given the assumed independence of $B$ and $W$, they turn out to be independent.  

The integer-valued random measure associated to the jumps of $\eta$ is defined as
\begin{equation} 
\label{measure-m}
m(dt, dq):= \sum_{s: \Delta \eta_s \neq 0} \delta_{( s,  \Delta \eta_s  )} (ds, dq),
\end{equation}
where $\delta_{(a_1,a_2)}$ denotes the Dirac measure at point $(a_1,a_2) \in \mathbb{R}_+ \times \mathbb{R}$. Notice that the $\mathbb H$-adapted random measure $m$ is such that
\begin{equation} 
\int_0^t c(\eta_{s^-}, Z_{s^-}) \mathds{1}_{\{ c(\eta_{s^-}, Z_{s^-}) \neq 0\}} dN_s = \int_0^t \int_{\mathbb R} q\, m(ds,dq), \quad t>0.
\end{equation}

To proceed further we need the following useful definitions.

\begin{definition}
\label{def:predictproj}
($\mathbb G$-Predictable Process indexed by $\mathbb{R}$). Given any filtration $\mathbb G$, let  ${\mathcal  P}(\mathbb G)$ denote the predictable $\sigma$-field on $(0,\infty)\times \Omega$ and ${\mathcal  B}(\mathbb R)$ the Borel $\sigma$-algebra on $\mathbb R$. Any mapping $H: (0,\infty) \times \Omega\times \mathbb R \to \mathbb R$ which is ${\mathcal P}(\mathbb G) \times {\mathcal  B}(\mathbb R)$-measurable is called {\rm{$\mathbb G$-predictable process indexed by $\mathbb R$}}.
\end{definition}  

Letting \begin{equation} \label{fm}{\mathcal F}_t^m := \sigma \{m((0,s] \times A): 0 \leq s \leq t, A \in {\mathcal B}( \mathbb R)\},\end{equation}
we denote by $\mathbb F^m:=(\mathcal{F}^m_t)_{t\geq0}$ the filtration generated by the random measure $m(dt,dq)$.

\begin{definition}
\label{def:dualmeasure}
(Dual Predictable Projection of $m$). Given any filtration $\mathbb G$, such that $\mathbb F^m \subseteq  \mathbb G$, the {\rm{$\mathbb G$-dual predictable projection of $m$}}, denoted by $m^{p, \mathbb G}(dt, dq)$, is defined as the unique positive $\mathbb G$-predictable random measure such that for any nonnegative, $\mathbb G$-predictable process $\Phi$ indexed by $\mathbb R$
\begin{equation} 
\label{pro}
\E\bigg[\int_0^\infty \int_{\mathbb R} \Phi(s,q)   m(ds,dq)\bigg]  = \E\bigg[\int_0^\infty \int_{\mathbb R} \Phi(s,q) m^{p, \mathbb G}(dt, dq)\bigg].
\end{equation}
\end{definition}

To prove that a positive $\mathbb G$-predictable random measure provides the $\mathbb G$-dual predictable projection of $m$ it suffices to prove that equation \eqref{pro} holds true for any process of the form $\Phi(t,q) = C_t \mathds{1}_A(q)$ with $C$ a nonnegative $\mathbb G$-predictable process and $A \in {\mathcal  B}(\mathbb R)$. For further details we refer to \cite{Bremaud} and \cite{Jacod}.

We now aim at deriving an equation for the evolution of the filter (the \emph{filtering equation}). To this end we use the so-called \emph{innovation approach} (see \cite{Bremaud}, \cite{ColaneriCeci2012}, and \cite{LipShir}, among others), which, in our setting, requires the introduction of the $\mathbb H$-compensated jump measure of $\eta$ 
\begin{equation}
\label{mpi}
m^{\pi}(dt, dq):= m(dt, dq) - m^{p, \mathbb H}(dt, dq),
\end{equation}
where $m^{p, \mathbb H}(dt, dq)$ is the $\mathbb H$-dual predictable projection of $m$ (cf.\ Definition \ref{def:dualmeasure} above). The triplet $(I, I^1, m^{\pi})$ also represents the building block of the construction of $\mathbb H$-martingales, as it is shown in Proposition \ref{rappre} below. We start determining the form of $m^{p, \mathbb H}$. 

\begin{proposition}
\label{prop:dualpredictable}
The $\mathbb H$-dual predictable projection of $m$ is given by 
\begin{align} \label{dual}
 m^{p, \mathbb H}(dt, dq)= \sum_{i=1}^Q\pi_{t^-}(i) \lambda^N(i) \mathds{1}_{\{ c(\eta_{t^-}, i) \neq 0\}} \delta_{c(\eta_{t^-}, i)} (dq) dt,
\end{align}
where $\delta_{a}$ denotes the Dirac measure at point $a \in \mathbb{R}$.
\end{proposition}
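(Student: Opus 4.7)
My plan is to check that the random measure on the right-hand side of \eqref{dual} satisfies the characterizing identity \eqref{pro} when tested against the class of processes $\Phi(s,q)=C_s\mathds{1}_A(q)$, with $C$ nonnegative $\mathbb{H}$-predictable and $A\in\mathcal{B}(\mathbb{R})$; by the remark following Definition \ref{def:dualmeasure}, this is enough, and uniqueness of the dual predictable projection will then yield \eqref{dual}. The argument splits naturally into a full-information compensation step, in which $N$ is compensated under $\mathbb{F}$, and an information-reduction step, in which the unobservable $Z$ is replaced by the filter.

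In the first step, I would use that $m$ charges only the jump times of $N$, with jump size $c(\eta_{s^-},Z_{s^-})$; from \eqref{measure-m} this rewrites $\int_0^\infty\!\int_{\mathbb{R}}\Phi(s,q)\,m(ds,dq)$ as $\int_0^\infty C_s\,\mathds{1}_A(c(\eta_{s^-},Z_{s^-}))\,\mathds{1}_{\{c(\eta_{s^-},Z_{s^-})\neq 0\}}\,dN_s$. Taking $\E$, exploiting that $\lambda^N(Z_{\cdot^-})$ is the $\mathbb{F}$-predictable intensity of $N$ and that the integrand is $\mathbb{F}$-predictable, compensation of $N$ combined with a decomposition on the state of $Z_{s^-}$ gives
\[
\E\!\left[\int_0^\infty\!\!\int_{\mathbb{R}}\Phi(s,q)\,m(ds,dq)\right]=\E\!\left[\int_0^\infty C_s\sum_{i=1}^Q\mathds{1}_{\{Z_{s^-}=i\}}\,\lambda^N(i)\,\mathds{1}_A(c(\eta_{s^-},i))\,\mathds{1}_{\{c(\eta_{s^-},i)\neq 0\}}\,ds\right].
\]

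In the second step, I would project onto $\mathbb{H}$. Since $C_s$ and $\eta_{s^-}$ are $\mathcal{H}_{s^-}$-measurable (by $\mathbb{H}$-predictability of $C$ and $\mathbb{H}$-adaptedness of $\eta$), only the factor $\mathds{1}_{\{Z_{s^-}=i\}}$ fails to be so. Fubini and the tower property therefore reduce the right-hand side to the integral $\int_0^\infty \E\bigl[C_s\sum_{i=1}^Q \E[\mathds{1}_{\{Z_{s^-}=i\}}\mid\mathcal{H}_{s^-}]\,\lambda^N(i)\,\mathds{1}_A(c(\eta_{s^-},i))\,\mathds{1}_{\{c(\eta_{s^-},i)\neq 0\}}\bigr]ds$. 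For every fixed $s$, the processes $Z$ and the filter $\pi$ are $\P$-a.s.\ continuous at $s$ (no fixed time of discontinuity, since $N$ has a bounded continuous-in-time intensity), so
\[
\E[\mathds{1}_{\{Z_{s^-}=i\}}\mid\mathcal{H}_{s^-}]=\E[\pi_s(i)\mid\mathcal{H}_{s^-}]=\pi_{s^-}(i)\quad\P\text{-a.s.}
\]
Substituting this back, and rewriting $\mathds{1}_A(c(\eta_{s^-},i))=\int_{\mathbb{R}}\mathds{1}_A(q)\,\delta_{c(\eta_{s^-},i)}(dq)$, produces precisely the integral of $\Phi$ against the random measure on the right-hand side of \eqref{dual}.

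The main obstacle is the conditional-expectation identification $\E[\mathds{1}_{\{Z_{s^-}=i\}}\mid\mathcal{H}_{s^-}]=\pi_{s^-}(i)$: although natural from the very definition of $\pi$, it hinges on the interplay between the càdlàg version of the filter and the general theory of predictable projections, and must be handled carefully in the mixed diffusion/jump observation setting. Once this is in place, the random measure appearing in \eqref{dual} is manifestly $\mathbb{H}$-predictable and nonnegative, so verification of \eqref{pro} on the test class above, together with uniqueness of the dual predictable projection, identifies it as $m^{p,\mathbb{H}}$.
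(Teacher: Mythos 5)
Your proposal is correct and follows essentially the same two--step route as the paper: first identify the $\mathbb{F}$-dual predictable projection of $m$ (by exploiting that $m$ charges only the jump times of $N$ with size $c(\eta_{s^-},Z_{s^-})$ and that $N$ has $\mathbb{F}$-intensity $\lambda^N(Z_{\cdot^-})$), and then pass from $\mathbb{F}$ to $\mathbb{H}$ by replacing $\mathds{1}_{\{Z_{s^-}=i\}}$ with the filter. The only difference is that the paper delegates the second step to a citation (Proposition 2.3 of \cite{Ceci2012}), whereas you spell out the Fubini/tower-property computation.

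One small simplification is worth pointing out. In the projection step you condition on $\mathcal{H}_{s^-}$ and invoke the identity $\E[\mathds{1}_{\{Z_{s^-}=i\}}\mid \mathcal{H}_{s^-}]=\pi_{s^-}(i)$, which is precisely the statement that $\pi_{\cdot^-}(i)$ is the $\mathbb{H}$-\emph{predictable} projection of $\mathds{1}_{\{Z_\cdot = i\}}$; as you note, this is standard but does require some care. You can bypass it entirely: under the outer $\int_0^\infty\!\cdot\,ds$ and Fubini, each fixed $s$ is a $ds$-null modification, and since $Z$, $\eta$ and $\pi$ have no fixed time of discontinuity you may replace all left limits by the values at time $s$ and condition on $\mathcal{H}_s$ rather than $\mathcal{H}_{s^-}$. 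Since $C_s$ and $\eta_s$ are $\mathcal{H}_s$-measurable, the tower property then uses only the defining relation $\pi_s(i)=\E[\mathds{1}_{\{Z_s=i\}}\mid\mathcal{H}_s]$, after which you revert to left limits (again a.s.\ for each fixed $s$). This makes the argument elementary and avoids the predictable-projection subtlety you flagged as the ``main obstacle.''
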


\begin{proof}
\emph{Step 1.} First, we prove that the $\mathbb F$-dual predictable projection of $m$ is given by
\begin{equation}
\label{mp}
m^{p, \mathbb F}(dt,dq) := \lambda^N(Z_{t^-}) \mathds{1}_{\{ c(\eta_{t^-}, Z_{t^-}) \neq 0\}}\delta_{c(\eta_{t^-},  Z_{t^-})} (dq) dt.
\end{equation}

Let $A\in {\mathcal  B}(\mathbb R)$ and introduce
\begin{equation}
\label{callN}
\mathcal{N}_t(A):= m((0,t] \times A) = \sum_{s\leq t} \mathds{1}_{\{\Delta \eta_s\in A \setminus \{0\}\}}, \qquad t \geq 0.
\end{equation}
$\mathcal{N}(A)$ is the point process counting the number of jumps of $\eta$ up to time $t$ with jumps' size in the set $A$.  
Since by \eqref{eta} one has that $\Delta \eta_s =  c(\eta_{s^-}, Z_{s^-}) \mathds{1}_{\{ c(\eta_{s^-}, Z_{s^-}) \neq 0\}} \Delta N_s$,  $\forall s\geq 0$, and $N$ is a point process with $\mathbb F$-predictable intensity given by  $\{\lambda^N(Z_{t^-})\}_{t\geq 0}$, we obtain that for each $C$ nonnegative $\mathbb F$-predictable process
\begin{align*}
& \E\bigg[\int_0^t \!\!C_s \, d\mathcal{N}_s(A) \bigg]  = \E\bigg[\int_0^t \!\!\! C_s\mathds{1}_{\{ c(\eta_{s^-}, Z_{s^-}) \in A \setminus \{0\}\}} dN_s\bigg] = \E\bigg[ \int_0^t \!\!\! C_s \mathds{1}_{\{ c(\eta_{s^-}, Z_{s^-}) \in A \setminus \{0\}\}} \lambda^N(Z_{s^-}) ds\bigg].
\end{align*}
That is, for any $A\in {\mathcal  B}(\mathbb R)$, we have that $\{\lambda^N(Z_{t^-})  \mathds{1}_{\{ c(\eta_{t^-}, Z_{t^-}) \in A \setminus \{0\}\}}\}_{t\geq 0}$
provides the $\mathbb F$-predictable intensity of the counting process $\mathcal{N}(A)$. Recalling \eqref{callN} and Definition \ref{def:dualmeasure}, this implies that $m^{p, \mathbb F}(dt,dq)$ given in \eqref{mp} coincides with the $\mathbb F$-dual predictable projection of $m$, since equation \eqref{pro} holds with the choice $\mathbb G = \mathbb F$ and $\Phi(t,q) = C_t \mathds{1}_A(q)$, with $C$ an arbitrary nonnegative $\mathbb F$-predictable process and $A\in {\mathcal  B}(\mathbb R)$. 
\vspace{0.10cm}

\emph{Step 2.} As in Proposition 2.3 in \cite{Ceci2012} we can now derive the $\mathbb H$-dual predictable projection of $m^{p, \mathbb F}$, denoted by $m^{p, \mathbb H}(dt, dq)$, by simply projecting $m^{p, \mathbb F}$ with respect to the observation flow $\mathbb H$. Precisely, we have that the $\mathbb H$-predictable intensity of the point process $\mathcal{N}(A)$, $\forall A\in {\mathcal  B}(\mathbb R)$, is given by 
$$\pi_{t^-} (\lambda^N(.)  \mathds{1}_{\{ c(\eta_{t^-}, .) \in A \setminus \{0\}\}}) = \sum_{i=1}^Q\pi_{t^-}(i) \lambda^N(i) \mathds{1}_{\{ c(\eta_{t^-}, i) \in A \setminus \{0\}\}}, \quad \forall A\in {\mathcal  B}(\mathbb R).$$
This implies that  $m^{p, \mathbb H}(dt, dq)$ is given by \eqref{dual}, since \eqref{pro} holds with the choice $\mathbb G = \mathbb H$, $\Phi(t,q) = C_t \mathds{1}_A(q)$, with $A\in {\mathcal  B}(\mathbb R)$ and $C$ an arbitrary nonnegative $\mathbb H$-predictable process.
\end{proof}
 
An essential tool to prove that the original problem under partial information is equivalent to the separated one is the characterization of the filter as the unique solution to the filtering equation (see \cite{CeciGerardi98} and \cite{ElKaroui}). In order to derive the filtering equation solved by $\pi$ we first give a representation theorem for $\mathbb H$-martingales. The proof of the following technical result is given in Appendix \ref{app:proofs}.

\begin{proposition}
\label{rappre}
Under Assumptions \ref{ass:eta}  and \ref{ass:Novikov}, every $\mathbb H$-local martingale $M$ admits the decomposition 
$$M_t = M_0 + \int_0^t \varphi_s dI_s + \int_0^t \psi_s dI^1_s + \int_0^t \int_{\mathbb R} w(s,q) m^{\pi}(ds, dq),$$
where $\varphi$ and $\psi$ are $\mathbb H$-adapted processes, and $w$ is an $\mathbb H$-predictable process indexed by $\mathbb R$ such that a.s.
$$\int_0^t \varphi^2_s ds < \infty, \quad \int_0^t \psi^2_s ds < \infty, \quad \int_0^t  \int_{\mathbb R} |w(s,q)| m^{p, \mathbb H}(dt, dq) < \infty, \quad t\geq 0.$$
\end{proposition}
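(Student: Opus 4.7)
My plan is to combine the innovations approach of \cite{Bremaud, ColaneriCeci2012} with Jacod's martingale representation theorem, adapted to our two-dimensional Brownian innovation paired with the jump measure $m^\pi$. By a standard localization argument, it suffices to prove the decomposition for bounded $\mathbb H$-martingales $M$; the integrability conditions on $\varphi$, $\psi$, $w$ for general local martingales then follow by patching the representations obtained on a reducing sequence of stopping times.

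As a first step, I would derive the $\mathbb H$-canonical semimartingale decomposition of the observation pair $(X^0, \eta)$. Substituting $dW_t = dI_t + \sigma^{-1}(\pi_t(\beta) - \beta(Z_t))\,dt$ into \eqref{Xzero} immediately gives
\begin{equation*}
dX^0_t = \pi_t(\beta)\, X^0_t\, dt + \sigma X^0_t\, dI_t.
\end{equation*}
For $\eta$, substituting additionally $dB_t = dI^1_t + (\pi_t(\alpha(\eta_t,\cdot)) - \alpha(\eta_t, Z_t))\,dt$ and decomposing $c(\eta_{t^-}, Z_{t^-})\,dN_t = \int_{\mathbb R} q\, m^\pi(dt,dq) + \int_{\mathbb R} q\, m^{p,\mathbb H}(dt,dq)$ via Proposition \ref{prop:dualpredictable}, the algebraic identity $\sigma_2(q)\alpha(q,i) = b_1(q,i) - \sigma^{-1}\beta(i)\sigma_1(q)$ built into the definition \eqref{alfa} of $\alpha$ cancels all $Z$-dependent drift contributions; what remains is an $\mathbb H$-predictable drift $\widehat b(\eta_t, \pi_{t^-})$ together with a martingale part $\sigma_1(\eta_t)\,dI_t + \sigma_2(\eta_t)\,dI^1_t + \int_{\mathbb R} q\, m^\pi(dt,dq)$.

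Second, I would identify $\mathbb H = \mathbb F^{X^0} \vee \mathbb F^\eta$ with the filtration generated by the triplet $(I, I^1, m)$, a standard consequence of the two SDEs just obtained together with the strong uniqueness granted by Assumption \ref{ass:eta}. To obtain the predictable representation property for $(I, I^1, m^\pi)$, I would then apply Jacod's criterion (see Theorem III.4.29 in Jacod--Shiryaev, or the analogous arguments in \cite{CeciGerardi00}) to the augmented system $(X^0, \eta, \pi)$ viewed as an $\mathbb H$-semimartingale. Its $\mathbb H$-characteristics form a closed set of predictable functionals of the state itself (the drift terms above for $(X^0, \eta)$ and the Kushner--Stratonovich dynamics for $\pi$ to be derived next in the paper), so uniqueness in law for the associated martingale problem yields, by the Jacod criterion, the predictable representation property for the triplet of driving $\mathbb H$-martingales $(I, I^1, m^\pi)$, which is exactly the claim.

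The principal obstacle is that the compensator $m^{p,\mathbb H}$ is random and filter-dependent (cf.\ \eqref{dual}), so one cannot invoke the off-the-shelf representation theorem valid when $m$ is an independent Poisson random measure with deterministic compensator. What rescues Jacod's criterion here is the well-posedness of the closed system $(X^0, \eta, \pi)$ driven by $(I, I^1, m^\pi)$ under Assumptions \ref{ass:eta} and \ref{ass:Novikov}, equivalently the strong uniqueness of the Kushner--Stratonovich equation for $\pi$ to be established shortly after this proposition in the paper. Once that uniqueness is in hand, the existence of the processes $\varphi$, $\psi$, $w$ and the stated pathwise integrability conditions follow in standard fashion from the general structure theorem for $\mathbb H$-semimartingales applied to $M$, which completes the plan.
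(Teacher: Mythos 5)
Your proposal takes a genuinely different route from the paper, but it contains a circularity that prevents it from going through as written.

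The paper's proof proceeds by a change of measure: it defines the exponential $\mathbb F$-martingale $L$ solving $dL_t=-L_t\{\sigma^{-1}\beta(Z_t)dW_t+\alpha(\eta_t,Z_t)dB_t\}$, introduces $\Q$ with $\frac{d\Q}{d\P}\big|_{\mathcal F_t}=L_t$, and exploits Girsanov's theorem so that under $\Q$ the pair $(X^0,\eta)$ solves SDEs driven by $(\widetilde W,\widetilde B, m)$ whose coefficients depend on $(X^0,\eta)$ \emph{only} — no filter enters. Strong uniqueness of these decoupled SDEs (which Assumption \ref{ass:eta} alone supplies) then gives the filtration identity $\mathbb F^{\widetilde W}\vee\mathbb F^{\widetilde B}\vee\mathbb F^m=\mathbb H$ under $\Q$, Corollary III.4.3.1 of Jacod--Shiryaev gives the representation under $\Q$, and the product formula with $\widetilde L_t=\E[L_t\,|\,\mathcal H_t]$ transports it back to $\P$.

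Your plan instead works directly under $\P$ and claims the filtration identity $\mathbb H=\mathbb F^{I}\vee\mathbb F^{I^1}\vee\mathbb F^m$ as "a standard consequence" of \eqref{Xzero1}--\eqref{eta1} and Assumption \ref{ass:eta}, then invokes Jacod's criterion for the augmented system $(X^0,\eta,\pi)$. This is where the gap lies. Under $\P$, the innovation SDEs \eqref{Xzero1}--\eqref{eta1} have drift coefficients $\pi_t(\beta)$ and $\pi_t(b_1(\eta_t,\cdot))$ that depend on the filter, so the system is only closed once the Kushner--Stratonovich equation \eqref{KS} is adjoined and its strong uniqueness (Proposition \ref{uniq}) is in hand. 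But the derivation of the KS equation (Theorem \ref{filteringeq}) itself uses Proposition \ref{rappre} to obtain the integrands $\psi(i),\varphi(i),w_i$ in the martingale decomposition of $\pi(i)$ — precisely the statement you are trying to prove. You acknowledge needing "the strong uniqueness of the Kushner--Stratonovich equation ... to be established shortly after this proposition," but that uniqueness cannot be assumed here without circularity. Related to this, the identity $\mathbb H=\mathbb F^I\vee\mathbb F^{I^1}\vee\mathbb F^m$ is itself non-trivial under $\P$; the paper explicitly warns that the inclusion $\mathbb F^I\vee\mathbb F^{I^1}\vee\mathbb F^m\subseteq\mathbb H$ "could be strict," and the whole point of the Girsanov step is to obtain an analogous identity in a setting where it \emph{does} hold by elementary SDE arguments. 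To repair your approach along its own lines you would have to prove well-posedness of the coupled $(X^0,\eta,\pi)$ system without presupposing the KS equation — which is essentially what the measure change circumvents.
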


We are now in the position to prove the following fundamental result, whose proof is postponed to Appendix \ref{app:proofs}.

\begin{theorem}
\label{filtering:eq}
\label{filteringeq}
Recall \eqref{mpi}, let ${\underline y} \in \mathcal{Y} $ be the initial distribution of $Z$, and let Assumptions \ref{ass:eta} and \ref{ass:Novikov} hold. Then the filter ${\underline \pi}_t := (\pi_t(i); i\in S)_{t \geq 0}$ solves the Kushner-Stratonovich system
\begin{eqnarray}
\label{KS}
\pi_t(i) &=& y_i + \int_0^t \sum_{j=1}^Q \lambda_{ji} \pi_s(j) ds + \int_0^t \pi_s(i) \sigma^{-1}  \Big\{ \beta_i -  \sum_{j=1}^Q  \beta_j \pi_s(j) \Big\} dI_s \nonumber \\
& & + \int_0^t  \pi_s(i)\Big\{ \alpha(\eta_s, i) -  \sum_{j=1}^Q  \alpha(\eta_s, j) \pi_s(j) \Big\} dI^1_s  + \int_0^t \int_{\mathbb R} \big( w^\pi_i(s,q) - \pi_{s^-}(i)\big) m^\pi(ds, dq),
\end{eqnarray}
for any $i\in S$. Here, $\beta_i = r -g(i)$ and 
\begin{equation}
\label{salto}
\displaystyle w^\pi_i(s,q):=\frac{ d \lambda^N(i) \pi_{s^-}(i) \mathds{1}_{\{ c(\eta_{s^-}, i) \neq 0\}} \delta_{c(\eta_{s^-}, i)}(dq)} {d\Big[ \sum_{j=1}^Q\pi_{s^-}(j) \lambda^N(j) \mathds{1}_{\{ c(\eta_{s^-}, j) \neq 0\}} \delta_{c(\eta_{s^-}, j)}(dq) \Big]}
\end{equation}
denotes the Radon-Nikodym derivative of the measure $\lambda^N(i)\pi_{s^-}(i) \mathds{1}_{\{ c(\eta_{s^-}, i) \neq 0\}} \delta_{c(\eta_{s^-}, i)}(dq) $ with respect to $\sum_{j=1}^Q\pi_{s^-}(j) \lambda^N(j) \mathds{1}_{\{ c(\eta_{s^-}, j) \neq 0\}} \delta_{c(\eta_{s^-}, j)}(dq)$.
\end{theorem}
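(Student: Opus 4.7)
The plan is to apply the innovation method. Since $Z$ is a finite-state continuous-time Markov chain with generator $(\lambda_{jk})$, for each fixed $i \in S$ the bounded function $f_i(j) = \mathds{1}_{\{j=i\}}$ satisfies $(Lf_i)(j) = \lambda_{ji}$, and Dynkin's formula yields the $\mathbb{F}$-semimartingale decomposition $f_i(Z_t) = f_i(Z_0) + \int_0^t \lambda_{Z_s, i}\, ds + M^i_t$ with $M^i$ an $\mathbb{F}$-martingale. Taking $\mathcal{H}_t$-conditional expectations, applying Fubini's theorem and the standard filtering lemma (see, e.g., \cite{LipShir}) yields
\[
\pi_t(i) = y_i + \int_0^t \sum_{j=1}^Q \lambda_{ji}\, \pi_s(j)\, ds + N^i_t,
\]
where $N^i$ is an $\mathbb{H}$-martingale. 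This already accounts for the first two (finite variation) terms of \eqref{KS}.

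The core of the proof is the identification of $N^i$ through Proposition \ref{rappre}, which yields $\mathbb{H}$-predictable integrands $\varphi^i$, $\psi^i$ and $w^i(\,\cdot\,,q)$ such that $N^i_t = \int_0^t \varphi^i_s\, dI_s + \int_0^t \psi^i_s\, dI^1_s + \int_0^t\!\int_{\mathbb{R}} w^i(s,q)\, m^\pi(ds,dq)$. To determine $\varphi^i$ and $\psi^i$, I first rewrite the observations in terms of the innovations: substituting $dW = dI + \sigma^{-1}(\pi_s(\beta) - \beta(Z_s))\, ds$ and $dB = dI^1 + (\pi_s(\alpha(\eta_s,\cdot)) - \alpha(\eta_s, Z_s))\, ds$ into \eqref{Xzero} and \eqref{eta}, and exploiting the definition \eqref{alfa} of $\alpha$, the unobservable drifts involving $\beta(Z)$ and $b_1(\eta, Z)$ cancel, producing the $\mathbb{H}$-semimartingale decompositions
\[
d\log X^0_t = \bigl(\pi_t(\beta) - \tfrac{\sigma^2}{2}\bigr) dt + \sigma\, dI_t, \qquad d\eta_t = \pi_t(b_1(\eta_t,\cdot))\, dt + \sigma_1(\eta_t)\, dI_t + \sigma_2(\eta_t)\, dI^1_t + \!\!\int_{\mathbb{R}} q\, m(dt,dq).
\]
Computing the $\mathbb{H}$-predictable brackets $\langle \pi(f_i), \log X^0\rangle$ and $\langle \pi(f_i), \eta^c\rangle$ (with $\eta^c$ the continuous martingale part of $\eta$) in two ways — from the $\mathbb{H}$-representation of $N^i$ on the one hand, and from the $\mathbb{F}$-Itô product expansions of $f_i(Z)\log X^0$ and $f_i(Z)\eta^c$ projected back onto $\mathbb{H}$ via the filtering lemma on the other, using crucially that $[f_i(Z), W] = [f_i(Z), B] = 0$ since $Z$ is pure jump while $W, B$ are continuous and independent of $Z$ — yields the gain formulas $\varphi^i_s = \sigma^{-1}\pi_s(i)\bigl(\beta_i - \sum_j \beta_j \pi_s(j)\bigr)$ and $\psi^i_s = \pi_s(i)\bigl(\alpha(\eta_s, i) - \sum_j \alpha(\eta_s, j) \pi_s(j)\bigr)$.

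The jump gain $w^i$ is identified by a Bayes/Jacod-type projection. As in the proof of Proposition \ref{prop:dualpredictable}, the $\mathbb{F}$-dual predictable projection of $f_i(Z_{s-})\, m(ds,dq)$ is $\lambda^N(i)\mathds{1}_{\{c(\eta_{s-}, i)\neq 0\}}\delta_{c(\eta_{s-}, i)}(dq)\mathds{1}_{\{Z_{s-} = i\}}\, ds$; projecting further onto $\mathbb{H}$-predictables and comparing with $m^{p,\mathbb{H}}$ from \eqref{dual} via a Radon-Nikodym factorization gives the conditional jump gain $w^i(s,q) = w^\pi_i(s,q) - \pi_{s-}(i)$, with $w^\pi_i$ as in \eqref{salto}. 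Substituting $\varphi^i$, $\psi^i$ and $w^i$ into the representation of $N^i$ produces exactly \eqref{KS}. I expect the main obstacle to be the rigorous handling of the jump coefficient: since $m^{p,\mathbb{H}}$ is concentrated on the random, possibly coinciding atoms $\{c(\eta_{s-},j)\}_{j\in S}$, the Radon-Nikodym derivative \eqref{salto} must be defined with care and the admissibility of $w^\pi_i - \pi_{s-}(i)$ as an integrand in the class of Proposition \ref{rappre} must be verified. A secondary technical subtlety is that the shared Brownian motion $W$ couples the two observation processes $X^0$ and $\eta$, which is precisely what forces the auxiliary change of coefficient \eqref{alfa} that decouples them into the two independent $\mathbb{H}$-innovation Brownian motions $I$ and $I^1$.
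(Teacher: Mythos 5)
Your proposal is correct and follows essentially the same path as the paper's proof: the Dynkin decomposition of $f_i(Z)$ and projection onto $\mathbb{H}$ for the finite-variation part, the martingale representation of Proposition \ref{rappre} for the innovations decomposition, and a Jacod-type dual-projection/Radon--Nikodym argument for the jump gain $w^\pi_i$. The only cosmetic difference is that you identify the diffusive gains through the $\mathbb{H}$-predictable brackets with $\log X^0$ and the continuous martingale part of $\eta$, while the paper tests against the $\mathbb{H}$-adapted auxiliary Brownian motions $\widetilde{W},\widetilde{B}$ arising in the proof of Proposition \ref{rappre}; the two computations are equivalent.
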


Let us introduce the sequence of jump times and jump sizes of the process $\eta$, denoted by $\{T_n, \zeta_n \}_{n\geq 1}$, and recursively defined as $T_1 := \inf \{ t > 0:  \int_0^t  c(\eta_{s^-}, Z_{s^-} )dN_s \neq 0 \}$, 
$$T_{n+1} := \inf \big\{ t > T_{n}:  \int_{T_{n}}^t  c(\eta_{s^-}, Z_{s^-} )dN_s \neq 0 \big\}, \quad \zeta_n :=  \eta_{T_n} - \eta_{T_n-} =  c(\eta_{T_n^-}, Z_{T_n^-} ), \quad n\geq 1.$$
In the definitions above we use the standard convention that $\inf \emptyset = +\infty$.

Then the integer-valued measure associated to the jumps of $\eta$ (cf.\ \eqref{measure-m}) can also be written as 
\begin{equation} 
\label{measure-m1}
m(dt, dq) = \sum_{n \geq 1} \delta_{( T_n,  \zeta_n  )}(ds, dq) \mathds{1}_{\{ T_n < +\infty\}} .
\end{equation}
The filtering system of equations \eqref{KS} has a natural recursive structure in terms of the sequence $\{ T_n \}_{n\geq 1}$, as it is shown in the next proposition.

\begin{proposition}\label{recursive}
Between two consecutive jump times,  $t \in [T_n, T_{n+1})$,  the filtering system  of equations \eqref{KS} reads as 
\begin{eqnarray} \label{KSrec}
\pi_t(i) &=& \pi_{T_n}(i) + \int_{T_n}^t  \Big\{ \sum_{j=1}^Q \lambda_{ji} \pi_s(j) -  \pi_s(i) \Big[ \lambda^N(i) \mathds{1}_{\{ c(\eta_{s^-}, i) \neq 0\}}  - \sum_{j=1}^Q   \lambda^N(j)\pi_s(j) \mathds{1}_{\{ c(\eta_{s^-}, j) \neq 0\}} \Big]  \Big\} ds \nonumber \\
& & + \int_{T_n}^t \sigma^{-1}  \pi_s(i) \Big\{ \beta_i  -  \sum_{j=1}^Q  \beta_j \pi_s(j) \Big\} dI_s + \int_{T_n}^t  \pi_s(i)\Big\{ \alpha(\eta_s, i)  -  \sum_{j=1}^Q  \alpha(\eta_s, j) \pi_s(j) \Big\} dI^1_s,  \nonumber \\
\end{eqnarray}
for any $i\in S$. At a jump time of $\eta$, say $T_n$, ${\underline \pi}_t = (\pi_t(i); i\in S)_{t \geq 0}$ jumps as well,  and its value is given by
\begin{equation} 
\label{jump}\pi_{T_n}(i) = \frac{\lambda^N(i) \pi_{T_n^-}(i) \mathds{1}_{\{ \zeta_n = c(\eta_{T_n^-}, i) \}} } { \sum_{j=1}^Q\lambda^N(j) \pi_{T_n^-}(j) \mathds{1}_{\{ \zeta_n = c(\eta_{T_n^-}, j) \}} }, \quad  i \in S.
\end{equation}
\end{proposition}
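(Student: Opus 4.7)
The plan is to start from the Kushner--Stratonovich system \eqref{KS} and split the compensated jump-measure integral as $m^\pi = m - m^{p,\mathbb H}$, treating separately the inter-jump intervals and the jump epochs $T_n$.

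For $t\in[T_n,T_{n+1})$, by definition of $\{T_n\}_{n\geq 1}$ the random measure $m$ has no atoms on $(T_n,t]$, so the $m$-part of the compensated integral vanishes and only the contribution from the dual predictable projection $m^{p,\mathbb H}$ survives. Using the explicit form \eqref{dual} of $m^{p,\mathbb H}$ and the Radon--Nikodym expression \eqref{salto} for $w_i^\pi$, the key observation is that the numerator measure in $w_i^\pi$ is one of the summands composing $m^{p,\mathbb H}$; therefore
\[
\int_{T_n}^t\!\!\int_{\mathbb R} w_i^\pi(s,q)\,m^{p,\mathbb H}(ds,dq) = \int_{T_n}^t \lambda^N(i)\pi_{s^-}(i)\mathds 1_{\{c(\eta_{s^-},i)\neq 0\}}\,ds,
\]
while
\[
\int_{T_n}^t\!\!\int_{\mathbb R} \pi_{s^-}(i)\,m^{p,\mathbb H}(ds,dq) = \int_{T_n}^t \pi_{s^-}(i)\sum_{j=1}^Q\pi_{s^-}(j)\lambda^N(j)\mathds 1_{\{c(\eta_{s^-},j)\neq 0\}}\,ds.
\]
Subtracting, combining with the drift, diffusion and chain-transition terms of \eqref{KS} on the interval $(T_n,t]$, and noting that $\pi_{s^-}(i)=\pi_s(i)$ for $s\in(T_n,T_{n+1})$ (no jumps of $\pi$ in that interval, as will be re-confirmed below), produces exactly equation \eqref{KSrec}.

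At a jump time $T_n$ of $\eta$, all the Lebesgue-type integrals in \eqref{KS} are continuous in $t$ (the compensator $m^{p,\mathbb H}$ being absolutely continuous with respect to $dt$), so the only discontinuity comes from $\int_0^t\!\int_{\mathbb R} (w_i^\pi(s,q)-\pi_{s^-}(i))\,m(ds,dq)$. Using \eqref{measure-m1}, this integral jumps by $w_i^\pi(T_n,\zeta_n)-\pi_{T_n^-}(i)$, hence $\pi_{T_n}(i) = w_i^\pi(T_n,\zeta_n)$. It remains to evaluate $w_i^\pi(T_n,\zeta_n)$ explicitly: since both measures appearing in \eqref{salto} are purely atomic, concentrated on the (at most $Q$) points $\{c(\eta_{T_n^-},j)\}_{j=1}^Q$, the Radon--Nikodym derivative at the atom $q=\zeta_n$ reduces to the ratio of the point masses at $\zeta_n$; since by construction $\zeta_n\neq 0$, the indicator $\mathds 1_{\{c(\eta_{T_n^-},j)\neq 0\}}$ can be absorbed into $\mathds 1_{\{\zeta_n=c(\eta_{T_n^-},j)\}}$, yielding \eqref{jump}.

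The computation is essentially routine once one recognizes the algebraic cancellation between $w_i^\pi$ and the summands of $m^{p,\mathbb H}$; the only delicate point is the manipulation of the Radon--Nikodym derivative between two discrete measures in \eqref{salto}, and in particular making sure that the denominator in \eqref{jump} is strictly positive whenever $T_n<\infty$ (which follows from the fact that $\zeta_n$ must coincide with $c(\eta_{T_n^-},j)$ for at least one index $j$ with $\pi_{T_n^-}(j)>0$, by definition of the jump).
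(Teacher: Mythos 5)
Your proof is correct and follows essentially the same route as the paper: decompose $m^\pi = m - m^{p,\mathbb H}$, observe that on $(T_n,T_{n+1})$ only the absolutely continuous part $m^{p,\mathbb H}$ contributes, use the Radon--Nikodym cancellation $\int_{\mathbb R} w_i^\pi(s,q)\,m^{p,\mathbb H}(ds,dq) = \lambda^N(i)\pi_{s^-}(i)\mathds 1_{\{c(\eta_{s^-},i)\neq 0\}}\,ds$ to obtain \eqref{KSrec}, and read off the jump $\pi_{T_n}(i)=w_i^\pi(T_n,\zeta_n)$ to get \eqref{jump}. The additional remark on strict positivity of the denominator in \eqref{jump} is a small but worthwhile supplement that the paper leaves implicit.
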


\begin{proof}
First, recalling that $m^{\pi}(dt, dq)= m(dt, dq) - m^{p, \mathbb H}(dt, dq),$
and that
$$m^{p, \mathbb H}(dt, dq) = \sum_{j=1}^Q\pi_{t^-}(j) \lambda^N(j) \mathds{1}_{\{ c(\eta_{t^-}, j) \neq 0\}} \delta_{c(\eta_{t^-}, j)}(dq) dt,$$
we obtain that 
$$
\int_0^t \int_{\mathbb R} \big( w^\pi_i(s,q) - \pi_{s^-}(i)\big) m^{p, \mathbb H}(ds, dq) =  \int_0^t \pi_s(i) \Big[ \lambda^N(i) \mathds{1}_{\{ c(\eta_{s^-}, i) \neq 0\}}  - \sum_{j=1}^Q   \lambda^N(j)\pi_s(j) \mathds{1}_{\{ c(\eta_{s^-}, j) \neq 0\}} \Big]  ds,
$$ 
which, from \eqref{KS}, implies that for any $t \in [T_n, T_{n+1})$, $\pi_t(i)$ solves equation \eqref{KSrec}.

Finally, equation \eqref{jump} follows by \eqref{salto} and
$$
\pi_{T_n}(i) = w^\pi_i(T_n,\zeta_n)  = \frac{  \lambda^N(i) \pi_{T_n^-}(i) \mathds{1}_{\{ c(\eta_{T_n^-}, i) \neq 0\}} \delta_{c(\eta_{T_n^-}, i)}(\zeta_n)} { \sum_{j=1}^Q\pi_{T_n^-}(j) \lambda^N(j) \mathds{1}_{\{ c(\eta_{T_n^-}, j) \neq 0\}} \delta_{c(\eta_{T_n^-}, j)}(\zeta_n) }.
$$
\end{proof}

We want to stress that equation \eqref{jump} shows that the vector ${\underline \pi}_{T_n}$ is completely determined by the observed data $\eta$ and by the knowledge of ${\underline \pi}_{t}$ for $t \in [T_{n-1}, T_n)$, since $\pi_{T_n^-}(i) := \lim_{t  \uparrow T_n} \pi_{t}(i)$, $i \in S$. 

\begin{remark}
\label{rem:particularcases}
A few comments on the filtering equation are worth being done.
\begin{enumerate}
\item In the case $c(q,i) \equiv c \neq 0$, for any $i \in S$ and $q \in  \I$, the sequences of jump times of $\eta$ and $N$ coincide, and the filtering system of equations \eqref{KS} reduces to the simpler 
\begin{align*}
& \pi_t(i) = y_i+ \int_0^t \sum_{j=1}^Q \lambda_{ji} \pi_s(j) ds + \int_0^t \pi_s(i)\sigma^{-1}  \Big\{ \beta_i   -   \sum_{j=1}^Q  \beta_j \pi_s(j) \Big\} dI_s \nonumber \\
& + \int_0^t \pi_s(i)\Big\{ \alpha(\eta_s, i)  -  \sum_{j=1}^Q  \alpha(\eta_s, j) \pi_s(j) \Big\} dI^1_s  \nonumber \\
& + \int_0^t \left [ \frac{\lambda^N(i) \pi_{s^-}(i)}{\sum_{j=1}^Q\pi_{s^-}(j) \lambda^N(j) } - \pi_{s^-}(i) \right ] \Big( dN_s - \sum_{j=1}^Q\pi_{s^-}(j) \lambda^N(j) ds\Big), \quad i \in S.
\end{align*}

\item In the case $\alpha(q,i)=\alpha(i)$ and $c(q,i) \equiv 0$, for any $i \in S$ and $q \in  \I$, the filtering system of equations \eqref{KS} does not depend anymore explicitly on the process $\eta$. In particular, one has 
\begin{align}
\label{filteringetaDBM}
& \pi_t(i) = y_i + \int_0^t \sum_{j=1}^Q \lambda_{ji} \pi_s(j) ds + \int_0^t  \pi_s(i)\sigma^{-1}  \Big\{ \beta_i - \sum_{j=1}^Q  \beta_j \pi_s(j) \Big\} dI_s \nonumber \\
& + \int_0^t \pi_s(i)\Big\{ \alpha_i - \sum_{j=1}^Q  \alpha_j \pi_s(j) \Big\} dI^1_s, \quad i \in S,
\end{align}
where we have set $\alpha_i := \sigma_2^{-1}  \big\{b_1(i)  - \sigma^{-1} \beta_i \sigma_1\big\}$. 
With reference to \eqref{eta} and \eqref{alfa}, this setting corresponds, e.g., to the purely diffusive arithmetic case $c(q,i) = 0$,  $b_1(q,i) = b_1(i)$ and $\sigma_1(q) = \sigma_1 >0$, $\sigma_2(q) = \sigma_2 >0$, for any $i \in S$ and $q \in  \I$, or to the purely diffusive geometric case $c(q,i) = 0$,  $b_1(q,i) = b_1(i)q$ and $\sigma_1(q) = \sigma_1 q$, $\sigma_2(q) = \sigma_2 q$, for any $i \in S$ and $q \in  \I$. In Section \ref{sec:casestudy} we will provide the explicit solution to the optimal debt reduction problem within this setting.
\end{enumerate}
\end{remark}

%%%%%%%%%%%%%%%%%%%%%%%%%%%%%%%%%%%%%%%%%%%%%%%%%%%%%%%%%%%%%%%%%%%%%%%%%%%%%%%%%%%%%%%%%%%%%%%%

\subsection{The Separated Problem}
\label{separated}

Thanks to the introduction of the filter, equations \eqref{Xzero}, \eqref{eta}, and \eqref{eq:dynX} can now be rewritten in terms of observable processes. In particular, we have that 
\begin{equation}
\label{Xzero1}
d X_t^{0} = \pi_t(\beta) X_t^{0} dt + \sigma X_t^{0} dI_t , \quad X^{0}_0=x>0,
\end{equation}
\begin{equation}
\label{eta1}
d \eta_t= \pi_t(b_1(\eta_t, \cdot)) dt + \sigma_1( \eta_t) dI_t + \sigma_2(\eta_t) dI^1_t + \int_\mathbb R \zeta  m(dt, d\zeta), \quad \eta_{0}=q \in  \I,
\end{equation}
and 
\begin{equation}
\label{eq:dynX1}
dX_t^{\nu} = \pi_t(\beta)  X_t^{\nu} dt + \sigma X_t^{\nu} dI_t - d \nu_t, \quad X^{\nu}_{0^-}=x >0. 
\end{equation}
\noindent Notice that, for any $\nu \in \mathcal M(x,\underline y,q)$, the process $X^{\nu}$ turns out to be $\mathbb H$-adapted, and depends on the vector ${\underline \pi}_t =(\pi_t(i); i\in S)_{t \geq 0}$, such that ${\underline \pi}_{0}= {\underline y} \in \mathcal{Y}$. 

\begin{definition}
\label{strong uniqueness}
(Strong Uniqueness). We say that a process $( { \underline {\widetilde  \pi}}_t, \widetilde \eta_t )_{t \geq 0}$ with values in $\mathcal{Y} \times \mathcal{I}$ is a strong solution to equations  \eqref{KS} and \eqref{eta1} if it satisfies pathwise those equations. We say that strong uniqueness for the system of equations  \eqref{KS} and \eqref{eta1} holds if, for any $( { \underline {\widetilde  \pi}}_t, \widetilde \eta_t )_{t \geq 0}$ strong solution to system \eqref{KS} and \eqref{eta1}, one has 
${\underline {\widetilde  \pi}}_t = {\underline \pi}_t$,  $ \widetilde \eta_t = \eta_t$, a.s.\ for all $t \geq 0$.
\end{definition}

\begin{proposition}
\label{uniq}
Let Assumptions \ref{ass:eta} and \ref{ass:Novikov} hold, and suppose that $\alpha(\cdot,i)$ is locally-Lipschitz for any $i\in S$, and there exists $M>0$ such that $|\alpha(q,i)|\leq M(1 + |q|)$, for any $q \in  \I$ and any $i\in S$. Then system \eqref{KS} and \eqref{eta1} admits a unique strong solution.
\end{proposition}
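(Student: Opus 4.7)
The plan is to exploit the recursive decomposition provided by Proposition \ref{recursive}: between two consecutive jump times $T_n$ and $T_{n+1}$ of $\eta$, the coupled system \eqref{KS}--\eqref{eta1} reduces to a pure-diffusion SDE system driven by the independent $\mathbb{H}$-Brownian motions $I$ and $I^1$, while at each $T_n$ the filter is updated deterministically via \eqref{jump} from the post-jump value of $\eta$. Accordingly, I would organize the proof into three steps: (i) strong existence and uniqueness between jumps; (ii) deterministic propagation through each jump; (iii) non-explosion of the sequence $\{T_n\}_{n\geq 1}$.

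For step (i), consider on the interval $[T_n, T_{n+1})$ the $(Q+1)$-dimensional SDE for $(\underline{\pi}_t, \eta_t)$ obtained by combining \eqref{KSrec} with the continuous part of \eqref{eta1}. I would verify that its drift and diffusion coefficients satisfy local Lipschitz and linear growth conditions in $(\underline{\pi}, \eta)$ on $\mathcal{Y} \times \mathcal{I}$. The filter component lives in the bounded simplex $\mathcal{Y}$, so every term polynomial in $\underline{\pi}$ (for instance, $\sum_j \lambda_{ji}\pi_s(j)$, $\pi_s(i)[\beta_i - \sum_j \beta_j \pi_s(j)]$, or $\pi_s(i)\lambda^N(i)\mathds{1}_{\{c(\eta_{s^-},i)\neq 0\}}$) is globally Lipschitz in $\underline{\pi}$ on $\mathcal{Y}$. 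Assumption \ref{ass:eta} handles $b_1(\eta,\cdot)$, $\sigma_1(\eta)$, $\sigma_2(\eta)$ and $c(\eta,\cdot)$, while the added hypothesis on $\alpha$ ensures that the terms $\pi_s(i)[\alpha(\eta_s,i) - \sum_j \alpha(\eta_s,j)\pi_s(j)]$ are locally Lipschitz in $(\underline{\pi},\eta)$ with linear growth in $\eta$ (the $\pi$-dependent factor being bounded by $1$). Standard results on strong existence and uniqueness for SDEs with locally-Lipschitz, linearly-growing coefficients (see, e.g., \cite{GS}) then yield a unique strong non-exploding solution on $[T_n, T_{n+1})$, and in particular provide the well-defined left limits $(\underline{\pi}_{T_{n+1}^-}, \eta_{T_{n+1}^-})$.

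For step (ii), at the stopping time $T_{n+1}$ the process $\eta$ jumps by the observed size $\zeta_{n+1}$, and formula \eqref{jump} expresses $\underline{\pi}_{T_{n+1}}$ as a measurable deterministic function of $(\underline{\pi}_{T_{n+1}^-}, \eta_{T_{n+1}^-}, \zeta_{n+1})$. Consequently, if two strong solutions agreed up to $T_{n+1}^-$, they must agree at $T_{n+1}$ as well, and the induction on $n$ may proceed.

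For step (iii), I would argue that the counting process associated with $m$ is non-explosive: the $\mathbb{H}$-predictable intensity extracted from Proposition \ref{prop:dualpredictable} is the bounded process $\sum_i \pi_{t^-}(i)\lambda^N(i)\mathds{1}_{\{c(\eta_{t^-},i)\neq 0\}}\leq \max_{i\in S}\lambda^N(i)$, hence $T_n \to \infty$ a.s. Gluing the solutions on the successive intervals $[T_n,T_{n+1})$ together with the deterministic updates at each $T_n$ thus yields a unique strong solution of \eqref{KS}--\eqref{eta1} on $[0,\infty)$. The main technical point, and the only place where the new hypotheses on $\alpha$ enter non-trivially, is verifying the local Lipschitz bound for the mixed terms $\pi_s(i)\alpha(\eta_s,j)$ in step (i); the remainder of the argument is structural and rests on the bounded geometry of the simplex and the bounded-intensity nature of the observed jumps.
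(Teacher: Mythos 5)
Your three-step plan (SDE uniqueness between jumps, deterministic update at jumps, non-explosion of $\{T_n\}$) is exactly the structural skeleton of the paper's argument, and the treatment of the boundedness of $\underline\pi$ on the simplex, of the hypotheses on $\alpha$, and of the bounded intensity are all correct. However, there is a genuine gap in Step (i) that the paper explicitly flags. You list $\pi_s(i)\lambda^N(i)\mathds{1}_{\{c(\eta_{s^-},i)\neq 0\}}$ among the terms that are ``globally Lipschitz in $\underline\pi$ on $\mathcal{Y}$'', which is true, but you never address its regularity in the variable $\eta$. The indicator $q\mapsto \mathds{1}_{\{c(q,i)\neq 0\}}$ is a discontinuous function of $q$ whenever $c(\cdot,i)$ takes both zero and nonzero values, and Assumption \ref{ass:eta} gives you local Lipschitz continuity of $c(\cdot,i)$, not of the indicator composed with it. So the drift
\begin{equation*}
b^{\pi}(\underline y , q, i)= \sum_{j=1}^Q \lambda_{ji}  y_j - y_i\Big[ \lambda^N(i) \mathds{1}_{\{ c( q, i) \neq 0\}} - \sum_{j=1}^Q \lambda^N(j) y_j \mathds{1}_{\{ c( q, j) \neq 0\}} \Big]
\end{equation*}
is in general \emph{not} locally Lipschitz in $q$, and the ``standard results'' you invoke do not apply.

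Your argument therefore proves uniqueness only in the two degenerate cases that the paper treats first: $c\equiv 0$ (the indicator is identically $0$, the drift simplifies and there are no jumps), and $c(q,i)\neq 0$ for all $(q,i)$ (the indicator is identically $1$). In the genuinely mixed case --- $N$ jumps at a time where $c(\eta_{t^-},Z_{t^-})=0$, so $\eta$ does not jump --- the recursive-Lipschitz argument breaks down. The paper handles this remaining case by an entirely different route, appealing to uniqueness for the filtered martingale problem associated with the infinitesimal generator of $(Z,X^0,\eta)$ in the spirit of Kurtz--Ocone \cite{KurtzOcone} (see also Theorem 3.3 and Appendix B of \cite{ColaneriCeci2012}). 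To make your proof complete you would either have to restrict the statement to the two regular cases, or supply this separate argument for the mixed case.
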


Notice that, under Assumption \ref{ass:eta}, the requirement on $\alpha$ of Proposition \ref{uniq} is verified, e.g., whenever $\sigma_2(q) \geq \kappa$, for some $\kappa>0$ and for any $q \in  \I$, or if $b_1/\sigma_2$ and $\sigma_1/\sigma_2$ are locally-Lipschitz on $q \in  \I$ and have sublinear growth.
The proof of Proposition \ref{uniq} is postponed to Appendix \ref{app:proofs}. As a byproduct, it also ensures strong uniqueness of the solution to \eqref{eq:dynX1}. In the following, when there will be the need to stress the dependence with respect to the initial value $x>0$, we shall denote the solution to \eqref{Xzero1} and \eqref{eq:dynX1} by $X^{x,0}$ and $X^{x,\nu}$, respectively. 

Since
$$\E\left[ \pi_t\big( h(X_t^{x,\nu}, \cdot) \big) \right] =  \E\left[\E\left[ h(X_t^{x,\nu}, Z_t ) |  \mathcal{H}_t \right] \right],$$
an application of Fubini-Tonelli's theorem allows to rewrite also the cost functional of \eqref{costfunctional} in terms of observable quantities as
\begin{equation}\label{c1}
\mathcal J_{x,\underline y,q}(\nu) = \E_{(x,\underline{y},q)}\left[\int_0^{\infty} e^{-\rho t} \pi_t( h(X_t^{\nu}, \cdot ) ) dt + \int_0^{\infty} e^{-\rho t} d \nu_t\right].
\end{equation}
Here $\E_{(x,\underline{y},q)}$ denotes the expectation conditioned on $X^{\nu}_{0^-}=x>0$, $\underline{\pi}_{0}=\underline{y} \in \mathcal{Y}$, and $\eta_{0}=q  \in  \I$. Notice that the latter expression does not depend anymore on the unobservable process $Z$, and this allows us to introduce a control problem with complete information, \emph{the separated problem}, in which the new state variable is given by the triplet $(X^{\nu}, \underline{\pi}, \eta)$. For this problem we rewrite the set $\mathcal{M}(x,{\underline y},q) $ in terms of the observable processes given by \eqref{KS}, \eqref{eta1} and \eqref{eq:dynX1}, and we denote by $\mathcal{A}(x,{\underline y},q) $ such a representation of the set $\mathcal{M}(x,{\underline y},q) $; that is,
\begin{eqnarray*}
\mathcal  A(x,{\underline y},q) &:=& \Big\{\nu:\Omega \times \mathbb R_+ \rightarrow \mathbb R_+ : {(\nu_t(\omega) := \nu (\omega,t))}_{t \ge 0} \ \textrm{is non decreasing, right-continuous,} \\
& &  \mathbb H\textrm{-adapted, such that} \ X_t^{x,\nu} \ge 0  \ \forall \ t \ge 0,\, \, X_{0^-}^{x,\nu} = x,\,\, \underline{\pi}_{0^-}= {\underline y}, \  \eta_{0}=q \ \textrm{a.s.}  \Big\},
\end{eqnarray*}
for every $x \in (0,\infty)$ initial value of $X^{x,\nu}$ defined in \eqref{eq:dynX1}, for any ${\underline y} \in \mathcal{Y}$ initial values of the process ${\underline \pi}_t = (\pi_t(i); i\in S)_{t \geq 0}$ solution to equation \eqref{KS}, and for any $q \in  \I$ initial value of $\eta$. In the following, we set $\nu_{0^-}=0$ a.s.\ for any $\nu \in \mathcal  A(x,{\underline y},q)$.

Given $\nu \in \mathcal A(x, {\underline y}, q)$, the triplet $\{ (X_t^{x,\nu}, {\underline \pi}_t, \eta_t) \}_{t \geq 0}$ solves \eqref{eq:dynX1}, \eqref{KS} and \eqref{eta1} and the jump measure associated to $\eta$ has $\mathbb H$-predictable dual projection given by equation \eqref{dual}. Hence, the process $\{ (X_t^{x,\nu}, {\underline \pi}_t, \eta_t) \}_{t \geq 0}$ is an $\mathbb H$-Markov process and we therefore define the Markovian separated problem as
\begin{equation*}
{\textbf{(P2)}}
\left\{
\begin{array}{l}
\displaystyle V(x, {\underline y}, q) := \inf_{\nu \in \mathcal A(x, {\underline y}, q)} \E_{(x,\underline{y}, q)}\bigg[\int_0^{\infty} e^{-\rho t} \pi_t( h(X_t^{\nu}, \cdot)) dt  + \int_0^{\infty} e^{-\rho t} d \nu_t \bigg] \quad \textrm{with} \\
\\
dX_t^{x,\nu} = \pi_t(\beta)  X_t^{x,\nu} dt + \sigma X_t^{x,\nu} dI_t - d \nu_t, \quad X^{x,\nu}_{0-}=x >0,  \\   \\ 
({\underline \pi}, \eta) \ \textrm{ solution to equations  \eqref{KS} and \eqref{eta1}.}
\end{array}
\right.
\end{equation*}
This is now a \emph{singular stochastic problem under complete information}, since all the processes involved are $\mathbb H$-adapted. 

The next proposition immediately follows from the previous construction of the separated problem, and from the strong uniqueness of the solutions to \eqref{KS}, \eqref{eta1}, and \eqref{eq:dynX1}.
\begin{proposition}
\label{equivalence}
Assume strong uniqueness for the system of equations \eqref{KS} and \eqref{eta1}, and let $(x,\underline{y},q) \in (0,\infty) \times \mathcal{Y} \times \I$ be the initial values of the process $(X,Z,\eta)$ in the problem under partial observation $\textbf{(P1)}$. Then 
\begin{equation}
\label{eq:equivalence}
V_{po}(x,\underline{y},q)=V(x,\underline{y},q).
\end{equation}

Moreover, $\nu^* \in \mathcal A(x,\underline{y},q)$ is an optimal control for the separated problem $\textbf{(P2)}$ if and only if $\nu^* \in \mathcal M(x,\underline{y},q)$ is an optimal control for the original problem under partial observation $\textbf{(P1)}$.
\end{proposition}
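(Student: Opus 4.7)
The plan is to show that, under the strong uniqueness of the filtering system \eqref{KS}--\eqref{eta1}, the admissibility classes $\mathcal{M}(x,\underline{y},q)$ and $\mathcal{A}(x,\underline{y},q)$ coincide and the cost functional takes the same value on each admissible $\nu$ in either formulation; the identity on values and the equivalence of optimizers then follow at once by taking infima.

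First I would establish that, for any $\nu \in \mathcal{M}(x,\underline{y},q)$, the triplet $(X^{x,\nu},\underline{\pi},\eta)$ solves the $\mathbb{H}$-adapted system appearing in $\textbf{(P2)}$. Starting from \eqref{eq:dynX}, I decompose the drift as
$$\beta(Z_t)X^{x,\nu}_t\,dt = \pi_t(\beta)X^{x,\nu}_t\,dt + \big(\beta(Z_t) - \pi_t(\beta)\big)X^{x,\nu}_t\,dt,$$
and combine the correction term with $\sigma X^{x,\nu}_t\,dW_t$; by the definition of the innovation in \eqref{Inn}, this sum equals $\sigma X^{x,\nu}_t\,dI_t$, giving \eqref{eq:dynX1}. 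An analogous manipulation of \eqref{eta}, together with the compensated jump measure $m^\pi$ from \eqref{mpi}, yields \eqref{eta1}, and $\underline{\pi}$ satisfies \eqref{KS} by Theorem \ref{filteringeq}. Strong uniqueness for \eqref{KS}--\eqref{eta1}, together with strong uniqueness for the linear SDE \eqref{eq:dynX1} (driven by the $\mathbb{H}$-Brownian motion $I$), forces $(X^{x,\nu},\underline{\pi},\eta)$ to be $\mathbb{H}$-adapted; in particular $\nu \in \mathcal{A}(x,\underline{y},q)$. The reverse inclusion is immediate upon inverting the decomposition above to recast \eqref{eq:dynX1}, \eqref{KS}, \eqref{eta1} in the $(W,B,N,Z)$-driven form, and hence $\mathcal{M}(x,\underline{y},q) = \mathcal{A}(x,\underline{y},q)$.

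Next, since $X^{x,\nu}$ is $\mathbb{H}$-adapted, the tower property of conditional expectation yields
$$\E\big[h(X^{x,\nu}_t,Z_t)\,\big|\,\mathcal{H}_t\big] = \pi_t\big(h(X^{x,\nu}_t,\,\cdot\,)\big),$$
and by Fubini--Tonelli (applicable thanks to Assumption \ref{ass:h})
$$\E_{(x,\underline{y},q)}\!\bigg[\int_0^\infty\! e^{-\rho t} h(X^{x,\nu}_t,Z_t)\,dt\bigg] = \E_{(x,\underline{y},q)}\!\bigg[\int_0^\infty\! e^{-\rho t}\,\pi_t\big(h(X^{x,\nu}_t,\,\cdot\,)\big)\,dt\bigg].$$
Since the intervention cost $\int_0^\infty e^{-\rho t}\,d\nu_t$ is unchanged by the reformulation, $\mathcal{J}_{x,\underline{y},q}(\nu)$ in \eqref{costfunctional} coincides with the cost expression in $\textbf{(P2)}$ (cf.\ \eqref{c1}) for every $\nu \in \mathcal{M}(x,\underline{y},q) = \mathcal{A}(x,\underline{y},q)$. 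Taking infima over this common class gives \eqref{eq:equivalence} and the if-and-only-if statement on optimal controls.

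The only genuine obstacle is the implicit use of the strong uniqueness hypothesis to ensure that, once the problem is rewritten in innovation form, the state triplet is genuinely $\mathbb{H}$-adapted: without this, an $\mathbb{H}$-adapted candidate $\nu$ admissible in one formulation might fail to produce an admissible trajectory in the other, so the identification of the two admissibility sets, and thus the reduction, would break down.
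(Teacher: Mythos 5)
Your proposal is correct and fills in, in the natural way, exactly what the paper leaves implicit (the paper states that the proposition ``immediately follows from the previous construction of the separated problem, and from the strong uniqueness''). You correctly observe that substituting the innovation decomposition $\sigma\,dW_t = \sigma\,dI_t + (\pi_t(\beta)-\beta(Z_t))\,dt$ into \eqref{eq:dynX} gives \eqref{eq:dynX1} pathwise, that the cost functionals agree via the tower property and Fubini--Tonelli (this is precisely \eqref{c1}), and that the infimum is therefore taken over the same class in both formulations. One small imprecision in your closing paragraph: strong uniqueness is not needed to make the filter $\mathbb{H}$-adapted (it is so by construction as a conditional expectation), but rather to guarantee that the SDE system \eqref{KS}--\eqref{eta1} defining the state of problem $\textbf{(P2)}$ does not admit spurious $\mathbb{H}$-adapted solutions distinct from the genuine filter; without this, the Markovian problem $\textbf{(P2)}$ would not be unambiguously tied to the filtering reformulation of $\textbf{(P1)}$, and the value functions could diverge. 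This is a minor point of emphasis; the structure and substance of your argument match the paper's.
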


\begin{remark}
\label{rem:no-eta}
Notice that in the setting of Remark \ref{rem:particularcases}-(2), the pair $(X^{x,\nu}, {\underline \pi})$ solving equations \eqref{eq:dynX1} and \eqref{KS}, respectively, is an $\mathbb H$-Markov process, for any $\nu \in \mathcal A(x,\underline{y},q)$, $(x,\underline{y},q) \in (0,\infty) \times \mathcal{Y} \times \I$. As a consequence, since the cost functional and the set of admissible controls do not depend explicitly on the process $\eta$, the value function of the separated problem $\textbf{(P2)}$ does not depend anymore on the variable $q$. We will consider this setting as a case study in Section \ref{sec:casestudy}. 
\end{remark}

%%%%%%%%%%%%%%%%%%%%%%%%%%%%%%%%%%%%%%%%%%%%%%%%%%%%%%%%%%%%%%%%%%%%%%%%%%%%%

\subsection{A Probabilistic Verification Theorem via Reduction to Optimal Stopping}
\label{sec:verificationthm}

In this section we relate the separated problem to a Markovian optimal stopping problem, and we show that the solution to the latter is directly related to the optimal control of the former. The following analysis is fully probabilistic and it is based on a change of variable formula for Lebesgue-Stieltjes integrals that has been already employed in singular control problems (see, e.g., \cite{BK} and \cite{Ferrari18}). The result of this section will then be employed in Section \ref{sec:casestudy} where, in a case study, we determine the expression of the optimal debt reduction policy by solving an auxiliary optimal stopping problem.

With regard to Problem \textbf{(P2)}, notice that $\pi_t\big(h(X_t^{x,\nu}, \cdot) \big) = \sum_{i=1}^Q \pi_t(i) h(X_t^{x,\nu}, i)$ a.s.\ for any $t\geq0$. Then, for any $(x,\underline{\pi}) \in (0,\infty) \times \mathcal{Y}$, set
\begin{equation}\label{hat_h}
\widehat{h}(x, \underline{\pi}):=\sum_{i=1}^Q \pi(i) h(x,i),
\end{equation}
and, given $z \in (0,\infty)$, we introduce the optimal stopping problem 
\begin{equation}
\label{valueOS-general}
\widetilde{U}_t(z):=\essinf_{\tau \geq t}\E\bigg[\int_t^{\tau} e^{-\rho (s-t)} X^{1,0}_s\, \widehat{h}_x(X^{z,0}_s, \underline{\pi}_s) ds + e^{-\rho (\tau - t)} X^{1,0}_\tau \,\Big|\,\mathcal{H}_t\bigg], \qquad t\geq 0,
\end{equation}
where the optimization is taken over all the $\mathbb{H}$-stopping times $\tau \geq t$. 

Under Assumption \ref{ass:h}, the expectation in \eqref{valueOS-general} is finite for any $\mathbb{H}$-stopping time $\tau \geq t$, for any $t\geq0$. To take care of the event $\{\tau = \infty\}$, in \eqref{valueOS-general} we make use of the convention 
\begin{equation}
\label{eq:convention}
e^{-\rho \tau} X^{1,0}_{\tau}:=\liminf_{t\uparrow \infty}e^{-\rho t} X^{1,0}_t \quad \text{on}\quad \{\tau = \infty\}.
\end{equation}

Denote by $U_t(z)$ a c\`adl\`ag modification of $\widetilde{U}_t(z)$, and  observe that  $0 \leq U_t(z) \leq X^{1,0}_t $, for any $t \geq 0$, a.s. Also, define the stopping time 
\begin{equation}
\label{taustargeneral}
\tau_t^*(z):=\inf\{s\geq t: U_s(z) \geq X^{1,0}_s \}, \quad z \in (0,\infty),
\end{equation}
with the convention that $\tau_t^*(z) = \infty$ if the set on the right-hand side is empty. Then by Theorem D.12 in Appendix D of \cite{KS2}, $\tau_t^*(z)$ is an optimal stopping time for problem \eqref{valueOS-general}. In particular, $\tau^*(z):= \tau_0^*(z)$ is optimal for the problem
\begin{equation}
\label{valueOS-general1}
U_0(z):=\inf_{\tau \geq 0}\E \left[ \int_0^{\tau} e^{-\rho t}  X^{1,0}_t\, \widehat{h}_x(X^{z,0}_t, \underline{\pi}_t) dt + e^{-\rho \tau} X^{1,0}_\tau \right]
\end{equation}

Notice that since $h_x(\cdot, \underline{\pi})$ is a.s.\ increasing, then $z \mapsto \tau^*(z)$ is a.s.\ decreasing. Such monotonicity of $\tau^*(\,\cdot\,)$ will be important in the following as we will need to consider its generalized inverse. Moreover, since  the triplet $(X^{z,0}_t, \underline{\pi}_t, \eta_t)$ is an homogenous $\mathbb{H}$-Markov process, there exists a measurable function $U: (0,\infty) \times \mathcal{Y} \times \I \to \mathbb{R}$ such that $U_t(z) = U (X^{z,0}_t, \underline{\pi}_t, \eta_t)$ for any $t \geq 0$, a.s. Hence, $U_0(z)=U(z,\underline{y},q)$, and for any $(x,\underline{y},q) \in (0,\infty) \times \mathcal{Y} \times \I$, define
\begin{equation}
\label{hatV}
\widetilde{V}(x, \underline{y},q):= \int_{0}^{x} U(z, \underline{y},q)dz.
\end{equation}

Moreover, introduce the nondecreasing, right-continuous process 
\begin{equation}
\label{candidate-general}
\overline{\nu}^*_t := \sup\{\alpha \in [0,x]: \tau^*(x-\alpha) \leq t\}, \quad t \geq 0, \qquad \overline{\nu}^*_{0^-} =0,
\end{equation}
and then also the process 
\begin{equation}
\label{nustar-general}
\nu^*_t:=\int_{0}^t X^{1,0}_s d\overline{\nu}^*_s, \quad t > 0, \qquad \nu^*_{0^-}=0. 
\end{equation}
\noindent Notice that $\overline{\nu}^*_{\cdot}$ is the right-continuous inverse of $\tau^*(\,\cdot\,)$.

\begin{theorem}
\label{teo:verifico}
Let $\widetilde{V}$ be as in \eqref{hatV} and $V$ as in the definition of Problem \textbf{(P2)}. Then one has $\widetilde{V} = V$, and $\nu^*$ is the (unique) optimal control for Problem \textbf{(P2)}.
\end{theorem}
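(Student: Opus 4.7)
The plan is to establish the two inequalities $\widetilde V \le V$ and $\widetilde V \ge V$ separately, each via a change-of-variables manipulation that turns the cost functional into an integral (over the initial-debt variable $z \in (0,x)$) of optimal-stopping-type expressions.

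For the lower bound $\widetilde V(x,\underline y,q) \le \mathcal J_{x,\underline y,q}(\nu)$, I would fix an arbitrary $\nu \in \mathcal A(x,\underline y,q)$ and introduce the auxiliary nondecreasing process $\overline\nu_t := \int_0^t (X^{1,0}_s)^{-1} d\nu_s$, so that by \eqref{soldynX} one has $X^{x,\nu}_t = X^{1,0}_t(x-\overline\nu_t)$ and $X^{z,0}_t = z X^{1,0}_t$. Since $h(0,i)=0$ and $h$ is $C^1$ in $x$, write $\pi_t(h(X^{x,\nu}_t,\cdot)) = \int_0^{X^{x,\nu}_t} \widehat h_x(z,\underline\pi_t)\,dz$; then change the variable of integration via $z = X^{1,0}_t(x-\alpha)$, with $\alpha\in[\overline\nu_t,x]$, to obtain
\[
\int_0^\infty e^{-\rho t}\pi_t\big(h(X^{x,\nu}_t,\cdot)\big)\,dt \;=\; \int_0^x \int_0^{\tau_\nu(x-z)} e^{-\rho t} X^{1,0}_t\,\widehat h_x(X^{z,0}_t,\underline\pi_t)\,dt\,dz,
\]
where $\tau_\nu(\alpha) := \inf\{t\ge 0 : \overline\nu_t \ge \alpha\}$ is an $\mathbb H$-stopping time. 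For the intervention part, the generalized-inverse identity between $\overline\nu$ and $\tau_\nu(\,\cdot\,)$ (used also in \cite{BK} and \cite{Ferrari18}) yields
\[
\int_0^\infty e^{-\rho t} d\nu_t \;=\; \int_0^\infty e^{-\rho t} X^{1,0}_t\,d\overline\nu_t \;=\; \int_0^x e^{-\rho \tau_\nu(x-z)} X^{1,0}_{\tau_\nu(x-z)}\,dz,
\]
under the convention \eqref{eq:convention}. Taking expectation, applying Fubini--Tonelli (justified by Assumption \ref{ass:h}(ii)), and using the suboptimality of $\tau_\nu(x-z)$ in problem \eqref{valueOS-general1} for each fixed $z$, I get $\mathcal J_{x,\underline y,q}(\nu) \ge \int_0^x U(z,\underline y,q)\,dz = \widetilde V(x,\underline y,q)$.

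For the upper bound I would repeat the exact same computations with $\nu = \nu^*$ defined in \eqref{nustar-general}, noticing that by construction $\overline\nu^*$ is precisely the right-continuous generalized inverse of the decreasing map $z \mapsto \tau^*(z)$ (equivalently, of the increasing map $\alpha \mapsto \tau^*(x-\alpha)$), so that $\tau_{\nu^*}(x-z) = \tau^*(z)$ for a.e.\ $z \in (0,x)$. Because $\tau^*(z)$ is optimal for \eqref{valueOS-general1}, each inequality in the chain becomes an equality, giving $\mathcal J_{x,\underline y,q}(\nu^*) = \widetilde V(x,\underline y,q)$. Admissibility of $\nu^*$ (i.e.\ $\nu^* \in \mathcal A(x,\underline y,q)$) must be checked separately: nondecreasingness and right-continuity follow from \eqref{candidate-general}, $\mathbb H$-adaptedness follows because $\tau^*(z)$ is an $\mathbb H$-stopping time for each $z$, and the nonnegativity constraint $X^{x,\nu^*}_t \ge 0$ is automatic from $\overline\nu^*_t \le x$.

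Combining the two inequalities gives $V=\widetilde V$ and shows $\nu^*$ is optimal. Uniqueness follows by inspecting when equality can hold in the lower-bound chain: for any other optimal $\hat\nu$ the stopping times $\tau_{\hat\nu}(x-z)$ must coincide $\mathbb P$-a.s.\ with $\tau^*(z)$ for $dz$-a.e.\ $z\in(0,x)$, which, by inverting this relation, forces $\overline{\hat\nu} = \overline{\nu}^*$ a.s., and hence $\hat\nu = \nu^*$ a.s. The main technical obstacle I anticipate is the rigorous justification of the change-of-variable identity relating $\int_0^\infty f(t)\,d\overline\nu_t$ to $\int_0^x f(\tau_\nu(\alpha))\,d\alpha$ (and its measurable version for stopped processes), especially at jumps of $\overline\nu$ and on the event $\{\tau_\nu(\alpha)=\infty\}$; this has to be handled carefully using the convention \eqref{eq:convention} and the integrability guaranteed by Assumption \ref{ass:h}(ii), so that all applications of Fubini's theorem are legitimate.
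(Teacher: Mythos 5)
Your argument for the identity $\widetilde V = V$ and for the optimality of $\nu^*$ follows essentially the same route as the paper's own proof, which defers the identical change-of-variable steps (involving the auxiliary process $\overline\nu$, its generalized inverse $\tau_\nu(\cdot)$, and Proposition 4.9 in Chapter 0 of \cite{RY}) to Steps 1--2 of Theorem 3.1 in \cite{Ferrari18}; you simply unwrap those references, which is perfectly fine.

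Where your proposal has a genuine gap is the uniqueness claim. Equality in your lower-bound chain only forces that, for $dz$-a.e.\ $z \in (0,x)$, the random time $\tau_{\hat\nu}(x-z)$ attains the infimum in \eqref{valueOS-general1}; it does \emph{not} force $\tau_{\hat\nu}(x-z) = \tau^*(z)$ $\mathbb{P}$-a.s. Recall that $\tau^*(z)$ of \eqref{taustargeneral} is merely the \emph{smallest} optimal stopping time for \eqref{valueOS-general1}; optimal stopping times need not be unique, and $\tau_{\hat\nu}(x-z)$ could be strictly larger and still optimal. Hence you cannot ``invert this relation'' to conclude $\overline{\hat\nu} = \overline\nu^*$ a.s.; that step is unjustified as written. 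The paper closes this gap much more cheaply by noting that $\nu \mapsto \mathcal J_{x,\underline y,q}(\nu)$ is strictly convex on the convex set of admissible controls with finite cost: $\nu \mapsto X^{x,\nu}$ is affine via \eqref{soldynX}, the intervention cost is linear in $\nu$, and $x \mapsto h(x,i)$ is strictly convex by Assumption \ref{ass:h}(i), so the minimizer --- which you have exhibited --- is unique. Replace your uniqueness step by this strict-convexity argument.
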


\begin{proof}
\emph{Step 1.} Let $x > 0$, $\underline{y} \in \mathcal{Y}$, and $q \in  \I$ be given and fixed. For $\nu \in \mathcal{A}(x, \underline y, q)$, we introduce the process $\overline{\nu}$ such that
$\overline{\nu}_t:= \int_{0}^t \frac{d{\nu}_s}{X^{1,0}_s}$, $t\geq 0$, and define its inverse (see, e.g., Chapter 0, Section 4 of \cite{RY}) by
\begin{equation}
\label{taunu}
\tau^{\overline{\nu}}(z):=\inf\{t\geq 0 \ | \ x - \overline{\nu}_t < z\}, \qquad 0 < z \leq x.
\end{equation}
Notice that the process $\tau^{\overline{\nu}}(z):=\{\tau^{\overline{\nu}}(z),\  z \leq x\}$ has decreasing, left-continuous sample paths, and hence it admits right-limits
\begin{equation}
\label{taunumeno}
\tau^{\overline{\nu}}_{+}(z):=\inf\{t\geq 0 \ | \ x - \overline{\nu}_t \leq z\}, \qquad z \leq x.
\end{equation}
Moreover, the set of points $z\in\mathbb{R}$ at which $\tau^{\overline{\nu}}(z)(\omega) \neq \tau^{\overline{\nu}}_{+}(z)(\omega)$ is a.s.\ countable for a.e.~$\omega\in\Omega$.

The random time $\tau^{\overline{\nu}}(z)$ is actually an $(\mathcal{H}_{t})$-stopping time because it is the entry time of an open set of the right-continuous process $\overline{\nu}$, and $(\mathcal{H}_t)_{t\ge0}$ is right-continuous. Moreover, since $\tau^{\overline{\nu}}_{+}(z)$ is the first entry time of the right-continuous process $\overline{\nu}$ into a closed set, it is an $(\mathcal{H}_{t})$-stopping time as well for any $z \leq x$.

Proceeding then as in Step 1 of the proof of Theorem 3.1 in \cite{Ferrari18}, by employing the change of variable formula in Chapter 0, Proposition 4.9 of \cite{RY}, one finds that
$$\widetilde{V}(x, \underline{y}, q) = \int_{0}^{x}U(z, \underline{y}, q)dz \leq \mathcal{J}_{x,\underline{y}, q}(\nu).$$
Hence, since $\nu$ was arbitrary, we find
\begin{equation}
\label{ineq1}
\widetilde{V}(x, \underline{y}, q) \leq V(x, \underline{y}, q), \quad (x,\underline{y},q) \in (0,\infty) \times \mathcal{Y} \times \I.
\end{equation}
\vspace{0.1cm}

\emph{Step 2.} To complete the proof we have to show the reverse inequality. Let $x \in (0,\infty)$, $\underline{y} \in \mathcal{Y}$, and $q \in  \I$, initial values of $X^{x,\nu}$,  $\underline{\pi}$ and $\eta$. We first notice that $\nu^* \in \mathcal{A}(x, \underline{y},q)$. Indeed, $\nu^*$ is nondecreasing, right-continuous and such that $X^{x,\nu^*}_t  = X^{1,0}_t(x - \overline{\nu}^*_t) \geq 0$ a.s.\ for all $t\geq 0$, since one has by definition $\overline{\nu}^*_t \leq x$ a.s. Moreover, for any $0<z \leq x$, we can write (cf.\ \eqref{candidate-general} and \eqref{taunumeno})
$$\tau^{\overline{\nu}^*}_+(z) \leq t \,\, \Longleftrightarrow \,\, \overline{\nu}^*_t \geq x - z\,\, \Longleftrightarrow \,\, \tau^*(z) \leq t.$$

Then, recalling that $\tau^{\overline{\nu}^{*}}_{+}(z)=\tau^{\overline{\nu}^{*}}(z)$ $\mathbb{P}$-a.s.~and for a.e.~$z\le x$, we pick $\nu=\nu^*$ (equivalently, $\overline{\nu}=\overline{\nu}^*$), and following Step 2 in the proof of Theorem 3.1 of \cite{Ferrari18}, we obtain $\widetilde{V}(x, \underline{y}, q)=\mathcal{J}_{x,\underline{y}, q}(\nu^*)$. That is, $\widetilde{V}=V$ by \eqref{ineq1} and admissibility of $\nu^*$. Therefore $\nu^*$ is optimal. In fact, $\nu^*$ is the unique optimal control in the class of controls belonging to $\mathcal{A}(x, \underline{y}, q)$ and such that $\mathcal{J}_{x,\underline{y}, q}(\nu) < \infty$ by strict convexity of $\mathcal{J}_{x,\underline{y}, q}(\,\cdot\,)$.
\end{proof}

\begin{remark}
\label{rem:OS-Markov-case}
For any given $(x,\underline{y},q) \in (0,\infty) \times \mathcal{Y} \times \I$, define the Markovian optimal stopping problem
\begin{equation*}
v(x, \underline{y}, q):= \inf_{\tau \geq 0}\E_{(x, \underline{y} , q)}\bigg[\int_0^{\tau} e^{-\rho t}   X^{x,0}_t \widehat{h}_x(  {X^{x,0}_t}, \underline{\pi}_t) dt + e^{-\rho \tau} X^{x,0}_\tau \bigg],
\end{equation*}
where $\E_{(x, \underline{y}, q)}$ denotes the expectation under the probability measure $\P_{(x,\underline{y},q)}$ such that $\P(\,\cdot\,):=\P(\,\cdot\,| X^{x,0}_0=x, \underline{\pi}_0=\underline{y}, \eta_0 = q)$. Then, it is readily verified that $v(x, \underline{y}, q)= x U(x, \underline{y}, q)$. Moreover, it holds that the stopping time
$$\tau^*(x):=\inf\{t\geq 0: v(X^{x,0}_t, \underline{\pi}_t , \eta_t) \geq X^{x,0}_t \}, \quad \P_{(x,\underline{y},q)} - \textrm{a.s}$$
is optimal for $v(x,\underline{y}, q)$.
\end{remark}

%%%%%%%%%%%%%%%%%%%%%%%%%%%%%%%%%%%%%%%%%%%%%%%%%%

\section{The Solution in a Case Study with $Q=2$ Economic Regimes}
\label{sec:casestudy}

In this section, we build on the general filtering analysis developed in the previous sections and on the result of Theorem \ref{teo:verifico}, and we provide the form of the optimal debt reduction policy in a case study that is defined through the following standing assumption.

\begin{assumption}
\label{ass:casestudy}
\begin{enumerate}\hspace{10cm}
\item $Z$ takes values in $S=\{1, 2\}$, and, with reference to \eqref{eq:dynX}, we take $g_2:=g(2)<g(1)=:g_1$;
\item for any $q\in \I$ and any $i\in \{1, 2\}$ one has $c(q,i) = 0$ and, for $\alpha$ as in \eqref{alfa}, we take $\alpha(q,i)=\alpha(i)$;
\item $h(x,i)=h(x)$ for all $(x,i) \in (0,\infty) \times \{1,2\}$, with $h:\mathbb{R} \to \mathbb{R}$ such that: 
\begin{itemize}
\item[(i)] $x \mapsto h(x)$ is strictly convex, twice-continuously differentiable, and nondecreasing on $\mathbb{R}_+$ with $h(0)=0$ and $\lim_{x \uparrow \infty}h(x)=\infty$; 
\item[(ii)] there exist $\gamma > 1$, $0<K_o<K$ and $K_1,K_2>0$ such that 
$$K_o|x^+|^{\gamma} - K \leq h(x) \leq K(1 + |x|^{\gamma}),\quad |h'(x)| \leq K_1(1 + |x|^{\gamma-1})$$
and
$$|h''(x)| \leq K_2(1 + |x|^{(\gamma-2)^+}).$$
\end{itemize}
\end{enumerate}
\end{assumption}

Notice that under Assumption \ref{ass:casestudy}-(2) the macroeconomic indicator $\eta$ has a suitable diffusive dynamics whose coefficients $b_1,\sigma_1,\sigma_2$ are such that the function $\alpha$ is independent of $q$. As discussed in Remark \ref{rem:particularcases}-$(2)$, this is the case of a geometric or arithmetic diffusive dynamics for $\eta$. In this setting the Kushner-Stratonovich system \eqref{KS} reduces to
\begin{equation}
\label{KS1}
d\pi_t(1) = \big[\lambda_2 - (\lambda_1 + \lambda_2) \pi_t(1) \big] dt + \pi_t(1)(1-  \pi_t(1) )\Big[\frac { \beta_1 - \beta_2} {\sigma} dI_t + (\alpha_1-\alpha_2)dI^1_t\Big], 
\end{equation}
and $\pi_t(2) = 1- \pi_t(1)$. Here, $\lambda_1:= \lambda_{12} >0$ and $\lambda_2 := \lambda_{21} >0$. 

Denoting by $\pi_t := \pi_t(1)$, $t\geq 0$, problem {\textbf{(P2)}} then reads as 
\begin{equation*}
{\textbf{(P3)}}
\left\{
\begin{array}{l}
\displaystyle V(x,y) = \inf_{\nu \in \mathcal A(x,y)}\E_{(x,y)}\bigg[\int_0^{\infty} e^{-\rho t} h(X_t^{\nu}) dt  + \int_0^{\infty} e^{-\rho t} d \nu_t\bigg] \quad \textrm{with} \\
 \\
\displaystyle dX_t^{x,y,\nu} = [\beta_2 + \pi^y_t(g_2 -g_1)] X_t^{x,y,\nu} dt + \sigma X_t^{x,y,\nu} dI_t - d \nu_t, \quad X^{x,y,\nu}_{0-}=x >0,  \\ \\

\displaystyle d\pi^y_t= \big[\lambda_2 - (\lambda_1+\lambda_2)\pi^y_t\big]dt + \pi^y_t(1-\pi^y_t)\Big[\frac{(g_2-g_1)}{\sigma}dI_t + (\alpha_1-\alpha_2)dI^1_t\Big], \quad \pi_0 =y \in (0,1),\\
\end{array}
\right.
\end{equation*}
where $g_i= r -\beta_i$, denotes the rate of economic growth in the state $i$, $i=1,2$.

It is worth noticing that there is no need to involve the process $\eta$ in the Markovian formulation of problem $\textbf{(P3)}$. This is due to the fact that the couple $(X^{\nu},\pi)$, solving the two stochastic differential equations above is a strong Markov process, and the cost functional and the set of admissible controls (denoted by $\mathcal A(x,y)$ above) do not depend explicitly on $\eta$. For this reason the value function of Problem {\textbf{(P3)}} does not depend on the initial value $q$ of the process $\eta$. However, memory of the macroeconomic indicator process $\eta$ appears in the filter $\pi$ through the constant term $\alpha_1 - \alpha_2$ in its dynamics.

Finally, we recall that, thanks to Propostion \ref{equivalence}, by solving Problem {\textbf{(P3)}} we are also solving the original problem {\textbf{(P1)}}. Indeed, we have that 
$$V_{po}(x,y) = V(x,y), \quad \text{for any given and fixed} \quad (x,y) \in (0,\infty) \times (0,1),$$ 
and a control is optimal for the separated problem {\textbf{(P3)}} if and only if it is such for the original problem under partial observation.

In the following analysis, we need (for technical reasons due to the infinite time-horizon of our problem) to take a discount factor sufficiently large. Namely, defining
\begin{align*}
\rho_o:= &  \big(\beta_2 + \frac{1}{2}\sigma^2\big) \vee \left[ \gamma\beta_2 + \frac{1}{2}\sigma^2\gamma(\gamma-1) \right] \vee \big(2\beta_2 + \sigma^2\big) \vee \left[ 24\theta^2 -(\lambda_1+\lambda_2) \right] \nonumber \\
& \vee \big(4\beta_2 + 6\sigma^2\big) \vee \left[ 4 \beta_2 (2\vee \gamma)  + 2\sigma^2(2\vee \gamma) \left( 4 (2\vee \gamma)-1 \right) \right],
\end{align*}
with $\theta^2:=\frac{1}{2}\big[\frac{(g_1-g_2)^2}{\sigma^2} + (\alpha_1-\alpha_2)^2\big]$, we assume the following.
\begin{assumption}
\label{ass:rho}
One has $\rho> \rho_o^+$.
\end{assumption}

Due to the growth condition on $h$, Assumption \ref{ass:rho} in particular ensures that $\rho > \gamma\beta_2 + \frac{1}{2}\sigma^2\gamma(\gamma-1)$ so that the (trivial) admissible control $\nu\equiv 0$ has a finite total expected cost.

%%%%%%%%%%%%%%%%%%%%%%%%%%%%%%%%%%%%%%%%%%%%%%%%%%%

\subsection{The Related Optimal Stopping Problem}
\label{sec:relatedOS}

Motivated by the results of the previous sections (see in particular Theorem \ref{teo:verifico}), we now aim at solving Problem {\textbf{(P3)}} through the study of an auxiliary optimal stopping problem. Informally, the solution to such an optimal stopping problem gives the optimal time at which the government should reduce the debt ratio by one additional unit. The optimal stopping problem involves a two-dimensional diffusive process, and in the following we provide an almost exclusively probabilistic analysis.

\subsubsection{Formulation and Preliminary Results}
\label{sec:diffusionOS}

Recall that $(I_t,I^1_t)_{t\geq 0}$ is a two-dimensional, standard $\mathbb{H}$-Brownian motion, and introduce the two-dimensional diffusion process $(\X,\pi):=(\X_t,\pi_t)_{t\geq0}$ solving the stochastic differential equations (SDEs)
\begin{equation}
\label{syst:XYhat}
\left\{
\begin{array}{lr}
\displaystyle d\X_t= \X_t \left[ \beta_2 +(g_2-g_1)\pi_t \right] dt + \sigma \X_t dI_t, \\[+8pt]
\displaystyle d\pi_t=\big[\lambda_2 - (\lambda_1+\lambda_2)\pi_t\big]dt + \pi_t(1-\pi_t)\Big[\frac{(g_2-g_1)}{\sigma}dI_t + (\alpha_1-\alpha_2)dI^1_t\Big],
\end{array}
\right.
\end{equation}
with initial conditions $\X_0=x$, $\pi_0=y$ for any $(x,y):=(0,\infty) \times (0,1)$. In the following, we set $\mathcal{O}:=(0,\infty) \times (0,1)$. Recall that $\beta_2= r - g_2$. 

Since the process $\pi$ is bounded, classical results on SDEs ensure that system \eqref{syst:XYhat} admits a unique strong solution, that, when needed, we shall denote by $(\X^{x,y},\pi^y)$ in order to stress its dependence on the initial datum $(x,y) \in \mathcal{O}$. In particular, one easily obtains 
\begin{equation}
\label{sol:X}
\X^{x,y}_t = x e^{(\beta_2 - \frac{1}{2}\sigma^2)t + \sigma I_t + (g_2-g_1)\int_0^t \pi^y_s ds}, \quad t \geq 0,
\end{equation} 

Moreover, it can be shown that the Feller's test of explosion (see, e.g., Chapter 5.5 in \cite{KS}) gives that $1= \P(\pi^y_t \in (0,1),\,\,\forall t\geq0)$ for all $y \in (0,1)$. In fact, the boundary points $0$ and $1$ are entrance-not-exit (cf.\ \cite{BoroSal}, p.\ 15), hence unattainable for the process $\pi$.

With regard to Remark \ref{rem:OS-Markov-case}, here we study the fully two-dimensional Markovian optimal stopping problem with value function
\begin{align}
\label{valueOS}
v(x,y) & := \inf_{\tau \geq 0}\E_{(x,y)}\bigg[\int_0^{\tau} e^{- \rho t} \X_t h'(\X_t) dt + e^{-\rho\tau} \X_{\tau}\bigg] =: \inf_{\tau \geq 0} \widehat{\mathcal{J}}_{(x,y)}(\tau), \qquad (x,y) \in \mathcal{O}.
\end{align}
In \eqref{valueOS} the optimization is taken over all the $\mathbb{H}$-stopping times, and the symbol $\E_{(x,y)}$ denotes the expectation under the probability measure $\P_{(x,y)}$ on $(\Omega,\mathcal{F})$, defined as $\P_{(x,y)}(\,\cdot\,):=\P(\,\cdot\,|\X_0=x,\pi_0=y)$, for any $(x,y)\in\mathcal{O}$.

Due to the fact that $\pi$ is positive, $g_2 - g_1 <0$, and $\rho>\beta_2$ by Assumption \ref{ass:rho}, one has from \eqref{sol:X} that
\begin{equation}
\label{conventionOS}
\liminf_{t \uparrow \infty} e^{-\rho t} \X_t =0 \quad \P_{(x,y)}-a.s.,
\end{equation}
which implies the convention (cf.\ \eqref{eq:convention}) $e^{-\rho \tau} \X_{\tau}=0$ on $\{\tau=\infty\}$.

Clearly, one has $v\geq 0$ since $\X$ is positive and $h$ is increasing on $\mathbb{R}_+$. Also, $v \leq x$ on $\mathcal{O}$, and we can therefore define the \emph{continuation region} and the \emph{stopping region} as
\begin{equation}
\label{continuation-stopping}
\mathcal{C}:=\{(x,y) \in \mathcal{O}:\, v(x,y) < x\}, \qquad \mathcal{S}:=\{(x,y) \in \mathcal{O}:\, v(x,y) = x\}.
\end{equation}

Notice that integrating by parts the term $e^{-\rho \tau} \X_{\tau}$, taking expectations, and exploiting that for any $\mathbb{H}$-stopping time $\tau$ one has $\E[\int_0^{\tau} e^{-\rho s} \X_s dI_s]=0$ (because $\rho>\beta_2 + \frac{1}{2}\sigma^2$ by Assumption \ref{ass:rho}), we can equivalently rewrite \eqref{valueOS} as
\begin{align}
\label{valueOS-bis}
v(x,y) & := x + \inf_{\tau \geq 0} \E_{(x,y)}\bigg[\int_0^{\tau} e^{-\rho t } \X_t \Big(h'(\X_t)- (\rho - \beta_2 -(g_2-g_1)\pi_t)\Big) dt\bigg],
\end{align}
for any $(x,y) \in \mathcal{O}$. 
From \eqref{valueOS-bis} it is readily seen that
\begin{equation}
\label{inclusion0}
\{(x,y)\in \mathcal{O}:\, h'(x) - (\rho - \beta_2 -(g_2-g_1)y) < 0 \} \subseteq \mathcal{C},
\end{equation}
which implies
\begin{equation}
\label{inclusion}
\mathcal{S} \subseteq \{(x,y)\in \mathcal{O}:\, h'(x) - (\rho - \beta_2 -(g_2-g_1)y) \geq 0 \},
\end{equation}
Moreover, since $\rho$ satisfies Assumption \ref{ass:rho}, and $0 \leq \pi_t \leq 1$ for any $(x,y) \in \mathcal{O}$, one has that
\begin{equation}
\label{integrability}
\E_{(x,y)}\bigg[\int_0^{\infty} e^{- \rho t} \X_t \Big(h'\big(\X_t\big) + \rho + |\beta_2| + |g_2-g_1| dt\Big)\bigg] < \infty,
\end{equation}
and the family of random variables 
$$
\bigg\{\int_0^{\tau} e^{-\rho t } \X_t \Big(h'(\X_t)- (\rho - \beta_2 -(g_2-g_1)\pi_t)\Big) dt,\,\,\tau\,\,\mathbb{H}-\text{stopping time}\bigg\}
$$ 
is therefore $\mathbb{H}$-uniformly integrable under $\P_{(x,y)}$. 

Preliminary properties of $v$ are given in the next proposition.
\begin{proposition}
\label{prop:continuousOS}
The following hold:
\begin{itemize}
\item[(i)] $x \mapsto v(x,y)$ is increasing for any $y \in (0,1)$;
\item[(ii)] $y \mapsto v(x,y)$ is decreasing for any $x \in (0,\infty)$;
\item[(iii)] $(x,y) \mapsto v(x,y)$ is continuous in $\mathcal{O}$.
\end{itemize}
\end{proposition}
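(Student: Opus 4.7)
The plan is to handle the three claims separately, exploiting the explicit multiplicative structure of $\widehat X$ for (i) and (ii), and the pathwise stability of the SDE system for (iii).

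For (i), I use the representation \eqref{sol:X} to write $\widehat X^{x,y}_t = x M^y_t$ with the strictly positive factor $M^y_t := \exp\bigl((\beta_2 - \tfrac{1}{2}\sigma^2)t + \sigma I_t + (g_2 - g_1)\int_0^t \pi^y_s ds\bigr)$ independent of $x$. Under Assumption \ref{ass:casestudy}-(3), $h$ is nondecreasing and convex on $\mathbb R_+$ with $h(0)=0$, hence $h' \geq 0$ and nondecreasing there, so that $z \mapsto z h'(z)$ is nondecreasing on $[0,\infty)$. For $x_1 \leq x_2$ this yields the pathwise domination $\widehat X^{x_1,y}_t h'(\widehat X^{x_1,y}_t) \leq \widehat X^{x_2,y}_t h'(\widehat X^{x_2,y}_t)$ and $\widehat X^{x_1,y}_\tau \leq \widehat X^{x_2,y}_\tau$, hence $\widehat{\mathcal J}_{(x_1,y)}(\tau) \leq \widehat{\mathcal J}_{(x_2,y)}(\tau)$ for every $\mathbb H$-stopping time $\tau$, and the infimum preserves the inequality.

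For (ii), I rewrite the two Brownian terms in \eqref{KS1} as a single stochastic integral $\sqrt{\tfrac{(g_1-g_2)^2}{\sigma^2} + (\alpha_1-\alpha_2)^2}\,\pi(1-\pi)\,dB$, where $B$ is a one-dimensional $\mathbb H$-Brownian motion, and invoke the standard one-dimensional comparison theorem for SDEs (see, e.g., \cite{KS}, Proposition 5.2.18): the drift is Lipschitz and the diffusion $c\,\pi(1-\pi)$ is Lipschitz (a fortiori Hölder-$\tfrac{1}{2}$) on $[0,1]$, hence $y_1 \leq y_2$ yields $\pi^{y_1}_t \leq \pi^{y_2}_t$ a.s.\ for every $t\geq 0$. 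Since $g_2 - g_1 < 0$ by Assumption \ref{ass:casestudy}-(1), multiplying the inequality by this negative factor reverses the order in the exponent of \eqref{sol:X}, giving $\widehat X^{x,y_1}_t \geq \widehat X^{x,y_2}_t$ a.s. Monotonicity of $z \mapsto z h'(z)$ then gives $\widehat{\mathcal J}_{(x,y_1)}(\tau) \geq \widehat{\mathcal J}_{(x,y_2)}(\tau)$ for every $\tau$, and (ii) follows.

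For the upper semi-continuity in (iii), the Lipschitz stability of the system yields $(\widehat X^{x_n,y_n}_t, \pi^{y_n}_t) \to (\widehat X^{x,y}_t, \pi^y_t)$ almost surely, uniformly in $t$ on compact sets, whenever $(x_n,y_n) \to (x,y) \in \mathcal O$. The explicit exponential bound $\widehat X^{x_n,y_n}_t \leq 2x\,\exp\bigl((\beta_2 - \tfrac{1}{2}\sigma^2)t + \sigma I_t\bigr)$ (valid for $n$ large using that $(g_2-g_1)\pi^{y_n}_s \leq 0$), the polynomial growth of $h'$ from Assumption \ref{ass:casestudy}-(3)(ii), and the integrability \eqref{integrability} (ensured by Assumption \ref{ass:rho}) furnish a dominating function uniform for $(x_n,y_n)$ in a neighbourhood of $(x,y)$. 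Dominated convergence then yields $\widehat{\mathcal J}_{(x_n,y_n)}(\tau) \to \widehat{\mathcal J}_{(x,y)}(\tau)$ for every fixed $\mathbb H$-stopping time $\tau$; choosing $\tau^\varepsilon$ to be $\varepsilon$-optimal for $v(x,y)$ gives $\limsup_n v(x_n,y_n) \leq v(x,y) + \varepsilon$ and, letting $\varepsilon \to 0$, the upper semi-continuity.

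The main obstacle is the lower semi-continuity, because an $\varepsilon$-optimal stopping time for $v(x_n,y_n)$ depends intrinsically on the initial data and need not converge. My plan is a Baxter--Chacon / Meyer compactness argument. Using the integrability \eqref{integrability} and $\rho > \rho_o^+$, one first shows that for every $\varepsilon>0$ there exists $T=T(\varepsilon)$ such that restricting the infimum defining $v(x',y')$ to $\mathbb H$-stopping times $\tau \leq T$ alters its value by at most $\varepsilon$, uniformly for $(x',y')$ in a neighbourhood of $(x,y)$. On the set of $[0,T]$-valued randomized $\mathbb H$-stopping times, which is compact in the Baxter--Chacon topology, one then extracts from a family of $\varepsilon$-optimal stopping times $\{\tau_n\}$ a weakly convergent subsequence. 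The pathwise continuity of $(\widehat X, \pi)$ together with the uniform integrability above allows to pass to the limit inside the expectation via Fatou's lemma, delivering $v(x,y) \leq \liminf_n \widehat{\mathcal J}_{(x_n,y_n)}(\tau_n) + \varepsilon = \liminf_n v(x_n,y_n) + 2\varepsilon$, whence lower semi-continuity upon $\varepsilon \to 0$.
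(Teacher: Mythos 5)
Parts (i), (ii), and the upper semicontinuity half of (iii) follow the paper's own proof; if anything you are slightly more careful than the paper in reducing the two-dimensional noise in \eqref{KS1} to a single one-dimensional Brownian motion before invoking the Yamada--Watanabe comparison theorem.

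Your lower-semicontinuity argument is genuinely different from the paper's, and as written it has a gap. The $\varepsilon$-optimal stopping time $\tau_n$ for $v(x_n,y_n)$ enters a cost functional that depends on $(x_n,y_n)$ through \emph{both} the stopping time and the controlled process $(\widehat{X}^{x_n,y_n},\pi^{y_n})$ inside the expectation. Baxter--Chacon compactness gives weak convergence of a subsequence $\tau_{n_k}\to\tau^\ast$ to a randomized stopping time and lets you pass the payoff of a \emph{fixed} underlying process to the limit. It does not by itself handle the simultaneous variation of the process in the integrand. To close the argument you would need $\sup_\tau\bigl|\widehat{\mathcal J}_{(x_{n},y_{n})}(\tau) - \widehat{\mathcal J}_{(x,y)}(\tau)\bigr|\to 0$ along the $\varepsilon$-optimal family; but once you have that estimate, the compactness step is superfluous, because $v(x,y)\leq \widehat{\mathcal J}_{(x,y)}(\tau_n)=\widehat{\mathcal J}_{(x_n,y_n)}(\tau_n)+o(1)\leq v(x_n,y_n)+\varepsilon+o(1)$ already gives lower semicontinuity.

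The paper's proof exploits exactly this observation and avoids compactness entirely: it inserts the stopping time $\tau_\varepsilon^n$ ($\varepsilon$-optimal for $v(x_n,y_n)$, suboptimal for $v(x,y)$) into \emph{both} cost functionals, obtaining $v(x,y)-v(x_n,y_n)\leq \widehat{\mathcal J}_{(x,y)}(\tau_\varepsilon^n)-\widehat{\mathcal J}_{(x_n,y_n)}(\tau_\varepsilon^n)+\varepsilon$, and then shows that the difference on the right tends to zero by dominated convergence. The dominating random variable is built from $\widehat{X}^{x+\delta,y-\delta}$ via the monotonicity you prove in (i)--(ii), and the pointwise vanishing of the integrand follows from a.s.\ continuity of $(x,y)\mapsto(\widehat{X}^{x,y}_t,\pi^y_t)$ together with the discounting. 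Crucially this requires no control whatsoever on the behaviour of $\tau_\varepsilon^n$, because the same stopping time appears in both terms of the difference. You should replace the Baxter--Chacon plan by this shorter and more elementary argument.
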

\begin{proof}
We prove each claim separately.
\vspace{0.15cm}

\emph{(i)}. Recall \eqref{valueOS}. By the strict convexity and the monotonicity of $h$ and \eqref{sol:X}, it follows that $x \mapsto \widehat{\mathcal{J}}_{(x,y)}(\tau)$ is increasing for any $\mathbb{H}$-stopping time $\tau$, and for any $y \in (0,1)$. Hence the claim is proved.
\vspace{0.1cm}

\emph{(ii)}. This is due to the fact that $y \mapsto \widehat{\mathcal{J}}_{(x,y)}(\tau)$ is decreasing for any stopping time $\tau$ and any $x \in (0,\infty)$. Indeed, the mapping $y \mapsto \X^{x,y}_t$ is a.s.\ decreasing for any $t\geq0$ (because $y \mapsto \pi^y_t$ is a.s.\ increasing by the comparison theorem of Yamada and Watanabe - see, e.g., Proposition 2.18 in Chapter 5.2 of \cite{KS} - and $g_2-g_1<0$), and $x \mapsto xh'(x)$ is increasing. 
\vspace{0.1cm}

\emph{(iii)}. Since $(x,y) \mapsto (\X^{x,y}_t, \pi^y_t)$ is a.s.\ continuous for any $t\geq0$, it is not hard to verify that $(x,y) \mapsto \widehat{\mathcal{J}}_{(x,y)}(\tau)$ is continuous for any given $\tau\geq 0$. Hence, $v$ is upper semicontinuous. We now show that it is also lower semicontinuous.

Let $(x,y) \in \mathcal{O}$ and let $(x_n,y_n)_n \subseteq \mathcal{O}$ be any sequence converging to $(x,y)$. Without loss of generality, we may take $(x_n,y_n) \in (x-\delta,x+\delta) \times (y-\delta,y+\delta)$, for a suitable $\delta>0$. Letting $\tau^n_{\varepsilon}:=\tau^n_{\varepsilon}(x_n,y_n)$ be an $\varepsilon$-optimal for $v(x_n,y_n)$, but suboptimal for $v(x,y)$, we can then write
\begin{align}
\label{lsc}
& v(x,y) - v(x_n,y_n) \leq \E\bigg[\int_0^{\tau^n_{\varepsilon}} e^{-\rho t}\Big( \X^{x,y}_t h'\big(\X^{x,y}_t\big) - \X^{x_n,y_n}_t h'\big(\X^{x_n,y_n}_t\big)\Big) dt \bigg]  \\
&  + \E\Big[e^{-\rho \tau^n_{\varepsilon}} \Big(\X^{x,y}_{\tau^n_{\varepsilon}} - \X^{x_n,y_n}_{\tau^n_{\varepsilon}}\Big)\Big] + \varepsilon. \nonumber
\end{align}
Notice now that a.s.
\begin{align}
\label{domconv1}
& \int_0^{\tau^n_{\varepsilon}} e^{-\rho t}\Big| \X^{x,y}_t h'\big(\X^{x,y}_t\big) - \X^{x_n,y_n} h'\big(\X^{x_n,y_n}_t\big)\Big| dt \leq \int_0^{\infty} e^{-\rho t}\Big(\X^{x,y}_t h'\big(\X^{x,y}_t\big) + \X^{x+\delta,y-\delta}_t h'\big(\X^{x+\delta,y-\delta}_t\big)\Big) dt, \nonumber
\end{align}
where we have used that $x \mapsto \X^{x,y}$ is increasing, $y \mapsto \X^{x,y}$ is decreasing, and $x \mapsto xh'(x)$ is positive and increasing. The random variable on the right-hand side of the latter equation is independent of $n$ and integrable due to \eqref{integrability}.

Also, by an integration by parts, and performing standard estimates, we can write that a.s.
\begin{eqnarray}
\label{domconv2}
&& e^{-\rho \tau^n_{\varepsilon}} \Big(\X^{x,y}_{\tau^n_{\varepsilon}} - \X^{x_n,y_n}_{\tau^n_{\varepsilon}}\Big) 
%& = & x-x_n - \int_0^{\tau^n_{\varepsilon}}e^{-\rho s}\big(\rho - \beta_2 - (g_2-g_1)\pi^y_s\big)\big(\X^{x,y}_s - \X^{x_n,y_n}_s\big) ds \nonumber \\
%& & \hspace{0.2cm} + (g_2-g_1)\int_0^{\tau^n_{\varepsilon}}e^{-\rho s}\big(\pi^y_s \X^{x,y}_s - \pi^{y_n}_s \X^{x_n,y_n}_s\big) ds \\
\leq |x-x_n| + \int_0^{\infty}e^{-\rho s} \big(\rho + |\beta_2| + |g_2-g_1|\big)\big(\X^{x,y}_s + \X^{x+\delta,y-\delta}_s\big) ds, \nonumber
\end{eqnarray}
and the last integral above is independent of $n$ and it has finite expectation due to \eqref{integrability}.

Then, taking limits as $n\uparrow \infty$, invoking the dominated convergence theorem thanks to the previous estimates, and using that $(x,y) \mapsto (\X^{x,y}_t, \pi^y_t)$ is a.s.\
continuous for any $t\geq0$ we find (after rearranging terms) that
$$\liminf_{n\uparrow \infty}v(x_n,y_n) \geq v(x,y) - \varepsilon.$$
We thus conclude that $v$ is lower semicontinuous at $(x,y)$ by arbitrariness of $\varepsilon$. Since $(x,y) \in \mathcal{O}$ was arbitrary as well, then $v$ is lower semicontinuous on $\mathcal{O}$.
\end{proof}

Due to Proposition \ref{prop:continuousOS}-(iii) one has that the stopping region is closed, whereas the continuation region is open. Moreover, thanks to \eqref{integrability} and the $\P_{(x,y)}$-a.s.\ continuity of $t \mapsto \int_0^{t} e^{- \rho s} \X_s (h'(\X_s) - (\rho - \beta_2 -(g_2-g_1)\pi_s) ds$, we can apply Theorem D.12 in Appendix D of \cite{KS2} to obtain that the first entry time of $(\X,\pi)$ into $\mathcal{S}$ is optimal for \eqref{valueOS}; that is,
\begin{equation}
\label{OStime}
\tau^{\star}(x,y):=\inf\big\{t\geq 0:\,(\X_t, \pi_t) \in \mathcal{S}\big\}, \qquad \P_{(x,y)}-a.s., \qquad (x,y) \in \mathcal{O},
\end{equation}
attains the infimum in \eqref{valueOS} (here we adopt the usual convention $\inf \emptyset = \infty$). 

Also, by employing standard means based on the strong Markov property of $(\X,\pi)$ (see, e.g., \cite{PS}, Ch.\ I, Sec.\ 2, Thm.\ 2.4), one can show that, $\P_{(x,y)}$-a.s., the process $S:=\big(S_t\big)_{t\geq0}$, with 
\begin{align*}
\label{subharm1}
S_t:=\Big(e^{-\rho t}v(\X_{t},\pi_t) + \int_0^t e^{- \rho s} \X_s h'\big(\X_t\big) dt\Big)_{t\geq 0},\quad \text{is an $\mathbb{H}$-submartingale}, 
\end{align*}
and that the stopped process $(S_{t\wedge \tau^{\star}}\big)_{t\geq 0}$ is an $\mathbb{H}$-martingale.
%\begin{align}
%\label{subharm2}
%\big(S_{t\wedge \tau^{\star}}\big)_{t\geq 0}\quad \text{is an $\mathbb{H}$-martingale}.
%\end{align}
The latter two conditions are usually referred to as the \emph{subharmonic characterization} of the value function $v$.

We now rule out the possibility of an empty stopping region.
\begin{lemma}
\label{lem:Snotempty}
The stopping region of \eqref{continuation-stopping} is not empty.
\end{lemma}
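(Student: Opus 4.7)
My plan is to argue by contradiction: suppose $\mathcal{S}$ were empty. Then the optimal stopping time $\tau^{\star}$ introduced in \eqref{OStime} would satisfy $\tau^{\star}(x,y)=+\infty$ $\P_{(x,y)}$-a.s.\ for every $(x,y)\in\mathcal{O}$, so that, using convention \eqref{conventionOS} and the fact that $\tau^{\star}$ attains the infimum in \eqref{valueOS}, one would have
\[
v(x,y)\;=\;\E_{(x,y)}\Big[\int_0^{\infty} e^{-\rho t}\,\X_t\,h'(\X_t)\,dt\Big], \qquad (x,y)\in\mathcal{O}.
\]
The strategy is then to pair this representation with the trivial upper bound $v(x,y)\leq x$ (obtained by picking $\tau=0$) and derive a contradiction for $x$ large.

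To bound $v(x,y)$ from below by a super-linear function of $x$, I combine three simple facts. First, strict convexity of $h$ together with $h(0)=0$ gives $\X_t h'(\X_t)\geq h(\X_t)$, and Assumption \ref{ass:casestudy}-(3)(ii) gives $h(\X_t)\geq K_o\X_t^{\gamma}-K$. Second, the closed-form expression \eqref{sol:X}, together with the inequality $(g_2-g_1)\int_0^t\pi_s\,ds\geq(g_2-g_1)t$ (which holds because $g_2-g_1<0$ and $\pi_s\in(0,1)$), yields
\[
\X_t\;\geq\;x\,\exp\!\Big(\big(\beta_2+g_2-g_1-\tfrac{\sigma^2}{2}\big)t+\sigma I_t\Big).
\]
Third, a standard log-normal moment computation gives $\E_{(x,y)}[\X_t^{\gamma}]\geq x^{\gamma}e^{\mu t}$ with $\mu:=\gamma(r-g_1)+\tfrac{\sigma^2}{2}\gamma(\gamma-1)$. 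Since Assumption \ref{ass:rho} is designed so that $\rho>\gamma\beta_2+\tfrac{\sigma^2}{2}\gamma(\gamma-1)$, and the latter strictly exceeds $\mu$ by $\gamma(g_1-g_2)>0$, one has $\rho-\mu>0$; applying Tonelli (the integrand is non-negative) yields
\[
v(x,y)\;\geq\;\frac{K_o}{\rho-\mu}\,x^{\gamma}\;-\;\frac{K}{\rho}.
\]

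Because $\gamma>1$, this lower bound strictly exceeds $x$ for $x$ sufficiently large, contradicting $v(x,y)\leq x$. Hence the starting hypothesis fails and $\mathcal{S}\neq\emptyset$. The only technical point worth checking is that the perpetual integral defining $v$ under $\tau^{\star}\equiv\infty$ is in fact finite, so that the contradiction is meaningful; this is immediate from the upper estimate $|h'(x)|\leq K_1(1+|x|^{\gamma-1})$ in Assumption \ref{ass:casestudy}-(3)(ii) and the matching exponential \emph{upper} bound on $\E[\X_t^{\gamma}]$ guaranteed by Assumption \ref{ass:rho}. Otherwise the argument is entirely elementary and avoids any fine analysis of the geometry of $\mathcal{S}$ — which is deferred to subsequent lemmas.
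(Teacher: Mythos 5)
Your argument is correct and is essentially the paper's proof: contradiction assuming $\mathcal{S}=\emptyset$, then $xh'(x)\geq h(x)$ by convexity, then the polynomial lower bound on $h$, then compare with $v\leq x$ using $\gamma>1$. The only cosmetic difference is that you compute an explicit exponential lower bound on $\E[\X_t^\gamma]$; the paper simply factors $\X_t^{x,y}=x\,\X_t^{1,y}$ and leaves $\E_{(1,y)}\big[\int_0^\infty e^{-\rho t}\X_t^\gamma\,dt\big]$ as a strictly positive constant independent of $x$, which already suffices.
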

\begin{proof}
We argue by contradiction and we suppose that $\mathcal{S} = \emptyset$. Hence, for any $(x,y) \in \mathcal{O}$ we can write 
\begin{eqnarray}
\label{contra}
&& x > v(x,y) =   \E_{(x,y)}\bigg[\int_0^{\infty} e^{-\rho t} \X_t h'(\X_t) dt\bigg] \geq K_o\, x^{\gamma}\, \E_{(1,y)}\bigg[\int_0^{\infty} e^{-\rho t} \big(\X_t\big)^{\gamma} dt\bigg] - \frac{K}{\rho},
\end{eqnarray}
where the inequality $xh'(x) \geq h(x)$, due to convexity of $h$, and the growth condition assumed on $h$ (cf.\ Assumption \ref{ass:casestudy}) have been used. Now, by taking $x$ sufficiently large, we reach a contradiction since $\gamma>1$ by assumption. Hence $\mathcal{S} \neq \emptyset$. 
\end{proof}

\begin{proposition}
\label{prop:freeboundary}
For any $y \in (0,1)$ let
\begin{equation}
\label{freeboundary}
d(y):=\inf\{x>0:\, v(x,y) \geq x\},
\end{equation}
where the convention $\inf \emptyset = + \infty$ has been used. Then
\begin{itemize}
\item[(i)] \begin{equation}
\label{contstopbis}
\mathcal{C}=\{(x,y) \in \mathcal{O}:\, x < d(y)\} \quad \text{and} \quad \mathcal{S}=\{(x,y) \in \mathcal{O}:\, x \geq d(y)\};
\end{equation}
\item[(ii)] $y \mapsto d(y)$ is increasing and left-continuous;
\item[(iii)] there exist $0 < x_{\star} < x^{\star} < \infty$ such that for any $y \in [0,1]$
$$(h')^{-1}\big(\rho - \beta_2) \vee x_{\star} \leq d(y) \leq x^{\star}.$$
\end{itemize}
\end{proposition}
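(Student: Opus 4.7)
The plan is to combine pathwise monotonicity arguments on the representation \eqref{valueOS-bis} with the closedness of the stopping region, and to dominate $d$ from above via a comparison with a one-dimensional auxiliary problem obtained by freezing the filter at $\pi\equiv 1$.

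For (i), I would work from
$$v(x,y) - x \;=\; \inf_{\tau \geq 0} \E_{(x,y)}\!\left[\int_0^{\tau} e^{-\rho t}\,\X_t\bigl(h'(\X_t) - c(\pi_t)\bigr)\,dt\right], \qquad c(y):=\rho-\beta_2+(g_1-g_2)y,$$
and exploit the multiplicative structure $\X^{x,y}_t = x\,\X^{1,y}_t$. Fix $y$ and $x_1 < x_2$ with $(x_1,y)\in\mathcal{S}$: the infimum at $(x_1,y)$ equals zero and is attained at $\tau=0$, so $\E[\int_0^{\tau} e^{-\rho t}\,\X^{x_1,y}_t (h'(\X^{x_1,y}_t) - c(\pi_t))\,dt] \geq 0$ for every stopping time $\tau$. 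Using $\X^{x_2,y}_t = (x_2/x_1)\X^{x_1,y}_t$ and the monotonicity of $h'$, the pathwise bound
$$\X^{x_2,y}_t\bigl(h'(\X^{x_2,y}_t) - c(\pi_t)\bigr) \;\geq\; \tfrac{x_2}{x_1}\,\X^{x_1,y}_t\bigl(h'(\X^{x_1,y}_t) - c(\pi_t)\bigr)$$
yields $v(x_2,y) \geq x_2$. Combined with $v(x_2,y) \leq x_2$ this forces $(x_2,y) \in \mathcal{S}$, so $\mathcal{S}$ is upper-closed along each horizontal fibre. Together with $\mathcal{S}\neq\emptyset$ from Lemma \ref{lem:Snotempty}, this yields \eqref{contstopbis} with $d$ as in \eqref{freeboundary}.

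For (ii), monotonicity of $d$ is immediate from Proposition \ref{prop:continuousOS}-(ii): if $(x,y_1)\in\mathcal{C}$ and $y_2\geq y_1$, then $v(x,y_2)\leq v(x,y_1)<x$, so $(x,y_2)\in\mathcal{C}$; hence the fibres of $\mathcal{C}$ nest as $y$ grows and $d(y_1)\leq d(y_2)$. Being monotone, $d$ admits a left limit $d(y^-)\leq d(y)$. For the reverse, pick $z_n\uparrow y$: by (i) and closedness of $\mathcal{S}$ (from continuity of $v$, Proposition \ref{prop:continuousOS}-(iii)), each $(d(z_n),z_n)\in\mathcal{S}$; passing to the limit, $(d(y^-),y)\in\mathcal{O}$ (using the strict lower bound from (iii) to keep $d(y^-)>0$), and the closedness of $\mathcal{S}$ places $(d(y^-),y)\in\mathcal{S}$. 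By definition of $d$, $d(y)\leq d(y^-)$.

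For (iii), the bound $(h')^{-1}(\rho-\beta_2)\leq d(y)$ is immediate from \eqref{inclusion}, since $(x,y)\in\mathcal{S}$ forces $h'(x)\geq c(y)\geq\rho-\beta_2$. To obtain the strictly positive $x_\star$, I would construct a nonempty no-stop zone near the origin: the envelope $\X^{x,y}_t\leq x\exp((\beta_2-\tfrac{1}{2}\sigma^2)t+\sigma I_t)$, uniform in $y$ because $(g_2-g_1)\pi_t\leq 0$, shows that for $x$ small enough, $\X^{x,y}_t$ remains in the region where $h'(\cdot)-c(\pi_\cdot)$ is uniformly negative on a short deterministic interval $[0,\eps_0]$ with high probability; choosing $\tau=\eps_0$ in the displayed representation of $v(x,y)-x$ above then yields $v(x,y)<x$. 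For the upper bound, I would compare pathwise with the one-dimensional problem obtained by freezing $\pi\equiv 1$: since $(g_2-g_1)<0$ and $\pi_t\in[0,1]$, driving both dynamics by the same $I$ gives $\X^{x,y}_t\geq Y^x_t:=x\exp((\beta_1-\tfrac{1}{2}\sigma^2)t+\sigma I_t)$, and the monotonicity of $z\mapsto zh'(z)$ yields $v(x,y)\geq v^{(1)}(x)$, where $v^{(1)}$ is the value of the one-dimensional OS problem for $Y$. Under Assumption \ref{ass:rho} the analog of Lemma \ref{lem:Snotempty} applies to $v^{(1)}$ (since $\beta_1<\beta_2$), and the 1D counterpart of part (i) produces a finite threshold $d^{(1)}<\infty$ with $v^{(1)}(x)=x$ for $x\geq d^{(1)}$; hence $d(y)\leq x^\star:=d^{(1)}$ uniformly in $y$.

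The main obstacle will be the $y$-uniformity of the strict bounds $x_\star>0$ and $x^\star<\infty$: both rest on the pathwise envelopes of $\X^{x,y}$ by log-normal processes with drifts $\beta_1$ (from below) and $\beta_2$ (from above), which are $y$-free thanks to $g_2-g_1<0$ and $\pi_t\in[0,1]$. Combined with the monotonicity of $zh'(z)$ and the 1D analog of Lemma \ref{lem:Snotempty}, these envelopes reduce the two-dimensional threshold question to a routine one-dimensional one.
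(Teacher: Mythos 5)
Your overall route matches the paper's. For (i) you exploit the multiplicative structure $\X^{x_2,y}_t = (x_2/x_1)\X^{x_1,y}_t$ and the monotonicity of $h'$ on the integrand of $v - x$ to show that $\mathcal{S}$ is upward-closed along horizontal fibres, then combine with $\mathcal{S}\neq\emptyset$; the paper runs exactly the same computation, just through an $\varepsilon$-optimal stopping time for $v(x_2,y)$ rather than a pathwise bound first, and the two are interchangeable. For (ii) you spell out the ``standard argument'' the paper invokes for left-continuity, correctly noting that one needs $d(y^-)\geq x_\star>0$ from (iii) to keep the limit point in $\mathcal{O}$ before appealing to the closedness of $\mathcal{S}$. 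For the upper bound in (iii) your lower envelope $Y^x_t = x\exp\big((\beta_1-\tfrac{1}{2}\sigma^2)t + \sigma I_t\big)$ is precisely the paper's $\Theta^x_t$, and the comparison $v(x,y)\geq v^{(1)}(x)$ with the one-dimensional problem, followed by the 1D analogs of Lemma \ref{lem:Snotempty} and part (i), is the same.

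Your only real departure is the strict lower bound $x_\star>0$, and as written it has a gap. The paper establishes $v(x,y)\leq v_\star(x)$ via the upper envelope $\Psi^x_t = x\exp\big((\beta_2-\tfrac{1}{2}\sigma^2)t+\sigma I_t\big)$ together with the monotonicity of $z\mapsto zh'(z)$, and then reads off $x_\star=\inf\{x>0:v_\star(x)\geq x\}\in(0,\infty)$ from the one-dimensional problem. You instead want to plug the deterministic time $\tau=\eps_0$ into the representation of $v(x,y)-x$ and argue that the expectation is negative because the integrand is negative ``with high probability'' when $x$ is small. That does not close the argument: on the complementary event where $\X_t$ is large, the integrand $\X_t\big(h'(\X_t)-c(\pi_t)\big)$ is positive and possibly large, and its contribution to the expectation is not controlled. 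The natural repair in the spirit of your direct estimate is to stop at $\tau=\eps_0\wedge\inf\{t\geq 0:\X_t\geq\delta\}$ for a fixed, $y$-independent $\delta<(h')^{-1}(\rho-\beta_2)$ (using $c(y)\geq\rho-\beta_2$ for all $y$): then on $[0,\tau]$ the integrand is strictly negative and bounded, $\tau>0$ a.s.\ whenever $x<\delta$, and so $v(x,y)<x$ for $x<\delta$ uniformly in $y$. With that modification, or by adopting the paper's comparison with $v_\star$, the argument is complete.
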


\begin{proof}
\emph{(i)}. To show that \eqref{contstopbis} holds true it suffices to show that if $(x_1,y) \in \mathcal{S}$, then $(x_2,y) \in \mathcal{S}$ for any $x_2 \geq x_1$. Let $\tau^{\varepsilon}:= \tau^{\varepsilon}(x_2,y)$ be an $\varepsilon$-optimal stopping time for $v(x_2,y)$. Then, exploiting the fact that
$\X^{x_2,y}_t = \frac{x_2}{x_1}\X^{x_1,y}_t \geq \X^{x_1,y}_t$ a.s.\ and the monotonicity of $h'$, we can write from \eqref{valueOS-bis}
\begin{eqnarray}
0  & \geq &  v(x_2,y) - x_2 \geq \E\bigg[\int_0^{\tau^{\varepsilon}} e^{- \rho t} \X^{x_2,y}_t\Big(h'\big(\X^{x_2,y}_t \big) - (\rho - \beta_2 - (g_2-g_1)\pi^y_t)\Big) dt\bigg] - \varepsilon \nonumber \\
& \geq & \frac{x_2}{x_1}\,\E\bigg[\int_0^{\tau^{\varepsilon}} e^{- \rho t} \X^{x_1,y}_t\Big(h'\big(\X^{x_1,y}_t \big) - (\rho - \beta_2 - (g_2-g_1)\pi^y_t)\Big) dt\bigg] - \varepsilon \\
& \geq & \frac{x_2}{x_1}\,\big( v(x_1,y) - x_1\big) - \varepsilon = - \varepsilon. \nonumber
\end{eqnarray}
Therefore, by arbitrariness of $\varepsilon$, we conclude that $(x_2,y) \in \mathcal{S}$ as well, and therefore that $d$ as in \eqref{freeboundary} splits $\mathcal{C}$ and $\mathcal{S}$ as in \eqref{contstopbis}.
\vspace{0.1cm}

\emph{(ii)}. Let $(x,y_1)\in \mathcal{C}$. Since $y \mapsto v(x,y)$ is decreasing by Proposition \ref{prop:continuousOS}-(ii), it thus follows that $(x,y_2)\in \mathcal{C}$ for any $y_2 \geq y_1$. This in turn implies that $y \mapsto d(y)$ is increasing. The monotonicity of $y \mapsto d(y)$, together with the fact that $\mathcal{S}$ is closed, then give the claimed left-continuity by standard arguments.
\vspace{0.1cm}

\emph{(iii)}. Let $\Theta^x_t:= x \exp\big\{(\beta_2 - \frac{1}{2}\sigma^2 + (g_2-g_1))t + \sigma I_t\big\}$, and introduce the one-dimensional optimal stopping problem 
\begin{align}
\label{lowerOS}
& v^{\star}(x):= \inf_{\tau \geq 0} \E\bigg[\int_0^{\tau} e^{- \rho t} \Theta^x_t h'(\Theta^x_t) dt + e^{-\rho \tau} \Theta^x_{\tau}\bigg], \quad x>0.
\end{align}
Because $g_2 - g_1 <0$, $h'$ is increasing, and $\pi^y_t \leq 1$ a.s.\ for all $t\geq 0$ and $y\in (0,1)$, it is not hard to see that $v(x,y) \geq v^{\star}(x)$ for any $(x,y) \in \mathcal{O}$.

By arguments similar to those employed to prove \emph{(i)} above one can show that there exists $x^{\star}$ such that $\{x \in (0,\infty):\, v^{\star}(x) \geq x\} = \{x \in (0,\infty):\, x \geq x^{\star}\}$. In fact, by arguing as in the proof of Lemma \ref{lem:Snotempty}, one has that the latter set is not empty. Then the following inclusions hold
\begin{align*}
& \{x \in (0,\infty):\, x \geq x^{\star}\} \subseteq \{(x,y) \in \mathcal{O}:\, v(x,y) \geq x\} = \{(x,y) \in \mathcal{O}:\, x \geq d(y)\},
\end{align*}
which in turn show that $d(y) \leq x^{\star}$ for all $y \in (0,1)$. Hence, also $d(y)\leq x^{\star}$ for all $y \in [0,1]$, by setting $d(0+):=\lim_{y\downarrow 0}d(y)$
by monotonicity, and $d(1):=\lim_{y\uparrow 0}d(y)$ by left-continuity.

As for the lower bound of $d$, notice that \eqref{inclusion} implies
\begin{equation}
\label{lowerbound-b}
d(y) \geq (h')^{-1}\big(\rho - \beta_2 - (g_2-g_1)y\big)=:\zeta(y), \quad y \in (0,1),
\end{equation}
where $(h')^{-1}(\,\cdot\,)$ is the inverse of the strictly increasing function $h': [0,\infty) \mapsto (0,\infty)$ (notice that $\rho - \beta_2 - (g_2-g_1)y \geq0 $ since $\rho>\beta_2$, $g_2-g_1<0$, and $y >0$). Since $(h')^{-1}$ is strictly increasing, and $-(g_2-g_1)y \geq0 $, we can conclude from \eqref{lowerbound-b} that $d(y) \geq (h')^{-1}\big(\rho - \beta_2)$ for every $y\in [0,1]$.

Moreover, setting $\Psi^x_t:= x \exp\{(\beta_2 - \frac{1}{2}\sigma^2)t + \sigma I_t\}$ and introducing the one-dimensional optimal stopping problem 
\begin{align}
\label{upperOS}
& v_{\star}(x):= \inf_{\tau \geq 0} \E\bigg[\int_0^{\tau} e^{- \rho t} \Psi^x_t h'(\Psi^x_t) dt + e^{-\rho \tau} \Psi^x_{\tau}\bigg], \quad x>0,
\end{align}
one has that $v(x,y) \leq v_{\star}(x)$ for any $(x,y) \in \mathcal{O}$. Following arguments as those employed above, the last inequality implies that $d(y) \geq x_{\star}$ for all $y \in [0,1]$, where $x_{\star}:=\inf\{x>0:\, v_{\star}(x) \geq x\} \in (0,\infty)$.
\end{proof}

%%%%%%%%%%%%%%%%%%%%%%%%%%%%%%%%%%%%%%%%%%%%%%

\subsubsection{Smooth-Fit Property and Continuity of the Free Boundary}
\label{sec:smoothfit}

We now aim at proving further regularity of $v$ and of the free boundary $d$. 
 
The second-order linear elliptic differential operator
\begin{align}
\label{generator}
& \mathbb{L} := \big(\beta_2 + (g_2-g_1)y\big)x \frac{\partial}{\partial x} + \frac{1}{2}\sigma^2 x^2 \frac{\partial^2}{\partial x^2} + \Big(\lambda_2 - (\lambda_1 + \lambda_2)y\Big) \frac{\partial}{\partial y} \nonumber \\
&  + \frac{1}{2} \Big((\alpha_1-\alpha_2)^2 + \frac{(g_2-g_1)^2}{\sigma^2}\Big)y^2(1-y)^2 \frac{\partial^2}{\partial y^2},
\end{align}
acting on any function $f \in C^{2}(\mathcal{O})$, is the infinitesimal generator of the process $(\X,\pi)$. The nondegeneracy of the process $(\X,\pi)$, the smoothness of the coefficients in \eqref{generator}, together with the subharmonic characterization of $v$, allow to prove by standard arguments (see, e.g., \cite{PS}, Ch.\ 3, Sec.\ 7.1) and classical regularity results for elliptic partial differential equations (see, e.g., \cite{GT}) the following result. 
\begin{lemma}
\label{lem:vC2cont}
The value function $v$ of \eqref{valueOS} belongs to $C^{2}$ separately strictly inside $\mathcal{C}$ and $\mathcal{S}$ (i.e.\ away from the boundary $\partial \mathcal{C}$ of $\mathcal{C}$). Moreover, inside $\mathcal{C}$ it uniquely solves
\begin{equation}
\label{PDEcont}
\big(\mathbb{L} - \rho)v(x,y) = - x h'(x),
\end{equation}
with $\mathbb{L}$ as in \eqref{generator}.
\end{lemma}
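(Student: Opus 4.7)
The plan is to establish $C^2$ regularity in the interior of $\mathcal{S}$ trivially, and in the interior of $\mathcal{C}$ by the standard local Dirichlet-problem + verification technique, exploiting the fact that the operator $\mathbb{L}$ in \eqref{generator} is uniformly elliptic on any compact subset of $\mathcal{O} = (0,\infty)\times(0,1)$ (since $\sigma>0$ and $y^2(1-y)^2>0$ strictly inside $\mathcal{O}$) and has smooth coefficients.

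Inside $\mathcal{S}$, by Proposition \ref{prop:freeboundary}-(i) one has $v(x,y) = x$ on an open neighbourhood of any point strictly inside $\mathcal{S}$, so $v$ is trivially $C^2$ there. For the continuation set, I fix an arbitrary point $(x_0,y_0) \in \mathcal{C}$ and choose an open ball $B$ with $\overline{B} \subset \mathcal{C}$. Since the coefficients of $\mathbb{L}$ are $C^{\infty}$ and uniformly elliptic on $\overline{B}$, the function $x \mapsto x h'(x)$ is H\"older continuous, and $v$ restricted to $\partial B$ is continuous (by Proposition \ref{prop:continuousOS}-(iii)), classical Schauder theory for linear elliptic equations (see, e.g., Theorems 6.13 and 6.14 of Gilbarg--Trudinger) yields a unique solution $u \in C^{2}(B) \cap C(\overline{B})$ to the Dirichlet problem
\begin{equation*}
(\mathbb{L} - \rho)u(x,y) = - x h'(x) \quad \text{in } B, \qquad u = v \quad \text{on } \partial B;
\end{equation*}
uniqueness at this step follows from the strong maximum principle applied to $\mathbb{L} - \rho$ (with $\rho>0$).

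Next I would run the probabilistic verification. Let $\tau_B := \inf\{t \geq 0 : (\X_t,\pi_t) \notin B\}$, which is $\P_{(x_0,y_0)}$-a.s.\ finite. Applying Dynkin's formula / It\^o to $e^{-\rho t} u(\X_t,\pi_t)$ up to $\tau_B$, using the PDE satisfied by $u$ and the fact that the local martingale part is a true martingale up to $\tau_B$ (bounded gradients of $u$ on $\overline{B}$), I get
\begin{equation*}
u(x_0,y_0) = \E_{(x_0,y_0)}\bigg[\int_0^{\tau_B} e^{-\rho s} \X_s h'(\X_s)\, ds + e^{-\rho \tau_B} v(\X_{\tau_B},\pi_{\tau_B})\bigg].
\end{equation*}
On the other hand, the submartingale/martingale characterization of the process $S_t$ recalled just before Lemma \ref{lem:Snotempty} says that $(S_{t \wedge \tau^\star})_{t\geq 0}$ is an $\mathbb{H}$-martingale; since $\tau_B \leq \tau^\star$ on $\{(\X_0,\pi_0) \in B \subset \mathcal{C}\}$, optional sampling at $\tau_B$ gives the identical representation for $v(x_0,y_0)$. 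Hence $u \equiv v$ on $B$, which proves $v \in C^2(B)$ and that $v$ satisfies \eqref{PDEcont} pointwise at $(x_0,y_0)$. Arbitrariness of $(x_0,y_0)$ yields the claim on all of $\mathcal{C}$; uniqueness of the $C^2$ solution to \eqref{PDEcont} inside $\mathcal{C}$ (with the boundary datum $v=x$ on $\partial \mathcal{C}$) again follows from the strong maximum principle for $\mathbb{L} - \rho$.

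The main technical delicacy I expect lies in justifying the Schauder step and the usability of the martingale identification: the operator $\mathbb{L}$ degenerates on $\partial \mathcal{O}$ (i.e.\ at $y=0$ and $y=1$ and at $x=0$), so it is essential to work with balls $B$ with $\overline{B}$ strictly inside $\mathcal{O}$, where uniform ellipticity and smoothness hold; this is why the statement of the lemma only claims $C^2$ strictly inside $\mathcal{C}$. Everything else is routine once the martingale characterisation of $v$ and the continuity established in Proposition \ref{prop:continuousOS} are in place.
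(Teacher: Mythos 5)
Your proposal follows exactly the route the paper points to: it does not write out a proof for this lemma but instead invokes ``standard arguments'' citing \cite{PS}, Ch.\ 3, Sec.\ 7.1 (the local Dirichlet problem plus probabilistic verification) and \cite{GT} (Schauder theory), which is precisely what you unpack. The trivial case on $\operatorname{int}\mathcal{S}$, the Schauder existence/uniqueness step on a compactly contained ball $B\subset\mathcal{C}$, the Dynkin/verification identification $u\equiv v$ via the (sub)martingale characterization of $v$, and the remark that $\mathbb{L}$ is uniformly elliptic only away from $\partial\mathcal{O}$ --- all of this is the correct and intended argument, and the ingredients you rely on (continuity of $v$ from Proposition \ref{prop:continuousOS}, the martingale property of $(S_{t\wedge\tau^\star})$, H\"older regularity of $xh'(x)$ from Assumption \ref{ass:casestudy}) are all available at this point in the paper.

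One small technical slip worth flagging: you justify turning the local-martingale part of $e^{-\rho t}u(\X_t,\pi_t)$ into a true martingale up to $\tau_B$ by ``bounded gradients of $u$ on $\overline{B}$.'' But Schauder theory with merely continuous boundary data delivers $u\in C^{2,\alpha}(B)\cap C(\overline{B})$, and $\nabla u$ need not stay bounded as one approaches $\partial B$. The standard repair is to stop at $\tau_{B'}\wedge n$ for balls $B'\Subset B$, where $\nabla u$ is bounded, obtain the representation there, and then let $B'\uparrow B$ and $n\uparrow\infty$ using the continuity of $v$ on $\overline{B}$ and the integrability guaranteed by Assumption \ref{ass:rho} and \eqref{integrability}. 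With that small adjustment the argument is complete and matches the paper's intent.
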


We continue our analysis by proving that the value function of \eqref{valueOS} belongs to $C^{1}((0,\infty) \times (0,1))$. This will be obtained through probabilistic methods that rely on the regularity (in the sense of diffusions) of the stopping set $\mathcal{S}$ for the process $(\X,\pi)$ (see \cite{DeAP18} where this methodology has been recently developed in a general context; for other examples refer to \cite{DeAGeVi} and \cite{JP2017}). Recall that the boundary points are regular for $\mathcal{S}$ relative to $(\X,\pi)$ if (cf.\ Definition 2.9 p.\ 249  in \cite{KS})
\begin{equation}
\label{regularity-0}
\widehat{\tau}(x_o,y_o):=\inf\{t>0:\, (\X^{x_o,y_o}_t,\pi^{y_o}_t) \in \mathcal{S}\} = 0 \qquad a.s. \quad \forall (x_o,y_o) \in \partial\mathcal{C}.
\end{equation}
The time $\widehat{\tau}(x_o,y_o)$ is the first hitting time of $(\X^{x_o,y_o},\pi^{y_o})$ to $\mathcal{S}$.

Notice that for every bounded Borel function $f: \mathbb{R}^2 \mapsto \mathbb{R}$ one has $\E_{(x,y)}\big[f(\X_t,\pi_t)\big]=\E_{(u,y)}\big[f(e^{U_t},\pi_t)\big]$, where $u:=\ln(x)$ and $U_t:=\ln(\X_t)$ is such that $dU_t=\big(\beta_2 + (g_2-g_1)\pi_t - \frac{1}{2}\sigma^2\big)dt + \sigma dI_t$. Due the nondegeneracy of the process $(U,\pi)$, and the smoothness and boundedness of its coefficients, we have that $(U,\pi)$ has a continuous transition density $\widehat p(\cdot,\cdot,\cdot;u,y)$, $(u,y) \in \mathbb{R} \times (0,1)$, such that for any $t\geq0$ and $(u',y') \in \mathbb{R} \times (0,1)$ (see, e.g., \cite{Aronson})
\begin{eqnarray}
\label{bounds-Gaussian}
&& \frac{M}{t}\exp\Big\{-\lambda\frac{\big((u-u')^2 + (y-y')^2\big)}{t}\Big\} \geq \widehat{p}(t,u',y';u,y) \nonumber \\
&& \geq \frac{m}{t}\exp\Big\{-\Lambda\frac{\big((u-u')^2 + (y-y')^2\big)}{t}\Big\},
\end{eqnarray}
for some constants $M>m>0$ and $\Lambda > \lambda >0$.   
It thus follows that $(u,y) \mapsto \E_{(u,y)}\big[f(e^{U_t},\pi_t)\big]$ is continuous, so that $(U,\pi)$ is a strong Feller process. Hence, $(\X,\pi)$ is strong Feller as well, and we can therefore conclude that \eqref{regularity-0} holds true if and only if (see \cite{DYN}, pp.\ 32-40)
\begin{equation}
\label{regularity1}
\tau^{\star}(x_n,y_n) \rightarrow 0 \quad \text{a.s.} \quad \text{whenever} \quad \mathcal{C} \supseteq (x_n,y_n)_{n} \rightarrow (x_o,y_o) \in \partial \mathcal{C},
\end{equation}
where $\tau^{\star}$ is as in \eqref{OStime}.

The next proposition shows the validity of \eqref{regularity-0}.

\begin{proposition}
\label{prop:regularity}
The boundary points in $\partial \mathcal{C}$ are regular for $\mathcal{S}$ relative to $(\X,\pi)$; that is, \eqref{regularity-0} holds.
\end{proposition}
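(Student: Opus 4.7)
The plan is to exploit the Gaussian lower bound on the transition density of $(\X,\pi)$ recorded in \eqref{bounds-Gaussian}, the fact that $\cS$ contains a two-dimensional solid ``quadrant'' adjacent to any boundary point, and Blumenthal's $0$-$1$ law. The argument is short once the quadrant structure has been identified, and it side-steps any need to analyze the two driving Brownian motions $I,I^1$ individually.

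Fix $(x_o,y_o)\in\partial\cC$; by \eqref{contstopbis} one has $x_o\geq d(y_o)$ and $(x_o,y_o)\in\cS$. By the monotonicity of $d$ (Proposition \ref{prop:freeboundary}(ii)), every $(x,y)$ with $x>x_o$ and $y<y_o$ satisfies $d(y)\leq d(y_o)\leq x_o<x$ and hence belongs to $\cS$. Passing to the logarithmic variable $u=\ln x$ (the setting in which the bound \eqref{bounds-Gaussian} is phrased) and writing $\widehat p$ for the density of $(U_t,\pi_t)$, I would integrate $\widehat p(t,\cdot,\cdot;u_o,y_o)$ over the log-image of the quadrant $(x_o,x_o+\epsilon)\times(y_o-\epsilon,y_o)\subseteq\cS$. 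The change of variables $u'-u_o=\sqrt t\,\xi$, $y_o-y'=\sqrt t\,\eta$ cancels the $1/t$ prefactor in \eqref{bounds-Gaussian} against the Jacobian and converts the exponent into the fixed Gaussian $-\Lambda(\xi^2+\eta^2)$, giving
\[
\P_{(x_o,y_o)}\!\left((\X_t,\pi_t)\in\cS\right)\;\geq\; m\int_0^{\epsilon/\sqrt t}\!\int_0^{\epsilon/\sqrt t} e^{-\Lambda(\xi^2+\eta^2)}\,d\xi\,d\eta\;\geq\;c
\]
for some constant $c=c(\epsilon,m,\Lambda)>0$ and every $t\in(0,\epsilon^2]$.

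Since $\{(\X_t,\pi_t)\in\cS\}\subseteq\{\widehat\tau(x_o,y_o)\leq t\}$, taking $t\downarrow 0$ in the decreasing family $\{\widehat\tau\leq t\}$ and using continuity of $\P_{(x_o,y_o)}$ along decreasing sequences yields $\P_{(x_o,y_o)}(\widehat\tau(x_o,y_o)=0)\geq c>0$. The event $\{\widehat\tau=0\}=\bigcap_{n\geq 1}\{\widehat\tau\leq 1/n\}$ lies in the germ $\sigma$-field at time $0$ of the strong Markov process $(\X,\pi)$, so Blumenthal's $0$-$1$ law upgrades this positive probability to $1$, which is exactly \eqref{regularity-0}. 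The main subtlety is the delicate matching of the $1/t$ prefactor of the density against the $\sqrt t$-scale of the neighbourhood effectively probed by the diffusion, and here the monotonicity (together with left-continuity) of $d$ plays the essential role: it guarantees that a whole open quadrant of positive two-dimensional Lebesgue measure sits inside $\cS$, so that none of the Gaussian mass is lost to the geometry of the stopping set.
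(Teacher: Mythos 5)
Your proof is correct and follows essentially the same route as the paper's: passing to logarithmic coordinates, using the Aronson-type lower Gaussian bound \eqref{bounds-Gaussian}, exploiting monotonicity of $d$ to place a solid region of $\mathcal{S}$ adjacent to every boundary point, performing the parabolic scaling $u'\mapsto(u'-u_o)/\sqrt{t}$, $y'\mapsto(y_o-y')/\sqrt{t}$ to extract a uniform positive lower bound, and finishing with Blumenthal's $0$-$1$ law. The only cosmetic difference is that the paper phrases the geometric step via the \emph{cone property}: the monotonicity of $y\mapsto\ln(d(y))$ puts a full (untruncated) cone with vertex at $(u_o,y_o)$ inside $\widehat{\mathcal{S}}$, and since an infinite cone is invariant under the $\sqrt{t}$-rescaling, the resulting constant $\ell=m\int_{C_o}e^{-\Lambda(\xi^2+\eta^2)}\,d\xi\,d\eta$ is independent of $t$ for \emph{all} $t>0$, not just small $t$. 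Your truncated quadrant $(x_o,x_o+\epsilon)\times(y_o-\epsilon,y_o)$ is a perfectly good substitute (a quadrant is a cone of aperture $\pi/2$); it only gives a uniform bound for $t$ below a threshold, which is all that is needed as $t\downarrow0$. One small imprecision: the log-image of $(x_o,x_o+\epsilon)$ has width $\ln(1+\epsilon/x_o)$ rather than $\epsilon$, so the $\xi$-integral's upper limit should be $\ln(1+\epsilon/x_o)/\sqrt{t}$ rather than $\epsilon/\sqrt{t}$; this does not affect the conclusion, since all that matters is that the rescaled domain eventually contains a fixed box of positive measure.
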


\begin{proof}
Let $(x_o,y_o)\in \partial \mathcal{C}$, and set $u_o:=\ln(x_o)$. With $U$ as defined above, we set $\widehat{\sigma}(u_o,y_o):=\widehat{\tau}(e^{u_o},y_o)$, $(u_o,y_o) \in \mathbb{R} \times (0,1)$, and we equivalently rewrite \eqref{regularity-0} in terms of the process $(U,\pi)$ as  
$$\widehat{\sigma}(u_o,y_o):=\inf\{t>0:\, U^{u_o,y_o}_t \geq \ln(d(\pi^{y_o})\} = 0\,\, \mbox{a.s.} \,\, \forall (u_o,y_o) \,\, \mbox{such that} \,\, u_o=\ln(d(y_o)).$$

Given that $y \mapsto \ln(d(y))$ is increasing (since $y \mapsto d(y)$ is such), then the region $\widehat{\mathcal{S}}:=\{(u,y) \in \mathbb{R} \times (0,1):\, u \geq \ln(d(y))\}$ enjoys the so-called \emph{cone property} (see \cite{KS}, p.\ 250). In particular, we can always construct a cone $C_o$ with vertex in $(u_o,y_o)$ and aperture $0 \leq \phi \leq \pi/2$ such that $C_o \cap (\mathbb{R} \times (0,1)) \subseteq \widehat{\mathcal{S}}$, and for any $t_o\geq 0$ we can write that
\begin{equation}
\label{estimatereg}
\P(\widehat{\sigma}(u_o,y_o) \leq t_o) \geq \P((U^{u_o,y_o}_{t_o}, \pi^{y_o}_{t_o}) \in C_o).
\end{equation}

Then using \eqref{bounds-Gaussian} one has
\begin{align}
\label{conoinv}
& \P( (U^{u_o,y_o}_{t_o}, \pi^{y_o}_{t_o}) \in C_o) = \int_{C_o} \widehat{p}(t_o,u_o,y_o;u,y) du dy \geq  \int_{C_o} \frac{m}{t_o}e^{-\Lambda\frac{((u-u_o)^2 + (y-y_o)^2)}{t_o}} du dy \nonumber \\
& = m \int_{C_o} e^{-\Lambda \big((u')^2 + (y')^2\big)} du' dy' =:\ell > 0, 
\end{align}
where we have used that the change of variable $u':= (u-u_o)/\sqrt{t_o}$ and $y':= (y-y_o)/\sqrt{t_o}$ maps the cone $C_o$ into itself. The number $\ell$ above depends on $u_o,y_o$, but it is independent of $t_o$. From \eqref{estimatereg} and \eqref{conoinv} we thus have that $\P(\widehat{\sigma}(u_o,y_o) \leq t_o) \geq \ell$, and letting $t_o \downarrow 0$ we obtain $\P(\widehat{\sigma}(u_o,y_o) = 0) \geq \ell > 0$. However, $\{\widehat{\sigma}(u_o,y_o) = 0\} \in \mathcal{H}_{0}$, and by the Blumenthal's 0-1 Law we obtain $\P(\widehat{\sigma}(u_o,y_o) = 0) =1$, which completes the proof.
\end{proof}

\begin{theorem}
\label{thm:C1}
One has that $v \in C^1(\mathcal{O})$. 
\end{theorem}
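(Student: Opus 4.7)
The plan is to establish the \emph{smooth-fit property} at every boundary point of the continuation region: for every $(x_o,y_o) \in \partial\mathcal{C}$ and every sequence $\mathcal{C} \ni (x_n,y_n) \to (x_o,y_o)$,
\begin{equation*}
\lim_{n\to\infty} v_x(x_n,y_n) = 1 \qquad \text{and} \qquad \lim_{n\to\infty} v_y(x_n,y_n) = 0.
\end{equation*}
Indeed $v \in C^2(\mathcal{C})$ by Lemma \ref{lem:vC2cont} and $v(x,y)=x$ is smooth on the interior of $\mathcal{S}$, so once smooth-fit is proved across $\partial\mathcal{C}$ the claim $v \in C^1(\mathcal{O})$ follows. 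The key input is Proposition \ref{prop:regularity}, which through \eqref{regularity1} ensures $\tau^\star(x_n,y_n) \to 0$ almost surely along any such sequence.

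For the $x$-derivative, I would exploit the linearity $\X^{x,y}_t = x\,\X^{1,y}_t$ from \eqref{sol:X} to couple the dynamics starting from different values of $x$ on a common probability space. For $(x,y) \in \mathcal{C}$ and $h>0$, using $\tau^\star(x,y)$ as a suboptimal stopping time for the problem starting at $(x+h,y)$ and $\tau^\star(x+h,y)$ as suboptimal for the one starting at $(x,y)$ gives a sandwich of $v(x+h,y)-v(x,y)$ between two expressions involving the increment $(x+h)h'((x+h)\X^{1,y}_t)-x h'(x\X^{1,y}_t)$ on the respective random intervals and the terminal term $h\,\X^{1,y}_{\tau^\star(\cdot,y)}$. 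Dividing by $h$ and letting $h\downarrow 0$, the polynomial growth bounds in Assumption \ref{ass:casestudy}(3)(ii) together with the moment estimates afforded by Assumption \ref{ass:rho} dominate the integrands uniformly and justify dominated convergence, yielding that $v_x$ exists in $\mathcal{C}$ and admits a probabilistic representation as an expectation over the random interval $[0,\tau^\star(x,y)]$. Taking then $\mathcal{C}\ni(x_n,y_n)\to(x_o,y_o)\in\partial\mathcal{C}$, regularity of the boundary forces $\tau^\star(x_n,y_n)\to 0$, and a further dominated-convergence argument collapses the representation to $\E[\X^{1,y_o}_0]=1$, delivering the desired limit.

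For the $y$-derivative the argument is analogous, after a preliminary step: by coupling $\pi^{y+h}$ and $\pi^y$ on the same probability space and invoking classical differentiability of SDE solutions in the initial condition (the filter equation in \eqref{syst:XYhat} has smooth coefficients and $\pi$ stays bounded in $[0,1]$), the variational processes $\partial_y \pi^y$ and $\partial_y \X^{x,y}$ exist and satisfy linear SDEs with bounded coefficients. Repeating the sandwich-and-divide scheme produces a probabilistic expression for $v_y$ on $\mathcal{C}$ as an expectation over $[0,\tau^\star(x,y)]$, and the regularity statement of Proposition \ref{prop:regularity} then yields $v_y(x_n,y_n)\to 0$ as $(x_n,y_n)\to(x_o,y_o)\in\partial\mathcal{C}$.

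The main obstacle will be securing the uniform-in-$n$ domination needed for the two dominated-convergence steps---one producing the probabilistic representation of the derivative, the other passing it to the limit at $\partial\mathcal{C}$---especially for $v_y$, where the sensitivity $\partial_y\pi^y$ has stochastic coefficients and one must also control the joint behaviour of $h'(\X^{x,y}_t)$, $h''(\X^{x,y}_t)$ and $\partial_y\X^{x,y}_t$. This is precisely the point at which the lower bound on $\rho$ in Assumption \ref{ass:rho}, with its terms involving $\gamma$ and $2\vee\gamma$, becomes essential, as it supplies the moment estimates on $\X^{1,y}_t$ that tame the polynomial growth of $h'$ and $h''$ from Assumption \ref{ass:casestudy}(3)(ii). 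Once these bounds are in place, the interior $C^2$-regularity of Lemma \ref{lem:vC2cont}, the smoothness of $v\equiv x$ on the interior of $\mathcal{S}$, and the matching of first derivatives across $\partial\mathcal{C}$ together yield $v\in C^1(\mathcal{O})$.
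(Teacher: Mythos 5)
Your plan is essentially the paper's proof: reduce $C^1$-regularity to the smooth-fit conditions $v_x\to 1$, $v_y\to 0$ across $\partial\mathcal{C}$, obtain these from difference quotients dominated via the moment bounds guaranteed by Assumption \ref{ass:rho}, and collapse the resulting expressions over $[0,\tau^\star]$ to zero using Proposition \ref{prop:regularity}. One small but useful simplification the paper makes, which you might adopt: rather than the full two-sided sandwich you propose (which implicitly requires $\tau^\star(x+h,y)\to\tau^\star(x,y)$ a.s.\ and hence a continuity-of-stopping-times argument), the paper pairs a trivial sign bound with a single $\varepsilon$-suboptimal stopping time — namely $0\le\overline{w}_x\le\E[\int_0^{\tau^\star}\cdots]$ after passing to $\overline{w}:=\tfrac1x(v-x)$ (which kills the terminal term), and $0\ge v_y\ge -\E[\int_0^{\tau^\star}\cdots]$ from monotonicity — so that only one side needs the domination argument and both sides vanish at the boundary.
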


\begin{proof}
The value function belongs to $C^{2}$ strictly inside the continuation region due to Lemma \ref{lem:vC2cont}, and it is $C^{\infty}$ strictly inside the stopping region where $v=x$. It thus only remains to prove that $v$ is continuously differentiable across $\partial \mathcal{C}$. In the following, we will prove that: \emph{(i)} the function $\overline{w}:=\frac{1}{x}(v - x)$ has continuous derivative with respect to $x$ across $\partial \mathcal{C}$ (and this clearly implies the continuity of $v_x$ across $\partial \mathcal{C}$); \emph{(ii)} that the function $v_y$ is continuous across $\partial \mathcal{C}$.
\vspace{0.1cm}

\emph{(i)\, Continuity of $v_x$ across $\partial \mathcal{C}$}. For the subsequent arguments it is useful to notice that the function $\overline{w}=\frac{1}{x}(v-x)$ admits the representation (recall \eqref{valueOS-bis})
\begin{equation}
\label{overlinew}
\overline{w}(x,y)=\inf_{\tau \geq 0}\E\bigg[\int_0^{\tau} e^{-\rho t} \X^{1,y}_t\Big(h'\big(\X^{x,y}_t\big) - \big(\rho - \beta_2 -(g_2-g_1)\pi^y_s\Big) ds\bigg],
\end{equation}
and to bear in mind that the optimal stopping time $\tau^{\star}$ for $v$ as in \eqref{OStime} is also optimal for $\overline{w}$ since $v \geq x$ if and only if $\overline{w}\geq 0$. We now prove that $\overline{w}_x$ is continuous across $\partial \mathcal{C}$, thus implying continuity of $v_x$ across $\partial \mathcal{C}$.

Take $(x,y) \in \mathcal{C}$, and let $\varepsilon>0$ be such that $x-\varepsilon>0$. Since $x \mapsto \overline{w}(x,y)$ is increasing (due to the monotonicity of $h'$) it is clear that $(x-\varepsilon,y) \in \mathcal{C}$ as well. Denote by $\tau^{\star}_{\varepsilon}(x,y):=\tau^{\star}(x-\varepsilon,y)$ the optimal stopping time for $\overline{w}(x-\varepsilon,y)$, and notice that $\tau^{\star}_{\varepsilon}(x,y)$ is suboptimal for $\overline{w}(x,y)$ and $\tau^{\star}_{\varepsilon}(x,y)\rightarrow \tau^{\star}(x,y)$ a.s. To simplify exposition in the following we write $\tau^{\star}_{\varepsilon}:=\tau^{\star}_{\varepsilon}(x,y)$ and $\tau^{\star}:=\tau^{\star}(x,y)$. We can then write from \eqref{overlinew}
\begin{align}
\label{wxcont-1}
& 0 \leq \frac{\overline{w}(x,y) - \overline{w}(x-\varepsilon,y)}{\varepsilon} \leq \frac{1}{\varepsilon}\E\bigg[\int_0^{\tau^{\star}_{\varepsilon}} e^{-\rho t} \X^{1,y}_t\Big(h'\big(\X^{x,y}_t\big) - h'\big(\X^{x-\varepsilon,y}_t\big)\Big) dt \bigg]\nonumber \\
& = \E\bigg[\int_0^{\tau^{\star}_{\varepsilon}} e^{-\rho t} (\X^{1,y}_t)^2 h''\big(\X^{\xi_{\varepsilon},y}_t\big) dt \bigg], \nonumber
\end{align}
for some $\xi_{\varepsilon} \in (x-\varepsilon,x)$, and where in the last step we have used the mean value theorem, and the fact that $\X^{x,y}_t - \X^{x-\varepsilon,y}_t = \varepsilon \X^{1,y}_t$.
Letting $\varepsilon \downarrow 0$, invoking the dominated convergence theorem (thanks to the fact that $\rho>\big(\gamma\beta_2 + \frac{1}{2}\sigma^2\gamma(\gamma-1)\big) \vee \big(2\beta_2 + \sigma^2\big)$ by Assumption \ref{ass:rho}), and using that $\overline{w} \in C^1(\mathcal{C})$ (since $v \in C^1(\mathcal{C})$), we then find from the latter that
\begin{equation}
\label{wxcont-2}
0 \leq \overline{w}_x(x,y) \leq \E\bigg[\int_0^{\tau^{\star}} e^{-\rho t} (\X^{1,y}_t)^2 h''\big(\X^{x,y}_t\big) dt \bigg].
\end{equation}
Let now $(x_o,y_o)$ be any arbitrary point belonging to $\partial \mathcal{C}$. Taking limits in \eqref{wxcont-2} as $(x,y) \rightarrow (x_o,y_o)$, by the dominated convergence theorem and thanks to Proposition \ref{prop:regularity} we obtain that 
$$0 \leq \liminf_{(x,y) \rightarrow (x_o,y_o) \in \partial \mathcal{C}}\overline{w}_x(x,y)  \leq \limsup_{(x,y) \rightarrow (x_o,y_o) \in \partial \mathcal{C}}\overline{w}_x(x,y) \leq 0,$$ 
thus proving that $\overline{w}_x$ is continuous across $\partial \mathcal{C}$. This immediately implies the continuity of $v_x$ across $\partial \mathcal{C}$, upon recalling that $v=x(\overline{w}+1)$.
\vspace{0.1cm}

\emph{(ii)\, Continuity of $v_y$ across $\partial \mathcal{C}$}. Take again $(x,y) \in \mathcal{C}$, and let $\varepsilon>0$ be such that $y+\varepsilon<1$. Since $y \mapsto v(x,y)$ is decreasing (cf.\ Proposition \ref{prop:continuousOS}-(ii)), it is clear that $(x,y+\varepsilon) \in \mathcal{C}$ as well. Denote by $\tau^{\star}_{\varepsilon}(x,y):= \tau^{\star} (x, y+\varepsilon)$ the optimal stopping time for $v(x, y +\varepsilon)$ and notice that $\tau^{\star}_{\varepsilon}(x,y)$ is suboptimal for $v(x,y)$ and $\tau^{\star} (x, y+\varepsilon) \rightarrow \tau^{\star} (x,y) $ a.s.\ as $\varepsilon \downarrow 0$. In order to simplify the notation, in the following we write $\tau^{\star}_{\varepsilon}$ instead of $\tau^{\star}_{\varepsilon}(x,y)$.

From Proposition \ref{prop:continuousOS}-(ii) and \eqref{valueOS-bis} we can then write
\begin{align*}
\label{}
0  \ge &\frac{v(x,y+\varepsilon) - v(x,y)}{\varepsilon} \ge  \frac{1}{\varepsilon} \ \E\bigg[\int_0^{\tau^{\star}_{\varepsilon}} e^{- \rho t} \X^{x,y+\varepsilon}_t  \Big[  h'\big( \X^{x,y+\varepsilon}_t \big) -  \left( \rho - \beta_2 -  \pi^{y+\varepsilon}_t (g_2 - g_1)   \right) \Big] dt \bigg] \nonumber   \\   
& - \frac{1}{\varepsilon} \ \E \bigg[\int_0^{\tau^{\star}_{\varepsilon}} e^{- \rho t} \X^{x,y}_t  \Big[  h'\big( \X^{x,y}_t \big) -  \big( \rho - \beta_2 -  \pi^{y}_t   (g_2 - g_1) \big) \Big] dt \bigg]  \nonumber   \\ 
= &  \frac{1}{\varepsilon} \left( \E\bigg[ \int_0^{\tau^{\star}_{\varepsilon}} e^{- \rho t} \left[ \X^{x,y+\varepsilon}_t  h'\big( \X^{x,y+\varepsilon}_t \big) -  \X^{x,y}_t  h'\big( \X^{x,y}_t \big) \right] dt - \int_0^{\tau^{\star}_{\varepsilon}} e^{- \rho t} (\rho - \beta_2) \left(  \X^{x,y+\varepsilon}_t - \X^{x,y}_t  \right) dt \bigg] \right) \nonumber   \\  
& + \frac{1}{\varepsilon} \ \E\bigg[\int_0^{\tau^{\star}_{\varepsilon}} e^{- \rho t} (g_2 - g_1) \left(  \X^{x,y+\varepsilon}_t \pi^{y+\varepsilon}_t - \X^{x,y}_t \pi^{y}_t \right) dt \bigg].  \nonumber   
\end{align*}
Now, add and subtract both $\E[\int_0^{\tau^{\star}_{\varepsilon}} e^{- \rho t}\X^{x,y+\varepsilon}_t  h'( \X^{x,y}_t) dt]$ and $(g_2-g_1)\E[\int_0^{\tau^{\star}_{\varepsilon}} e^{- \rho t} \X^{x,y+\varepsilon}_t \pi^{y}_t dt]$ in the right-hand side of the latter, and recall that $(g_2 - g_1) <0$, that $\X^{x,y}_t  \ge 0$ a.s.\ for every $t \ge 0$, as well as that $(\pi^{y+\varepsilon}_t - \pi^{y}_t) \ge 0$ a.s.\ for every $t \ge 0$. Then, after rearranging terms and employing the integral mean value theorem (for some $L_t^{\varepsilon} \in (\X^{x,y+\varepsilon}_t, \X^{x,y}_t)$ a.s.), we obtain from the equation above that
\begin{align}
\label{stimaC1y-a}
0  \ge &\frac{v(x,y+\varepsilon) - v(x,y)}{\varepsilon} \geq \frac{1}{\varepsilon} \ \E \bigg[ \int_0^{\tau^{\star}_{\varepsilon}} e^{- \rho t} \X^{x,y+\varepsilon}_t  \Big[  h'\big( \X^{x,y+\varepsilon}_t \big)  -   h'\big( \X^{x,y}_t \big)  \Big] dt \bigg]  \nonumber   \\   
& + \frac{1}{\varepsilon} \ \E \bigg[\int_0^{\tau^{\star}_{\varepsilon}} e^{- \rho t} \Big(  \X^{x,y+\varepsilon}_t  -   \X^{x,y}_t   \Big)  \Big[  h'\big( \X^{x,y}_t \big) -  \Big( \rho - \beta_2 -  \pi^{y}_t (g_2 - g_1)  \Big) \Big] dt \bigg]  \nonumber   \\ 
& - \frac{1}{\varepsilon} \ | g_2 - g_1| \ \E \bigg[\int_0^{\tau^{\star}_{\varepsilon}} e^{- \rho t}  \X^{x,y+\varepsilon}_t   \left( \pi^{y+\varepsilon}_t - \pi^{y}_t  \right)  dt \bigg]    \\ 
%= & \frac{1}{\varepsilon} \ \E \bigg[\int_0^{\tau^{\star}_{\varepsilon}} e^{- \rho t} \X^{x,y+\varepsilon}_t   h'' \big( L_t^{\varepsilon}\big)   \left(  \X^{x,y+\varepsilon}_t  - \X^{x,y}_t   \right) dt \bigg]   \\   
%& + \frac{1}{\varepsilon} \ \E \bigg[\int_0^{\tau^{\star}_{\varepsilon}} e^{- \rho t} \Big(  \X^{x,y+\varepsilon}_t  -   \X^{x,y}_t   \Big)  \Big[  h'\big( \X^{x,y}_t \big) -  \Big( \rho - \beta_2 - \pi^{y}_t (g_2 - g_1)   \Big) \Big] dt \bigg]  \nonumber   \\ 
%& - \frac{1}{\varepsilon} \ | g_2 - g_1| \ \E \bigg[ \int_0^{\tau^{\star}_{\varepsilon}} e^{- \rho t}  \X^{x,y+\varepsilon}_t   \left( \pi^{y+\varepsilon}_t - \pi^{y}_t  \right)  dt \bigg] , \nonumber \\
\geq & \frac{1}{\varepsilon} \ \E \bigg[\int_0^{\tau^{\star}_{\varepsilon}} e^{- \rho t} \Big(  \X^{x,y+\varepsilon}_t  -   \X^{x,y}_t   \Big)  \Big(\X^{x,y+\varepsilon}_t   h'' \big( L_t^{\varepsilon}\big) + h'\big( \X^{x,y}_t \big) \Big) dt \bigg]  \nonumber   \\ 
& - \frac{1}{\varepsilon} \ | g_2 - g_1| \ \E \bigg[ \int_0^{\tau^{\star}_{\varepsilon}} e^{- \rho t}  \X^{x,y+\varepsilon}_t   \left( \pi^{y+\varepsilon}_t - \pi^{y}_t  \right)  dt \bigg]. \nonumber 
\end{align}
In the last inequality we have used that $\rho - \beta_2 - \pi^{y}_t (g_2 - g_1) \geq 0$, since $\rho>\beta_2$ by Assumption \ref{ass:rho}, that $g_2-g_1<0$, and that $\X^{x,y+\varepsilon}_t  \leq   \X^{x,y}_t$. 

Define now $\Delta \pi^y_t:=\frac{1}{\varepsilon}(\pi^{y+\varepsilon}_t - \pi^{y}_t)$, $t\geq0$, and notice that, by using the second equation in \eqref{syst:XYhat}, we can write
\begin{equation*}
\label{Deltapi}
d\Delta\pi^y_t = -(\lambda_1 + \lambda_2) \Delta\pi^y_t dt + \Delta\pi^y_t\Big(1- \pi^{y+\varepsilon}_t - \pi^y_t\Big)\Big[\frac{(g_2-g_1)}{\sigma}dI_t + (\alpha_1-\alpha_2)dI^1_t\Big], \quad t>0, 
\end{equation*}
with $\Delta\pi^y_0=1$. With the help of It\^o's formula, it can be easily shown that 
\begin{equation}
\label{solDeltapi}
\Delta\pi^y_t = \exp\Big\{-(\lambda_1 + \lambda_2)t -\theta^2\int_0^t \big(1- \pi^{y+\varepsilon}_s - \pi^y_s\big)^2ds + \int_0^t \big(1- \pi^{y+\varepsilon}_s - \pi^y_s\big)\Big[\frac{(g_2-g_1)}{\sigma}dI_s + (\alpha_1-\alpha_2)dI^1_s\Big]\Big\},
\end{equation}
with $\theta^2:=\frac{1}{2}\big[\frac{(g_2-g_1)^2}{\sigma^2} + (\alpha_1-\alpha_2)^2\big]$, solves the previous stochastic differential equation.

Also, by \eqref{sol:X} and simple algebra, 
\begin{equation}
\label{DeltaX}
\frac{1}{\varepsilon}  \left(  \X^{x,y+\varepsilon}_t  - \X^{x,y}_t   \right) 
%&  =  & x e^{(\beta_2 - \frac{1}{2}\sigma^2)t + \sigma I_t + (g_2-g_1) \int_0^t \pi^y_s ds} \left( \frac{ e^{ \varepsilon (g_2-g_1) \int_0^t \Delta\pi^{y}_s ds}  -  1 }{\varepsilon} \right) \\
= \X^{x,y}_ t \left(\frac{ e^{\varepsilon(g_2-g_1) \int_0^t \Delta\pi^{y}_s ds}  -  1 }{\varepsilon} \right).
\end{equation}

Employing the definition of $\Delta \pi^y_t$ and \eqref{DeltaX} in \eqref{stimaC1y-a}, and using that $\X^{x,y+\varepsilon}_t \leq \X^{x,y}_t$, one finds
\begin{align}
\label{ineq3}
0  \ge &\frac{v(x,y+\varepsilon) - v(x,y)}{\varepsilon} \geq \E \bigg[\int_0^{\tau^{\star}_{\varepsilon}} e^{- \rho t} \X^{x,y}_t  \Big( \frac{ e^{\varepsilon (g_2-g_1) \int_0^t \Delta \pi^{y}_s ds}  -  1 }{\varepsilon} \Big) \cdot \nonumber \\   
&  \cdot  \Big( \X^{x,y}_t   h'' \big( L_t^{\varepsilon}\big)  +  h'\big( \X^{x,y}_t \big)\Big)  dt \bigg] - | g_2 - g_1| \ \E\bigg[\int_0^{\tau^{\star}_{\varepsilon}} e^{- \rho t}  \X^{x,y}_t   \Delta\pi^y_t  dt\bigg].    
\end{align}

We now aim at taking limits as $\varepsilon \downarrow 0$ in \eqref{ineq3}. To this end, notice that $\Delta \pi^y_t \rightarrow Z^y_t$ a.s.\ for all $t\geq0$, as $\varepsilon \downarrow 0$, where, by Theorem $39$ in Chapter V.7 of \cite{Pr04}, $(Z^y_t)_{t\geq0}$ is the unique strong solution to 
\begin{equation}
\label{Z:sde}
d  Z_t^{y} = - (\lambda_1+\lambda_2) Z_t^{y}  dt + Z_t^{y} (1 - 2 \pi_s^y ) \left[\frac{(g_2-g_1)}{\sigma}dI_t + (\alpha_1-\alpha_2)dI^1_t\right], \quad t>0,
\end{equation}
with $Z_0^{y}  =  1$. Then, if we were allowed to invoke the dominated convergence theorem when taking limits as $\varepsilon \downarrow 0$ in \eqref{ineq3}, we would obtain that
\begin{align}
\label{ineq-final}
0  \ge &\, v_y(x,y) \geq (g_2-g_1)\E \bigg[\int_0^{\tau^{\star}} e^{- \rho t} \X^{x,y}_t \Big(\int_0^t Z^{y}_s ds\Big) \Big(  \X^{x,y}_t   h'' \big(\X^{x,y}_t\big)  +  h'\big(\X^{x,y}_t \big) \Big)  dt \bigg] \nonumber \\
& - | g_2 - g_1| \ \E\bigg[\int_0^{\tau^{\star}} e^{- \rho t}  \X^{x,y}_t   Z^y_t  dt\bigg],  
\end{align}
upon recalling that $v\in C^{2}(\mathcal{C})$. Therefore, letting $(x_o,y_o)$ be any arbitrary point belonging to $\partial \mathcal{C}$, by taking limits in \eqref{ineq-final} as $(x,y) \rightarrow (x_o,y_o)$, by the dominated convergence theorem and thanks to Proposition \ref{prop:regularity} we obtain that 
$$0 \geq \limsup_{(x,y) \rightarrow (x_o,y_o) \in \partial \mathcal{C}}v_y(x,y) \geq  \liminf_{(x,y) \rightarrow (x_o,y_o) \in \partial \mathcal{C}}v_y(x,y) \geq 0,$$ thus proving that $v_y$ is continuous across $\partial \mathcal{C}$.

In order to complete the proof it thus only remains to show that the dominated convergence theorem can indeed be applied when taking limits as $\varepsilon \downarrow 0$ in \eqref{ineq3}. This is what we are going to show in the two following technical steps.
\vspace{0.1cm}

\emph{Step 1.} To prove that the dominated convergence theorem can be invoked when taking $\varepsilon \downarrow 0$ in the first expectation on the right-hand side of \eqref{ineq3}, we set
$$\Lambda_{\varepsilon}:= \int_0^{\tau^{\star}_{\varepsilon}} e^{- \rho t} \X^{x,y}_t \Big( \frac{ e^{\varepsilon (g_2-g_1) \int_0^t \Delta \pi^{y}_s ds}  -  1 }{\varepsilon} \Big)\Big(  \X^{x,y}_t   h''\big( L_t^{\varepsilon}\big)  +  h'\big( \X^{x,y}_t \big)\Big)  dt,$$ 
and we show that the family of random variables $\{\Lambda_{\varepsilon}, \varepsilon \in (0,1-y)\}$ is bounded in $L^2(\Omega,\mathcal{F},\P)$, hence uniformly integrable.

Notice that by Assumption \ref{ass:casestudy}-(ii) and the fact that $L_t^{\varepsilon} \leq \X^{x,y}_t$ a.s., one has a.s.\ for any $t\geq0$
$$\X^{x,y}_t\Big[\X^{x,y}_t  h'' \big(L_t^{\varepsilon}\big)  +  h' \big(\X^{x,y}_t\big) \Big] \leq  \widehat{K}\Big(1 + \big(\X^{x,y}_t\big)^{\gamma \vee 2}\Big),$$
for some constant $\widehat{K}>0$ (independent of $\varepsilon$), so that by Jensen's inequality 
\begin{align*}
& \big|\Lambda_{\varepsilon}\big|^2 \leq \frac{2 \widehat{K}^2}{\rho^2} \int_0^{\infty} \rho e^{- \rho t} \Big(\frac{1 - e^{\varepsilon (g_2-g_1) \int_0^t \Delta \pi^{y}_s ds}}{\varepsilon} \Big)^2\Big(1 + \big(\X^{x,y}_t\big)^{2\gamma \vee 4}\Big) dt. \nonumber 
\end{align*}
Then, taking expectations and applying H\"older's inequality
\begin{align}
\label{stimaC1y-B}
& \E\Big[\big|\Lambda_{\varepsilon}\big|^2\Big]^{\frac{1}{2}} \leq K'\E\bigg[\int_0^{\infty} e^{-\rho t} \Big(\frac{1- e^{\varepsilon (g_2-g_1) \int_0^t \Delta \pi^{y}_s ds}}{\varepsilon} \Big)^4 dt\bigg]^{\frac{1}{4}}\E\bigg[\int_0^{\infty} e^{- \rho t} \Big(1 + \big(\X^{x,y}_t\big)^{4\gamma \vee 8}\Big) dt\bigg]^{\frac{1}{4}},
\end{align}
for some other constant $K'>0$, independent of $\varepsilon$, that in the following will be varying from line to line.

The standard inequality $1- e^{-x} \leq x$, with $x=\varepsilon (g_1-g_2) \int_0^t \Delta \pi^{y}_s ds \geq 0$, allows us to continue from \eqref{stimaC1y-B} and write
\begin{align}
\label{stimaC1y-C}
& \E\Big[\big|\Lambda_{\varepsilon}\big|^2\Big]^{\frac{1}{2}} \leq K'\E\bigg[\int_0^{\infty} e^{-\rho t} \Big(\int_0^t \Delta \pi^{y}_s ds\Big)^4 dt\bigg]^{\frac{1}{4}}\E\bigg[\int_0^{\infty} e^{- \rho t} \Big(1 + \big(\X^{x,y}_t\big)^{4(\gamma \vee 2)}\Big) dt\bigg]^{\frac{1}{4}}. 
\end{align}

We now treat the two expectations in \eqref{stimaC1y-C} separately. First of all, notice that by Jensen's inequality
\begin{equation}
\label{jensen}
\Big(\int_0^t \Delta \pi^{y}_s ds\Big)^4 = \Big(\frac{1}{t}\int_0^t t \,\Delta \pi^{y}_s ds\Big)^4\leq t^3 \int_0^t \big(\Delta \pi^{y}_s\big)^4 ds.
\end{equation}
Second of all, thanks to the nonnegativity of $(\Delta\pi^y)^4$, we can invoke Fubini-Tonelli's theorem and using also \eqref{jensen}, obtain 
\begin{align}
\label{Fubini1}
& \E\bigg[\int_0^{\infty} e^{-\rho t} \Big(\int_0^t \Delta \pi^{y}_s ds\Big)^4 dt\bigg] \leq \E\bigg[\int_0^{\infty} e^{-\rho t} t^3 \Big(\int_0^t \big(\Delta \pi^{y}_s\big)^4 ds\Big) dt\bigg] \nonumber \\
%&= \frac{1}{\rho^4}\E\bigg[\int_0^{\infty} e^{-\rho s}\big(\rho^3s^3+3\rho^2 s^2+ 6\rho s + 6\big) \big(\Delta \pi^{y}_s\big)^4 ds\bigg] \\
& = \frac{1}{\rho^4}\int_0^{\infty} e^{-\rho s}\big(\rho^3s^3+3\rho^2 s^2+ 6\rho s + 6\big) \E\Big[\big(\Delta \pi^{y}_s\big)^4\Big] ds. 
\end{align}
We now aim at evaluating the expectation in the last integral above.

To accomplish that, notice that by applying It\^o's formula to the process $\xi^y_t:=(\Delta\pi^y_t)^4$, and using \eqref{solDeltapi}, we have for any $t>0$
\begin{equation*}
\label{dynxi}
d\xi^y_t = \xi^y_t\big(-(\lambda_1+\lambda_2) + 12\theta^2(1-\pi^{y+\varepsilon}_t - \pi^y_t)^2\big)dt + 4\xi^y_t(1-\pi^{y+\varepsilon}_t - \pi^y_t)\big[\frac{(g_2-g_1)}{\sigma}dI_s + (\alpha_1-\alpha_2)dI^1_s\big],
\end{equation*}
with $\xi^y_0=1$ and $\theta^2=\frac{1}{2}\big[\frac{(g_2-g_1)^2}{\sigma^2} + (\alpha_1-\alpha_2)^2\big]$. Because $(1-\pi^{y+\varepsilon}_t - \pi^y_t)^2 \leq 2$ a.s.\ for all $t\geq0$, and 
$$\xi^y_t = e^{-(\lambda_1 + \lambda_2)t + 12\theta^2\int_0^t(1-\pi^{y+\varepsilon}_t - \pi^y_t)^2 ds}M^y_t,$$
where $(M^y_t)_{t\geq0}$ is an exponential martingale, it is easy to see that
\begin{equation}
\label{expDeltapi4}
\E\big[(\Delta\pi^y_t)^4\big] \leq e^{-(\lambda_1 + \lambda_2)t + 24\theta^2 t}, \quad t\geq0.
\end{equation}
Using the latter estimate in \eqref{Fubini1}, together with Assumption \ref{ass:rho}, we deduce that 
\begin{equation}
\label{uni-1}
\sup_{\varepsilon\in (0,1-y)}\E\bigg[\int_0^{\infty} e^{-\rho t} \Big(\int_0^t \Delta \pi^{y}_s ds\Big)^4 dt\bigg] < \infty.
\end{equation}

As for the second expectation in \eqref{stimaC1y-C}, Assumption \ref{ass:rho} and standard estimates employing \eqref{sol:X} (together with the fact that $(g_2-g_1)\int_0^t \pi^y_s ds <0$) guarantee that it is finite. Moreover, it is independent of $\varepsilon$. Combining this with \eqref{uni-1} we thus find from \eqref{stimaC1y-C} that
$\sup_{\varepsilon \in (0,1-y)}\E\Big[\big|\Lambda_{\varepsilon}\big|^2\Big]^{\frac{1}{2}} < \infty,$
thus implying that the family of random variables $\{\Lambda_{\varepsilon}, \varepsilon \in (0,1-y)\}$ is bounded in $L^2(\Omega,\mathcal{F},\P)$, hence uniformly integrable.
\vspace{0.1cm}

\emph{Step 2.} We consider the second expectation on the right-hand side of \eqref{ineq3}, and setting
$$\Xi_{\varepsilon}:=\int_0^{\tau^{\star}_{\varepsilon}} e^{- \rho t}  \X^{x,y}_t   \Delta\pi^y_t  dt,$$
we aim at proving that the family of random variables $\{\Xi_{\varepsilon}, \varepsilon \in (0,1-y)\}$ is bounded in $L^2(\Omega,\mathcal{F},\P)$, hence uniformly integrable.

By Jensen's inequality first, and H\"older's inequality then, one finds that
\begin{equation}
\label{stimaexp2-1}
\E\Big[\big|\Xi_{\varepsilon}|^2\Big]^{\frac{1}{2}} \leq \widehat{K}\E\bigg[\int_0^{\infty}e^{-\rho t} \big(\X^{x,y}_t \big)^4 dt\bigg]^{\frac{1}{4}} \E\bigg[\int_0^{\infty}e^{-\rho t} \big(\Delta \pi^y_t \big)^4 dt\bigg]^{\frac{1}{4}},
\end{equation}
for some $\widehat{K}>0$, independent of $\varepsilon$.

The first expectation on the right-hand side of \eqref{stimaexp2-1} is finite thanks to Assumption \ref{ass:rho} and standard estimates employing \eqref{sol:X} (together with the fact that $(g_2-g_1)\int_0^t \pi^y_s ds <0$). Moreover, it is independent of $\varepsilon$.

As for the second one, by interchanging expectation and time integral by Fubini-Tonelli's theorem, and using \eqref{expDeltapi4}, we obtain
$$\E\bigg[\int_0^{\infty}e^{-\rho t} \big(\Delta \pi^y_t \big)^4 dt\bigg]^{\frac{1}{4}} \leq \frac{1}{(\rho + \lambda_1 + \lambda_2 - 24\theta^2)^{\frac{1}{4}}},$$
due to Assumption \ref{ass:rho}. We therefore conclude that (cf.\ \eqref{stimaexp2-1}) $\sup_{\varepsilon \in (0,1-y)}\E\Big[\big|\Xi_{\epsilon}|^2\Big]^{\frac{1}{2}} < \infty$, thus completing the proof.
\end{proof}

The previous theorem in particular implies the so-called \emph{smooth-fit} property, a well known optimality principle in optimal stopping theory. Moreover, by standard arguments based on the strong Markov property of $(\X,\pi)$ (see Chapter III in \cite{PS}) it follows from the results collected so far that the couple $(v,d)$ solves the free-boundary problem
\begin{equation}
\label{FBP}
\left\{
\begin{array}{ll}
\big(\mathbb{L} - \rho\big)v(x,y) =  - xh'(x) & \text{on} \quad \mathcal{C},\\[+6pt]
v(x,y) = x & \text{on} \quad \mathcal{S},\\[+6pt]
v_x(x,y) = 1 & \text{at} \quad x= d(y),\,\,y \in (0,1), \\[+6pt]
v_y(x,y) = 0 & \text{at} \quad x= d(y),\,\,y \in (0,1), 
\end{array}
\right.
\end{equation}
with $v \in C^{2}(\mathcal{C})$.

An important consequence of Theorem \ref{thm:C1} is the following.
\begin{proposition}
\label{thm:fbLip}
One has that $y \mapsto d(y)$ is continuous on $[0,1]$.
\end{proposition}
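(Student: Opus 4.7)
My argument will be by contradiction, combining the $C^1$ regularity and the smooth-fit property from Theorem~\ref{thm:C1} with the strict lower bound on $d$ given in Proposition~\ref{prop:freeboundary}-(iii). Since Proposition~\ref{prop:freeboundary}-(ii) already yields that $d$ is increasing and left-continuous on $[0,1]$, only right-continuity remains. Suppose toward a contradiction that $d(y_0+):=\lim_{y\downarrow y_0}d(y)>d(y_0)$ at some $y_0\in[0,1)$. Fix $x_1<x_2$ in $(d(y_0),d(y_0+))$ and $\delta>0$ so small that the open rectangle $R:=(x_1,x_2)\times(y_0,y_0+\delta)$ lies entirely inside $\mathcal{C}$; this is possible because, by monotonicity of $d$, $d(y)\geq d(y_0+)>x_2$ for every $y>y_0$. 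Lemma~\ref{lem:vC2cont} then gives $v\in C^2(R)$ with $(\mathbb{L}-\rho)v=-xh'(x)$ throughout $R$, while the same monotonicity forces $v(x,y)=x$ on the whole strip $[x_1,x_2]\times(0,y_0]$, since $d(y)\leq d(y_0)<x_1$ there.

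I would then set $u:=x-v\geq 0$ and read off, directly from $(\mathbb{L}-\rho)v=-xh'(x)$, that
$$(\mathbb{L}-\rho)u(x,y)=x\bigl[h'(x)-(\rho-\beta_2-(g_2-g_1)y)\bigr]\qquad\text{in }R.$$
By Proposition~\ref{prop:freeboundary}-(iii), $d(y_0)\geq (h')^{-1}(\rho-\beta_2-(g_2-g_1)y_0)$, and the strict convexity of $h$ (hence the strict monotonicity of $h'$) yields $h'(x)>\rho-\beta_2-(g_2-g_1)y_0$ for every $x>d(y_0)$. After possibly shrinking $\delta$, continuity then gives $(\mathbb{L}-\rho)u\geq c>0$ throughout $R$ for some constant $c$. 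On the other hand, the global $C^1$ regularity of $v$ (Theorem~\ref{thm:C1}), together with $v\equiv x$ on the strip $[x_1,x_2]\times(0,y_0]$, imposes on the bottom edge the triple vanishing
$$u(x,y_0)=u_x(x,y_0)=u_y(x,y_0)=0,\qquad x\in[x_1,x_2].$$

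The contradiction is then extracted from the incompatibility between the strict positivity $(\mathbb{L}-\rho)u\geq c>0$ in $R$ and the simultaneous vanishing of $u$, $u_x$, $u_y$ on the open bottom segment. My preferred route is an \emph{integrated formulation}: integrating the PDE in $x$ over $(x_1,x_2)$ and using integration by parts together with the bottom-edge data $v=x$, $v_x=1$ produces an ODE in $y$ for $\widetilde{I}(y):=\int_{x_1}^{x_2}u(x,y)\,dx$ on $(y_0,y_0+\delta)$, with limiting data $\widetilde{I}(y_0+)=\widetilde{I}'(y_0+)=0$ inherited from the continuity of $v$ and $v_y$. Passing to the limit $y\downarrow y_0$ in this ODE identifies $A(y_0)\,\widetilde{I}''(y_0+)$ with the strictly positive quantity $\int_{x_1}^{x_2}x[h'(x)-(\rho-\beta_2-(g_2-g_1)y_0)]\,dx$, and a Taylor-Fubini comparison against the smooth-fit identity $v_y(x,y_0)=0$ holding uniformly for $x\in[x_1,x_2]$ yields the required quantitative inconsistency.

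The main obstacle is precisely this last identification. Since only $v\in C^1(\mathcal{O})$ is available globally (with $v\in C^2$ strictly inside $\mathcal{C}$), pointwise second-order limits across the horizontal segment $\{y=y_0\}\cap[d(y_0),d(y_0+)]$ of $\partial\mathcal{C}$ are delicate: the $\mathcal{S}$-side gives $v_{xx}\equiv v_{yy}\equiv 0$, whereas the $\mathcal{C}$-side limits are dictated by the strictly positive source through the PDE. Rigorously reconciling these two one-sided behaviours — either via a carefully localised Hopf boundary-point argument on $u$, or through the integrated $\widetilde{I}$-formulation that trades pointwise second derivatives for one-dimensional ODE quantities continuous in $y$ — is the technical crux of the proof.
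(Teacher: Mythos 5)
Your approach is genuinely different from the paper's. The paper changes measure (Girsanov with $d\PP/d\P|_{\mathcal F_t}=e^{-\frac12\sigma^2 t+\sigma I_t}$), rewrites $v=x-\widehat{V}$ with $\widehat{V}$ a supremum over stopping times, verifies three hypotheses --- $\widehat{V}_x\le 0$, $\partial_x\big(\widehat{H}/(\theta^2y^2(1-y)^2)\big)<0$, and horizontal smooth fit (the $C^1$ property you also use) --- and then invokes Peskir's Theorem 10 (2018) as a black box. You instead attempt a self-contained PDE argument by contradiction. Your setup is sound: the rectangle $R\subset\mathcal{C}$ above the hypothetical jump, the identity $v\equiv x$ on the strip below, the triple vanishing $u=u_x=u_y=0$ on the bottom edge (this follows from Theorem~\ref{thm:C1}), and the bound $d(y_0)\ge(h')^{-1}(\rho-\beta_2-(g_2-g_1)y_0)$ from the proof of Proposition~\ref{prop:freeboundary}-(iii), which yields $(\mathbb{L}-\rho)u\ge c>0$ on $R$ after shrinking $\delta$ --- all of this is correct.

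However, you have honestly flagged the final step as unresolved, and I believe the gap is genuine: the data you have assembled are mutually \emph{consistent}, not contradictory. Pointwise strict positivity of the source combined with the first-order vanishing $u=u_x=u_y=0$ on the bottom edge is exactly what one expects from a positive source driving $u$ off zero quadratically; there is no tension. Concretely, your integrated quantity $\widetilde I(y):=\int_{x_1}^{x_2}u(x,y)\,dx$ is $C^1$ with $\widetilde I(y_0)=\widetilde I'(y_0)=0$ and $\widetilde I\equiv0$ for $y\le y_0$, and the "ODE" you would derive merely identifies a positive right-limit $\widetilde I''(y_0+)>0$; a $C^1$ function can have a jump in its second derivative, so no "Taylor--Fubini inconsistency" arises. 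The same issue defeats a naive Hopf boundary-point argument: Hopf applies to supersolutions, whereas here $u$ is a strict \emph{sub}solution ($\mathbb{L}u>\rho u\ge 0$), which is the wrong sign to conclude $u_y(\cdot,y_0)>0$ from a boundary minimum. What is missing is precisely the \emph{monotonicity in $x$} of the scaled source $\widehat{H}(x,y)/(\theta^2y^2(1-y)^2)$ (the second hypothesis the paper checks before citing Peskir's theorem); it is this structural $x$-monotonicity, not mere pointwise positivity along the hypothetical jump segment, that drives the contradiction in all known proofs of boundary continuity of this type. As written, your plan reduces the problem to the right configuration but does not supply the additional ingredient that actually forces the contradiction.
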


\begin{proof}
Define the probability measure $\PP$ on $(\Omega,\mathcal{F})$ such that $\frac{d\PP}{d\P}\Big|_{\mathcal{F}_t} = e^{-\frac{1}{2}\sigma^2 t + \sigma I_t}$, $t \geq 0$.
Such a measure is equivalent to $\P$ on $\mathcal{F}_t$, and defining $\widehat I_t:=I_t - \sigma t$, by Girsanov's theorem the latter is a standard $\mathbb{H}$-Brownian motion under $\PP$.

By a change of measure (see, e.g., Section 12 in Chapter IV of \cite{PS}) it is then not difficult to see that $v$ as in \eqref{valueOS-bis} is such that $v(x,y) := x - \widehat{V}(x,y)$, where, for any $(x,y) \in \mathcal{O}$, we have set 
\begin{equation}
\label{def:Vhat}
\widehat{V}(x,y):= \sup_{\tau \geq 0} \widehat{\E}_{(x,y)}\bigg[\int_0^{\tau} e^{-(\rho - \beta_2)t + (g_2-g_1)\int_0^t \pi_s ds } \widehat{H}(\X_t,\pi_t) dt\bigg],
\end{equation}
with $ \widehat{H}(x,y):= \Big(\rho - \beta_2 -(g_2-g_1)y - h'(x)\Big)$. 
In \eqref{def:Vhat} above $\widehat{\E}_{(x,y)}$ denotes the expectation conditioned on the fact that $(\X_0,\pi_0)=(x,y)$ $\PP$-a.s. Since $\{(x,y) \in \mathcal{O}:\, v(x,y) \geq x\} = \{(x,y) \in \mathcal{O}:\, \widehat{V}(x,y) \leq 0\}$, $d(\,\cdot\,)$ is the optimal stopping boundary for the problem with value $\widehat{V}$ as well. 

In order to prove the continuity of $d(\,\cdot\,)$, we now aim at applying Theorem 10 in \cite{Peskir2018} for problem \eqref{def:Vhat}. Notice that $\widehat{V}_x \leq 0$ on $\mathcal{O}$ since $x \mapsto h(x)$ is strictly convex. Moreover, recalling $\theta^2=\frac{1}{2}[(\alpha_1-\alpha_2)^2 + \frac{(g_2-g_1)^2}{\sigma^2}]$, we have $\partial_x\left(\frac{\widehat{H}}{\theta^2 y^2(1-y)^2}\right)<0$ on $\mathcal{O}$ thanks, again, to the strict convexity of $h$.
Also, $\widehat{V}_y$ is continuous across the boundary, due to the $C^1$-property shown in Theorem \ref{thm:C1} for $v=x-\widehat{V}$; hence, the horizontal smooth-fit property holds.

We can therefore apply Theorem 10 of \cite{Peskir2018} (upon noticing that in \cite{Peskir2018} $x$ is the horizontal axis and $y$ is the vertical one, while, in our paper, $x$ is the vertical axis and $y$ is the horizontal one), and conclude that $d$ cannot have discontinuities of the first kind at any point $y\in [0,1)$.
Finally, $d$ is also continuous at $y=1$ since it is left-continuous by Proposition \ref{prop:freeboundary}-(ii).
\end{proof}

%%%%%%%%%%%%%%%%%%%%%%%%%%%%%%%%%%%%%%%%%%%%%%%%%%%%%%%%%%%%%%%%%%%%%%%%%

\subsection{The Optimal Control for Problem (P3)}
\label{sec:optcontr}

In this section, we provide the form of the optimal debt reduction policy. It is given in terms of the free boundary studied in the previous section.

For $d$ as in \eqref{freeboundary}, introduce under $\P_{(x,y)}$ the nondecreasing process
\begin{equation}
\label{OC-candidate}
\displaystyle \overline{\nu}^{\star}_t = \Big[x - \inf_{0\leq s \leq t}\Big(d\big(\pi_s\big)e^{-(\beta_2 - \frac{1}{2}\sigma^2)s - \sigma I_s - (g_2-g_1)\int_0^s \pi_u du}\Big)\Big] \vee 0, \quad t \geq 0,
\end{equation}
with $\overline{\nu}^{\star}_{0^-}=0$,  and then the process
\begin{equation} 
\label{nustar}
\nu^{\star}_t:=\int_{0}^t e^{-(\beta_2-\frac{1}{2}\sigma^2)s - \sigma I_s - (g_2-g_1)\int_0^s \pi_u du} d\overline{\nu}^{\star}_s, \quad t \geq 0, \qquad \nu^{\star}_{0^-}=0.
\end{equation}

Notice that since $\overline{\nu}^{\star}_t \leq x$ a.s.\ for all $t\geq 0$, and $t \mapsto \overline{\nu}^{\star}_t$ is nondecreasing, it does follows from \eqref{nustar} that $\nu^{\star}$ is admissible. Moreover, $t \mapsto \overline{\nu}^{\star}_t$ is continuous (with the exception of a possible initial jump at initial time), due to the continuity of $y \mapsto d(y)$ and to that of $t \mapsto I_t$, $t \mapsto \pi_t$, and $t \mapsto \int_0^t \pi_s ds$.

\begin{theorem}
\label{teo:ver-2}
Let $\widetilde{V}(x,y):= \int_{0}^{x}\frac{1}{z}v(z,y)dz$, $(x,y)\in [0,\infty) \times [0,1]$. Then one has $\widetilde{V} = V$ on $[0,\infty) \times [0,1]$, and $\nu^{\star}$ as in \eqref{nustar} is optimal for Problem \textbf{(P3)}.
\end{theorem}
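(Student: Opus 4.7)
The strategy is to deduce the statement as a direct specialisation of the general verification theorem (Theorem \ref{teo:verifico}) together with the Markovian reformulation of the auxiliary stopping problem (Remark \ref{rem:OS-Markov-case}). First, I would verify that Problem \textbf{(P3)} is indeed a legitimate instance of the framework of Section \ref{sec:reduction}. Under Assumption \ref{ass:casestudy} the cost $h$ is state-independent, $C^2$, strictly convex, nondecreasing on $\mathbb{R}_+$ with $h(0)=0$, and has polynomial growth; combined with Assumption \ref{ass:rho} (which guarantees $\rho>\gamma\beta_2+\tfrac12\sigma^2\gamma(\gamma-1)$ and $\rho>\beta_2+\tfrac12\sigma^2$), Assumption \ref{ass:h} is fulfilled. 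Since $c\equiv 0$ and $\alpha(q,i)=\alpha(i)$, point (2) of Remark \ref{rem:particularcases} together with Remark \ref{rem:no-eta} show that the triplet reduces to the Markov pair $(X^{x,\nu},\pi)$, independently of the initial datum $q$; thus Theorem \ref{teo:verifico} applies and yields $V(x,\underline y)=\int_0^x U(z,\underline y)\,dz$, where $U(z,\underline y)$ is the value at the initial datum $(z,\underline y)$ of the optimal stopping problem in \eqref{valueOS-general1}.

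Second, I would identify the function $U$ with the two-dimensional stopping value $v$ studied in Section \ref{sec:relatedOS}. In our two-regime setting $\widehat h(x,\underline\pi)=h(x)$, hence $\widehat h_x=h'$, and the integrand $X^{1,0}_s\widehat h_x(X^{z,0}_s,\underline\pi_s)$ in \eqref{valueOS-general1} rewrites, using $X^{z,0}_s=z\,X^{1,0}_s=\X^{z,y}_s$ (cf.\ \eqref{sol:X}), as $\tfrac{1}{z}\X^{z,y}_s h'(\X^{z,y}_s)$. The terminal payoff $X^{1,0}_\tau$ likewise equals $\tfrac{1}{z}\X^{z,y}_\tau$. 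Factoring out $1/z$ and comparing with \eqref{valueOS} gives the identity $U(z,y)=v(z,y)/z$ for every $(z,y)\in\mathcal O$, which is precisely Remark \ref{rem:OS-Markov-case} in the present setting. Integrating in $z$ and invoking the step above delivers
\[
V(x,y)\;=\;\int_0^x U(z,y)\,dz\;=\;\int_0^x\frac{v(z,y)}{z}\,dz\;=\;\widetilde V(x,y),
\]
for $(x,y)\in(0,\infty)\times(0,1)$. The extension to $y\in\{0,1\}$ follows from continuity of $v$ in $y$ (Proposition \ref{prop:continuousOS}), and the case $x=0$ is trivial since both sides vanish.

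Third, I would show that the candidate control \eqref{nustar} coincides with the canonical optimiser \eqref{nustar-general} of Theorem \ref{teo:verifico}. From Remark \ref{rem:OS-Markov-case} the optimal stopping time for $U(z,y)$ is $\tau^*(z)=\inf\{t\geq 0:\X^{z,y}_t\geq d(\pi^y_t)\}$, and from \eqref{sol:X} this is equivalent to $\tau^*(z)=\inf\{t\geq 0:z\geq d(\pi^y_t)/\X^{1,y}_t\}$. Consequently the right-continuous inverse in \eqref{candidate-general} reads
\[
\overline\nu^*_t\;=\;\sup\{\alpha\in[0,x]:\tau^*(x-\alpha)\leq t\}\;=\;\Big[x-\inf_{0\leq s\leq t}\frac{d(\pi^y_s)}{\X^{1,y}_s}\Big]\vee 0,
\]
which, upon inserting the explicit expression of $\X^{1,y}_s$, matches \eqref{OC-candidate} verbatim. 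The prescription \eqref{nustar-general}, with $X^{1,0}_s=\X^{1,y}_s$, then reduces to \eqref{nustar}, so $\nu^\star=\nu^*$. Admissibility of $\nu^\star$ follows from the bound $\overline\nu^\star_t\leq x$ and its monotonicity (together with the continuity of $d$ from Proposition \ref{thm:fbLip}), and optimality and uniqueness are inherited from the corresponding conclusions of Theorem \ref{teo:verifico}.

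The main obstacles are bookkeeping rather than conceptual: one must carefully check that the regularity and integrability standing assumptions of Section \ref{sec:verificationthm} are indeed met in the case study (this is what Assumptions \ref{ass:casestudy} and \ref{ass:rho} are designed for), and that the exponential change of time-scale implicit in passing from $\overline\nu^\star$ to $\nu^\star$ is consistent with the generalised-inverse construction \eqref{taunu}–\eqref{taunumeno}. Once these are in place, the theorem follows as a translation of the abstract result.
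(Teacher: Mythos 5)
Your proof is correct and follows essentially the same route as the paper's: you identify $U(z,y)=v(z,y)/z$ (which is what Remark~\ref{rem:OS-Markov-case} records in general), then show that $\overline{\nu}^{\star}$ of \eqref{OC-candidate} is the right-continuous inverse of the optimal stopping time $\tau^{\star}(\cdot,y)$, and invoke Theorem~\ref{teo:verifico} to conclude; the paper does exactly this, phrasing the last step as the chain of equivalences $\tau^{\star}(z,y)\le t\Longleftrightarrow\overline{\nu}^{\star}_t\ge x-z\Longleftrightarrow\tau^{\overline{\nu}^{\star}}_{+}(z)\le t$. One small bookkeeping remark: your derivation gives $\nu^{\star}_t=\int_0^t \X^{1,y}_s\,d\overline{\nu}^{\star}_s$, i.e.\ the exponential factor should be $e^{+(\beta_2-\frac{1}{2}\sigma^2)s+\sigma I_s+(g_2-g_1)\int_0^s\pi_u\,du}$, which is what \eqref{nustar-general} prescribes and what makes $X^{x,y,\nu^{\star}}_t=\X^{1,y}_t\big[x-\overline{\nu}^{\star}_t\big]$ come out right; equation \eqref{nustar} as printed carries a spurious minus sign in the exponent, so your statement that \eqref{nustar-general} ``reduces to \eqref{nustar}'' is correct in spirit but silently corrects a typographical sign error in the paper rather than matching it verbatim.
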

\begin{proof}

Recall $U=U_0$ as in \eqref{valueOS-general}, and notice that in our Markovian setting one has $\frac{1}{z}v(z,y) = U(z)$. By the proof of Theorem \ref{teo:verifico} it suffices to show that the right-continuous inverse of the stopping time $\tau^{\star}(z,y)= \ \inf\{t\geq 0 \ | \ \X^{z,y}_t \geq d(\pi^y_t) \}$ (which is optimal for $v(z,y)$, cf.\ \eqref{OStime}) coincides (up to a null set) with $\overline{\nu}^{\star}$.

Then, recall \eqref{taunumeno} from the proof of Theorem \ref{teo:verifico}, fix $(x,y) \in (0,\infty) \times (0,1)$, take $t\geq 0$ arbitrary, and notice that by \eqref{OStime} we have $\P_{(z,y)}$-a.s.~the equivalences
\begin{align*}
&\tau^{\star}(z,y) \leq t  \  \Longleftrightarrow \ \X_{\theta} \geq d(\pi_{\theta}) \ \mbox{ for some } \theta \in [0,t] \ \Longleftrightarrow \\
& z \geq e^{-(\beta_2 - \frac{1}{2}\sigma^2)\theta - \sigma I_{\theta} - (g_2-g_1)\int_0^{\theta}\pi_u du} d(\pi_{\theta}) \ \mbox{ for some } \theta \in [0,t] \Longleftrightarrow \\
& \Big[x - \inf_{0\leq s \leq t}\Big( d(\pi_s)e^{-(\beta_2 - \frac{1}{2}\sigma^2)s - (g_2-g_1)\int_0^{s}\pi_u du - \sigma I_{s}}\Big)\Big] \vee 0 \geq x - z\ \Longleftrightarrow\ \overline{\nu}^{\star}_t \geq x - z \Longleftrightarrow \\
& \tau^{\overline{\nu}^{\star}}_{+}(z) \leq t.
\end{align*}
Hence, $\tau^{\overline{\nu}^{\star}}_{+}(z)=\tau^{\star}(z,y)$ a.s., and $\overline{\nu}^{\star}_{\cdot}$ is the right-continuous inverse of $\tau^{\star}(\cdot,y)$. Since $\overline{\nu}^{\star}$ is admissible, by arguing as in \emph{Step 2} of the proof of Theorem \ref{teo:verifico} the claim follows.
\end{proof}

Notice that the equation of $X^{x,y,\nu}$ in the formulation of Problem {\textbf{(P3)}}, and \eqref{nustar}, yield
$$X^{x,y,\nu^{\star}}_t = e^{(\beta_2-\frac{1}{2}\sigma^2)t + (g_2-g_1)\int_0^t \pi^y_s ds + \sigma I_t}\big[x - \overline{\nu}^{\star}_t\big],$$
which, with regard to \eqref{OC-candidate}, shows that 
\begin{equation}
\label{cond1}
0 \leq X^{x,y,\nu^{\star}}_t \leq d(\pi^y_t), \qquad t \geq 0,\,\,\mathbb{P}-a.s.
\end{equation}
Moreover, it is easy to see that we can express $\overline{\nu}^{\star}$ of \eqref{OC-candidate} as
\begin{equation}
\label{cond2}
\overline{\nu}^{\star}_t = \sup_{0\leq u \leq t} \Big(\frac{X^{x,y,0}_s - d(\pi^y_s)}{X^{1,y,0}_s}\Big) \vee 0, \qquad \overline{\nu}^{\star}_{0^-}=0.
\end{equation}

The previous equations allow us to make some remarks about the optimal debt management policy of our problem.
\begin{itemize}
\item[(i)] If at initial time the level of the debt ratio $x$ is above $d(y)$, then an immediate lump sum reduction of amplitude $(x-d(y))$ is optimal. 
\item[(ii)] At any $t\geq 0$, it is optimal to keep the debt ratio level below the belief-dependent ceiling $d$. 
\item[(iii)] If the level of the debt ratio at time $t$ is strictly below $d(\pi_t)$, there is no need for interventions. The government should intervene to reduce its debt only at those (random) times $t$ at which the debt ratio attempts to rise above $d(\pi_t)$. These interventions are then minimal, in the sense that $(X^{x,y,\nu^{\star}},\pi^y,\nu^{\star})$ solves a Skorokhod reflection problem at the free boundary $d$.
\item[(iv)] Recall that the debt ceiling $d$ is an increasing function of the government's belief that the economy is enjoying a phase of fast growth. Then, with regard to the previous description of the optimal debt reduction rule, we have that the more the government believes that the economy is in a good shape, the more the fiscal space is, and the less strict the optimal debt reduction policy should be. 
\end{itemize}

%%%%%%%%%%%%%%%%%%%%%%%%%%%%%%%%%%%%%%%%%%%%%%%

\subsection{Regularity of the Value Function of Problem (P3) and Related HJB Equation}
\label{sec:regularityV}

Combining the results collected so far, we are now able to prove that the value function $V$ of control Problem \textbf{(P3)} is a twice-continuously differentiable function. As a byproduct, $V$ is a classical solution to the corresponding Hamilton-Jacobi-Bellman (HJB) equation.

By Theorem \ref{teo:ver-2} we know that $V(x,y)= \int_{0}^{x}\frac{1}{z}v(z,y)dz$, for all $(x,y)\in \overline{\mathcal{O}}:=[0,\infty) \times [0,1]$. Hence, thanks to Theorem \ref{thm:C1} and to the dominated convergence theorem, we immediately obtain the following result.

\begin{lemma}
\label{lem:regV}
One has that $V \in C^1(\mathcal{O}) \cap C(\overline{\mathcal{O}})$. Moreover, $V_{xx}\in C(\mathcal{O})$, as well as $V_{xy}\in C(\mathcal{O})$.
\end{lemma}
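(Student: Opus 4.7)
The plan is to deduce all the claimed regularity directly from the integral representation
\[
V(x,y) = \int_0^x U(z,y)\,dz, \qquad U(z,y) := \frac{v(z,y)}{z},
\]
provided by Theorem \ref{teo:ver-2}, combining it with the continuity of $v$ (Proposition \ref{prop:continuousOS}) and the $C^1$-regularity of $v$ inside $\mathcal{O}$ (Theorem \ref{thm:C1}). The crucial preliminary observation is that $0\le v(z,y)\le z$ for all $(z,y)\in\mathcal{O}$, which gives $0\le U(z,y)\le 1$. Hence the integrand defining $V$ is bounded and the integral is well defined.

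For continuity on $\overline{\mathcal{O}}$, I would split the analysis into three regions. Inside $\mathcal{O}$, $U$ is continuous by Proposition \ref{prop:continuousOS}(iii), and dominated convergence (with the constant majorant $1$ on any bounded interval $[0,x+1]$) yields joint continuity of $V$. At the vertical boundary $\{x=0\}$ the estimate $0\le V(x,y)\le x$ forces $V(x,y)\to 0 = V(0,y)$. At the horizontal boundaries $y=0$ and $y=1$ one exploits that these points are entrance-not-exit for $\pi$, so the strong solution $(\X^{x,y},\pi^y)$ depends continuously on $y\in[0,1]$ in the a.s.\ sense on compact time intervals; repeating the semicontinuity argument of Proposition \ref{prop:continuousOS}(iii) verbatim extends $v$, and therefore $U$, continuously to $\overline{\mathcal{O}}\setminus\{x=0\}$, after which dominated convergence closes the argument.

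For $V\in C^1(\mathcal{O})$, the fundamental theorem of calculus gives
\[
V_x(x,y) = U(x,y) = \frac{v(x,y)}{x},
\]
which is continuous on $\mathcal{O}$. For $V_y$ I would differentiate under the integral sign, obtaining $V_y(x,y)=\int_0^x v_y(z,y)/z\,dz$. This requires a locally uniform integrable majorant for $v_y(z,y)/z$ near $z=0$. To this end I would invoke the bound \eqref{ineq-final} from the proof of Theorem \ref{thm:C1}(ii), combined with the scaling $\X^{z,y}_t = z\X^{1,y}_t$ and the fact that $h'$ is continuous with $h'(0)\in\mathbb{R}$; this shows that $|v_y(z,y)/z|$ is bounded on any compact subset of $\overline{\mathcal{O}}\setminus\{x=\infty\}$, uniformly in $z$ small. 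Continuity of $V_y$ on $\mathcal{O}$ then follows from the continuity of $v_y/z$ on $\mathcal{O}$ (Theorem \ref{thm:C1}) plus dominated convergence.

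For the second-order statements I would simply differentiate the explicit formula $V_x(x,y)=v(x,y)/x$: since $v\in C^1(\mathcal{O})$ by Theorem \ref{thm:C1}, we get
\[
V_{xx}(x,y) = \frac{v_x(x,y)}{x} - \frac{v(x,y)}{x^{2}}, \qquad V_{xy}(x,y) = \frac{v_y(x,y)}{x},
\]
both continuous on $\mathcal{O}$. The main technical obstacle is the localization step above—securing a dominating bound for $v_y/z$ down to $z=0$ that legitimizes differentiation under the integral—while continuity at the degenerate boundary $\{y=0,1\}$ is routine once the entrance-not-exit nature of these points for $\pi$ is used.
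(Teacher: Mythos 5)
Your proof follows exactly the route the paper intends: the representation $V(x,y)=\int_0^x \tfrac{1}{z}v(z,y)\,dz$ from Theorem~\ref{teo:ver-2}, the $C^1$-regularity of $v$ from Theorem~\ref{thm:C1}, and dominated convergence, with the integrable majorant for $\tfrac{1}{z}v_y(z,y)$ near $z=0$ obtained (as the paper does later in the proof of Proposition~\ref{prop:Vyy}) from the bound \eqref{ineq-final} and the scaling $\X^{z,y}_t=z\,\X^{1,y}_t$. The paper's published proof is a one-liner leaving these verifications implicit, and the details you supply — including $V_x=v/x$, the formulas for $V_{xx}$, $V_{xy}$, and the boundary behaviour at $x=0$ and $y\in\{0,1\}$ — are correct.
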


To take care of the second derivative $V_{yy}$ we follow ideas used in \cite{DeA18}. In particular, we determine the second weak derivative of $V$ (recall that $V_{y}$ is continuous by Theorem \ref{thm:C1}), and we then show that it is a continuous function. This is accomplished in the next proposition.
\begin{proposition}
\label{prop:Vyy}
Let $\theta^2:=\frac{1}{2}[(\alpha_1-\alpha_2)^2 + \frac{(g_2-g_1)^2}{\sigma^2}]$. We have $V_{yy} \in C(\mathcal{O})$ with
\begin{align}
\label{defVyy}
& V_{yy}(x,y)= - \frac{1}{\theta^2 y^2(1-y)^2}\Big[\big(\beta_2 + (g_2-g_1)y - \frac{1}{2}\sigma^2\big)\big(v(x\wedge d(y),y)-v(0+,y)\big) \nonumber \\
& + h\big(x\wedge d(y)\big) + \frac{1}{2}\sigma^2\big(x\wedge d(y)\big) v_x(x\wedge d(y),y)\Big] + \frac{\big(\lambda_2 - (\lambda_1+\lambda_2)y\big)}{\theta^2 y^2(1-y)^2}\bigg(\int_{0}^{x \wedge d(y)} \frac{1}{z} v_y(z,y) dz\bigg)  \\
& - \frac{\rho}{\theta^2 y^2(1-y)^2}\bigg(\int_{0}^{x \wedge d(y)} \frac{1}{z} v(z,y) dz\bigg). \nonumber
\end{align}
\end{proposition}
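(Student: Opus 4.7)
The plan is to exploit the representation $V(x,y)=\int_0^x v(z,y)/z\,dz$ from Theorem~\ref{teo:ver-2}, together with the elliptic PDE \eqref{PDEcont} for $v$ on the continuation region $\mathcal{C}$, to derive the explicit expression \eqref{defVyy}, from which the continuity assertion $V_{yy}\in C(\mathcal{O})$ is immediate.

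The first step is to establish the Leibniz-type identity
$$V_y(x,y)=\int_0^{x\wedge d(y)}\frac{v_y(z,y)}{z}\,dz,\qquad (x,y)\in\mathcal{O}.$$
Differentiation under the integral in $V=\int_0^x v(z,y)/z\,dz$ is justified by $v\in C^1(\mathcal{O})$ (Theorem~\ref{thm:C1}), and the upper limit may be replaced by $x\wedge d(y)$ because $v(z,y)=z$, and hence $v_y(z,y)=0$, on the interior of $\mathcal{S}$, while $v_y(d(y),y)=0$ by the smooth-fit condition in \eqref{FBP}. The a priori bound $0\le v(z,y)\le z$ gives $v(0^+,y)=0$ and, combined with $v\in C^1$, ensures that $v_y(\cdot,y)/z$ is integrable at $0^+$.

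Next, I would differentiate this identity once more in $y$: the boundary contribution $\partial_y(x\wedge d(y))\cdot v_y(d(y),y)/d(y)$ vanishes by smooth fit (independently of any differentiability of $d$, which can be avoided by passing first to a weak-derivative formulation and then invoking continuity of the candidate right-hand side), giving
$$V_{yy}(x,y)=\int_0^{x\wedge d(y)}\frac{v_{yy}(z,y)}{z}\,dz.$$
In $\mathcal{C}$, Lemma~\ref{lem:vC2cont} grants $v\in C^2(\mathcal{C})$ and $(\mathbb{L}-\rho)v(z,y)=-z h'(z)$, so one isolates $\theta^2 y^2(1-y)^2 v_{yy}(z,y)$, divides by $z$ and integrates from $0$ to $x\wedge d(y)$. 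Four of the five resulting contributions are direct: $\rho v/z$ yields the $\rho$-integral $\int_0^{x\wedge d(y)}v/z\,dz$; the term $h'(z)$ integrates to $h(x\wedge d(y))$ since $h(0)=0$; the drift term $(\beta_2+(g_2-g_1)y)\,v_z$ integrates to $(\beta_2+(g_2-g_1)y)[v(x\wedge d(y),y)-v(0^+,y)]$; and $(\lambda_2-(\lambda_1+\lambda_2)y)\,v_y/z$ yields the corresponding integral appearing in \eqref{defVyy}. The diffusion term $\tfrac12\sigma^2\,z\,v_{zz}$ is treated by a single integration by parts, producing the boundary piece $\tfrac12\sigma^2(x\wedge d(y))\,v_x(x\wedge d(y),y)$ together with a contribution $-\tfrac12\sigma^2[v(x\wedge d(y),y)-v(0^+,y)]$ which merges with the drift contribution to form the coefficient $(\beta_2+(g_2-g_1)y-\tfrac12\sigma^2)$ in \eqref{defVyy}. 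Collecting and rearranging delivers the stated formula.

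The main obstacle is the careful handling of the origin $z=0^+$: one needs $\lim_{z\downarrow 0}z\,v_x(z,y)=0$ to kill the boundary term in the integration by parts, together with integrability at $0^+$ of the quotients $v/z$, $v_y/z$ and $v_{yy}/z$. These follow from $0\le v(z,y)\le z$ and $v\in C^1(\mathcal{O})$, combined with the PDE itself, which expresses $v_{yy}$ in terms of quantities whose behaviour near $z=0$ is controlled by those same bounds. Once \eqref{defVyy} is established, the continuity $V_{yy}\in C(\mathcal{O})$ is read off directly from the right-hand side: $v$, $v_x$, $v_y$ and $h$ are continuous on $\mathcal{O}$ (Theorem~\ref{thm:C1}), the free boundary $d$ is continuous by Proposition~\ref{thm:fbLip}, the map $(x,y)\mapsto x\wedge d(y)$ is jointly continuous, and the continuity of the two integrals under their continuously varying upper limit follows from the dominated convergence theorem.
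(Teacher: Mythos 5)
Your proof is correct and follows essentially the same route as the paper's: differentiate the representation $V(x,y)=\int_0^x v(z,y)/z\,dz$, truncate the integral at $x\wedge d(y)$ using that $v_y$ vanishes on the stopping set, substitute the PDE of Lemma~\ref{lem:vC2cont} for $v_{yy}$, and integrate the diffusion term by parts. The weak-derivative formulation you mention as the remedy for the unproven differentiability of $d$ is exactly what the paper implements (via test functions, Fubini, and the generalized inverse of $d$), so the two arguments agree up to presentation.
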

\begin{proof}
Notice that $V_y(x,y)=\int_0^x\frac{1}{z}v_y(z,y)dz$, and therefore $V_y(x,\cdot)$ is a continuous function for all $x>0$ by Theorem \ref{thm:C1} (notice indeed that by the bounds in \eqref{ineq-final} and the multiplicative dependence of $\X^{z,y}$ with respect to $z$ one has that $\frac{1}{z}v_y(z,y)$ is integrable at zero). Hence, its weak derivative with respect to $y$ is a function $g \in L^1_{loc}(\mathcal{O})$ such that for any test function $\varphi \in C^{\infty}_c((0,1))$ one has
\begin{equation}
\label{weakderivative}
\int_0^1 V_y(x,y)\varphi'(y)dy = - \int_0^1 g(x,y)\varphi(y)dy.
\end{equation}
We now aim at evaluating $g$ and at showing that it coincides with the right-hand side of \eqref{defVyy}. 

Denote by $m(x)$, $x>0$, the generalized right-continuous inverse of $d(y)$, $y\in[0,1]$; that is, $m(x):= \inf\{y\in[0,1]:\,d(y) \geq x\}$. Then, noticing that $v_y=0$ on $\{(x,y) \in \mathcal{O}:\, x>d(y)\}$ and using Fubini's theorem, we can write
\begin{align}
\label{weak1}
& \int_0^1 V_y(x,y)\varphi'(y)dy = \int_0^1 \bigg(\int_0^{x \wedge d(y)} \frac{1}{z}v_y(z,y) dz\bigg) \varphi'(y)dy \nonumber \\
& = \int_0^x \frac{1}{z} \bigg(\int_{m(z)}^1 v_y(z,y) \varphi'(y)dy \bigg) dz  \\
& = \int_0^x \frac{1}{z} \Big[v_y(z,1)\varphi(1) - v_y(z,m(z))\varphi(m(z)) - \int_{m(z)}^1 v_{yy}(z,y) \varphi(y)dy \Big] dz \nonumber\\
&=  - \int_0^x \frac{1}{z} \bigg(\int_{m(z)}^1 v_{yy}(z,y) \varphi(y)dy\bigg) dz, \nonumber
\end{align}
where we have used that $v_y(z,m(z))=0$ for all $z\in (0,x)$, $x>0$, as well as $\varphi(1)=0$.

By Lemma \ref{lem:vC2cont} (cf.\ also \eqref{generator}), for any $y>m(z)$, for any $z\in (0,x)$ with $x>0$, we have that 
\begin{align}
\label{vyy}
& v_{yy}(z,y)= \frac{1}{\theta^2 y^2(1-y)^2}\Big[\rho v(z,y) - \big(\lambda_2 - (\lambda_1+\lambda_2)y\big) v_y(z,y) - zh'(z)
\nonumber \\
& - \frac{1}{2}\sigma^2 z^2 v_{xx}(z,y) - \big(\beta_2 + (g_2-g_1)y\big)z v_x(z,y)  \Big].
\end{align}
Inserting the latter expression in the last integral term on the right-hand side of \eqref{weak1}, using again Fubini's theorem and then integrating the derivatives with respect to $x$, we find
\begin{align}
\label{weak2}
& \int_0^1 V_y(x,y)\varphi'(y)dy = - \int_0^x \frac{1}{z} \bigg(\int_{m(z)}^1 v_{yy}(z,y) \varphi(y)dy\bigg) dz \nonumber \\
& = \int_0^1 \frac{\big(\lambda_2 - (\lambda_1+\lambda_2)y\big)}{\theta^2 y^2(1-y)^2}\bigg(\int_{0}^{x \wedge d(y)} \frac{1}{z} v_y(z,y) dz\bigg)\varphi(y) dy \nonumber \\
&- \int_0^1 \frac{\rho}{\theta^2 y^2(1-y)^2}\bigg(\int_{0}^{x \wedge d(y)} \frac{1}{z} v(z,y) dz\bigg)\varphi(y) dy \nonumber \\
& + \int_0^1 \Big[h\big(x\wedge d(y)\big) + \big(\beta_2 + (g_2-g_1)y\big)\big(v(x\wedge d(y),y)-v(0+,y)\big)  \\
& + \frac{1}{2}\sigma^2\big(x\wedge d(y)\big) v_x(x\wedge d(y),y) - \frac{1}{2}\sigma^2 \big(v(x\wedge d(y),y)-v(0+,y)\big)\Big] \frac{\varphi(y)}{\theta^2 y^2(1-y)^2} dy, \nonumber 
\end{align}
where we have also used that $h(0)=0$.
Finally, setting
\begin{align*}
& g(x,y):= - \frac{1}{\theta^2 y^2(1-y)^2}\Big[h\big(x\wedge d(y)\big) + \big(\beta_2 + (g_2-g_1)y - \frac{1}{2}\sigma^2\big)\big(v(x\wedge d(y),y)-v(0+,y)\big) \nonumber \\
& + \frac{1}{2}\sigma^2\big(x\wedge d(y)\big) v_x(x\wedge d(y),y)\Big] + \frac{\big(\lambda_2 - (\lambda_1+\lambda_2)y\big)}{\theta^2 y^2(1-y)^2}\bigg(\int_{0}^{x \wedge d(y)} \frac{1}{z} v_y(z,y) dz\bigg) \nonumber \\
& - \frac{\rho}{\theta^2 y^2(1-y)^2}\bigg(\int_{0}^{x \wedge d(y)} \frac{1}{z} v(z,y) dz\bigg), 
\end{align*}
we see that \eqref{weak2} reads $\int_0^1 V_y(x,y)\varphi'(y)dy = - \int_0^1 g(x,y)\varphi(y)dy$, so that $g$ identifies with the second weak derivative of $V$ with respect to $y$. Notice that $g$ is continuous by the continuity of $d$, $v$, $v_x$, $h$, and the fact that $\int_0^{x\wedge d(y)} \frac{1}{z}v(z,y)dz$ and $\int_0^{x\wedge d(y)} \frac{1}{z}v_y(z,y)dz$ are finite due to \eqref{sol:X}, \eqref{valueOS}, and \eqref{ineq-final}. The proof is therefore completed.
\end{proof}

Thanks to Lemma \ref{lem:regV} and Proposition \ref{prop:Vyy} we have that $V \in C^2(\mathcal{O}) \cap C(\overline{\mathcal{O}})$. As a byproduct of this, by the Dynamic Programming Principle and standard means based on an application of Dynkin's formula, we obtain the next result. 
\begin{proposition}
\label{prop:HJB}
Recall the second-order differential operator $\mathbb{L}$ defined in \eqref{generator}. The value function $V$ of Problem \textbf{(P3)} is a classical solution to the HJB equation
\begin{equation*}
\min\big\{\big(\mathbb{L}-\rho\big)V(x,y) + h(x), 1 - V_x(x,y)\big\}=0, \quad (x,y) \in \mathcal{O},
\end{equation*}
with boundary condition $V(0,y)=0$ for any $y\in [0,1]$. 
\end{proposition}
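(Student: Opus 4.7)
The plan is to combine the $C^2$ regularity of $V$ established in Lemma \ref{lem:regV} and Proposition \ref{prop:Vyy} with two complementary arguments: a Dynkin-type calculation giving the supersolution inequality, and direct manipulation of the representation $V(x,y) = \int_0^x v(z,y)/z\,dz$ from Theorem \ref{teo:ver-2} to identify where each term of the HJB minimum is active.

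The boundary condition $V(0,y)=0$ is immediate from the integral representation since $v(z,y) \leq z$ gives $v(z,y)/z \leq 1$. Moreover $V_x(x,y) = v(x,y)/x \leq 1$ on $\mathcal{O}$, again because $v(x,y) \leq x$ (obtained by taking $\tau = 0$ in \eqref{valueOS}), which shows the second term $1 - V_x$ in the HJB is nonnegative. Next, I would prove $(\mathbb{L}-\rho)V + h \geq 0$ on $\mathcal{O}$ by a standard Dynkin argument: fix $(x,y) \in \mathcal{O}$, take the admissible control $\nu \equiv 0$, and let $\tau_r$ be the exit time of $(X^{x,y,0}, \pi^y)$ from a small open neighbourhood compactly contained in $\mathcal{O}$. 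Applying It\^o's formula to $e^{-\rho t} V(X^{x,y,0}_t, \pi^y_t)$ up to $\tau_r$ is legitimate since $V \in C^2(\mathcal{O})$, and the stochastic integrals are true martingales on $[0,\tau_r]$. Combined with the dynamic programming inequality $V(x,y) \leq \E_{(x,y)}[\int_0^{\tau_r} e^{-\rho t} h(X^{x,y,0}_t) dt + e^{-\rho \tau_r} V(X^{x,y,0}_{\tau_r},\pi^y_{\tau_r})]$, this yields $\E_{(x,y)}[\int_0^{\tau_r} e^{-\rho t}[(\mathbb{L}-\rho)V + h](X^{x,y,0}_t, \pi^y_t) dt] \geq 0$, and dividing by $\E_{(x,y)}[\tau_r]$ and letting $r \downarrow 0$ gives the inequality by continuity of $(\mathbb{L}-\rho)V + h$.

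To show that the HJB minimum actually vanishes, I would split according to the free boundary of the auxiliary optimal stopping problem. In $\{x \geq d(y)\}$, Proposition \ref{prop:freeboundary}-(i) gives $v(x,y) = x$, so $V_x = v/x = 1$ and the minimum is attained at $1 - V_x = 0$. In $\{x < d(y)\}$, every $z \in (0,x)$ satisfies $z < d(y)$, so Lemma \ref{lem:vC2cont} yields $(\mathbb{L}_z - \rho)v(z,y) = -z h'(z)$. Dividing by $z$, integrating from $0$ to $x$, and applying integration by parts to the term $\frac{1}{2}\sigma^2 z v_{zz}(z,y)$ (the boundary contribution at the origin vanishes because $0 \leq v(z,y) \leq z$ entails $v(0+,y)=0$ and, together with the monotonicity of $z \mapsto v(z,y)$ from Proposition \ref{prop:continuousOS}-(i), also $z v_z(z,y) \to 0$ as $z \downarrow 0$), one identifies the resulting expression with $(\mathbb{L}-\rho)V(x,y)$ through the relations $x V_x = v$, $x^2 V_{xx} = x v_x - v$, $V_y = \int_0^x v_y/z\,dz$ and $V_{yy} = \int_0^x v_{yy}/z\,dz$. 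This produces $(\mathbb{L}-\rho)V(x,y) + h(x) = 0$, concluding the verification.

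The main obstacle I expect is the computational identification in the continuation region, and in particular ensuring that the boundary contribution from the integration by parts vanishes at $z=0$. This rests on the $O(z)$ bound on $v$ and on the behaviour of $v_z$ near the origin. A subsidiary technical point is the use of the dynamic programming inequality with $\nu \equiv 0$ in the presence of singular controls, which is standard but must be stated carefully since the admissible set $\mathcal{A}(x,y)$ allows for lump sum actions at $t=0$.
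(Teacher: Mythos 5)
Your plan is sound and effectively supplies the details that the paper leaves implicit when it appeals to ``the Dynamic Programming Principle and standard means based on an application of Dynkin's formula.'' The structure --- a Dynkin/DPP argument with $\nu\equiv 0$ for the supersolution inequality $(\mathbb{L}-\rho)V+h\geq0$, the pointwise bound $V_x=v/x\leq1$, then identifying on $\{x\geq d(y)\}$ that $1-V_x=0$ and on $\{x<d(y)\}$ that $(\mathbb{L}-\rho)V+h=0$ by dividing the free-boundary PDE for $v$ by $z$ and integrating --- is coherent, and the latter computation is in fact consistent with the way the paper itself arrives at the expression for $V_{yy}$ in Proposition \ref{prop:Vyy}. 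The boundary condition, the nonnegativity of $1-V_x$, and the identification in the continuation region are all correct.

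There is, however, one genuine gap: your justification that the boundary term $z\,v_z(z,y)\to0$ as $z\downarrow0$. You argue that $0\leq v(z,y)\leq z$ and the monotonicity of $z\mapsto v(z,y)$ from Proposition \ref{prop:continuousOS}-(i) entail $z\,v_z(z,y)\to0$. This implication is false in general: a nondecreasing $C^1$ function $\phi$ with $0\leq\phi(z)\leq z$ and $\phi(0+)=0$ can have $z\phi'(z)$ fail to tend to zero, because the derivative may spike on intervals of width $O(z^2)$ near $z=0$ without violating either monotonicity or the linear bound. Since the integration by parts $\int_0^x z\,v_{zz}(z,y)\,dz = x\,v_x(x,y)-v(x,y)$ is the crux of your verification, this step must be argued properly. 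The fix is available in the paper: invoke the estimate \eqref{wxcont-2} from the proof of Theorem \ref{thm:C1},
\begin{equation*}
0\ \leq\ V_{xx}(z,y)\ =\ \overline{w}_x(z,y)\ \leq\ \E\bigg[\int_0^{\tau^{\star}}e^{-\rho t}\big(\X^{1,y}_t\big)^2\,h''\big(\X^{z,y}_t\big)\,dt\bigg],
\end{equation*}
valid for $(z,y)\in\mathcal{C}$, which covers all sufficiently small $z>0$ by Proposition \ref{prop:freeboundary}-(iii). Combined with the growth condition $|h''(x)|\leq K_2(1+|x|^{(\gamma-2)^+})$ from Assumption \ref{ass:casestudy} and the integrability granted by Assumption \ref{ass:rho}, this shows $V_{xx}(z,y)$ stays bounded as $z\downarrow0$ for fixed $y$. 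Since $v=xV_x$ gives $z\,v_z(z,y)=v(z,y)+z^2V_{xx}(z,y)$, you obtain $z\,v_z(z,y)\to0$, and the integration by parts is legitimate. With this correction the proof is complete.
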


%%%%%%%%%%%%%%%%%%%%%%%%%%%%%%%%%%%%%%%%%%%%

\section*{Acknowledgments}
\noindent The research of Claudia Ceci is partially supported by ``Gruppo Nazionale per l'Analisi Matematica, la Probabilit\`a e le loro Applicazioni'' (GNAMPA) of ``Istituto Nazionale di Alta Matematica'' (INdAM). Financial support by the German Research Foundation (DFG) through the Collaborative Research Centre 1283 is gratefully acknowledged by Giorgio Ferrari. We wish to thank Luciano Campi, Tiziano De Angelis, Paola Mannucci, Fabio Paronetto, Paavo Salminen, and Wolfgang Runggaldier for useful discussions.

%%%%%%%%%%%%%%%%%%%%%%%%%%%%%%%%%%%%%%%%%%%%%%%%%%%%%%%

\appendix

\section{Filtering Results}
\label{app:proofs}

\begin{proof}[\textbf{Proof of Proposition \ref{rappre}}]
Since the innovation processes $(I,I^1)$ (see \eqref{Inn}) and the random measure $m(dt,dq)$ (see \eqref{measure-m} and \eqref{fm}) are $\mathbb H$-adapted, then $\mathbb F^I \vee \mathbb F^{I^1}\vee \mathbb F^m \subseteq \mathbb H$. In general, the latter inclusion could be strict. Let us now consider the exponential $\mathbb F$-martingale solving
\begin{equation}
dL_t = - L_t  \Big\{ \frac{\beta(Z_t)}{\sigma} dW_t + \alpha(\eta_t, Z_t) dB_t \Big \}, \quad t\geq 0,
\end{equation}
and define a probability measure $\Q$ on $(\Omega,\mathcal F)$, equivalent to $ \P$ on $\mathcal{F}_t$, and such that 
$$\frac{d\mathbb Q}{d\P}\Big|_{\mathcal F_t} = L_t, \quad t\geq 0.$$
Notice that Assumption \eqref{ass:Novikov} ensures that $L$ is indeed an $\mathbb{F}$-martingale. By Girsanov's theorem, the processes
\begin{equation} \label{Inn1}
\widetilde W_t := W_t + \int_0^t \frac{\beta(Z_s)}{\sigma} ds, \quad  \widetilde B_t := B_t + \int_0^t \alpha(\eta_s, Z_s) ds, \quad t \geq 0,
\end{equation}
are $(\Q, \mathbb F)$-independent Brownian motions. We now prove that $\mathbb F^{\widetilde W} \vee \mathbb F^{\widetilde B}\vee \mathbb F^m = \mathbb H.$ On the one hand, the inclusion $\mathbb F^{\widetilde W} \vee \mathbb F^{\widetilde B}\vee \mathbb F^m \subseteq \mathbb H$ follows from the fact that ${\widetilde W}$ and ${\widetilde B}$ turn out to be $ \mathbb H$-adapted since they can be written as
\begin{equation} \label{Inn2}
\widetilde W_t = I_t + \int_0^t \frac{\pi_s(\beta)}{\sigma} ds, \quad  \widetilde B_t := I^1_t + \int_0^t \pi_s(\alpha(\eta_s, \cdot )) ds,  \quad t \geq 0.
\end{equation}
To prove the converse, let us observe that, under the probability measure $\Q$, the process $X^0$ and $\eta$ solve the following stochastic differential equations  
$$ dX^0_t = X^0_t \sigma d\widetilde W_t, \quad X_0^0=x >0,$$
\begin{equation}\label{etaQ}
d\eta_t = \sigma_1(\eta_t) d \widetilde W_t + \sigma_2(\eta_t) d \widetilde B_t  +  \int_\mathbb R q m(dt, dq), \quad \eta_0=q \in  \I,
\end{equation}
respectively. Clearly $X^0$ is $\mathbb F^{\widetilde W}$-adapted. Recalling \eqref{measure-m1}, the solution to equation  \eqref{etaQ} can be constructed iteratively. More precisely, $\forall t \in [0, T_1)$, the process $\eta$ solves 
$$d\eta_t =   \sigma_1(\eta_t) d \widetilde W_t + \sigma_2(\eta_t) d \widetilde B_t, \quad \eta_0=q \in  \I,$$
and for any time between two consecutive jump times, i.e.\ $t \in [T_n, T_{n+1})$, $n\geq 1$, one has 
$$d\eta_t =   \sigma_1(\eta_t) d \widetilde W_t + \sigma_2(\eta_t) d \widetilde B_t, \quad  \eta_{T_n} = \eta_{T_n-} + \zeta_n.$$

By Assumption \ref{ass:eta}, this sequence stochastic differential equations has a unique strong solution on any interval $[T_n, T_{n+1})$, and this in turn gives the unique strong solution $\eta$ to \eqref{etaQ}. Moreover, $\eta$ turns out to be  $F^{\widetilde W} \vee \mathbb F^{\widetilde B}\vee \mathbb F^m $-adapted.

Then, by applying Corollary III.4.3.1 in \cite{JacodShir}, we have that every $(\Q, \mathbb H)$-local martingale $\widetilde M$ admits the decomposition 
$$\widetilde M_t = \widetilde M_0 + \int_0^t \widetilde \varphi_s d\widetilde W_s + \int_0^t \widetilde \psi_s d\widetilde B_s + \int_0^t \int_{\mathbb R} \widetilde w(s,q) m^{\pi}(dt, dq),$$
where $\widetilde \varphi$ and $\widetilde \psi$ are $\mathbb H$-adapted processes, and $\widetilde w$ is an $\mathbb H$-predictable process indexed by $\mathbb R$, such that for all $t \geq 0$
$$\int_0^t \widetilde \varphi^2_s ds < \infty, \quad \int_0^t  \widetilde \psi^2_s ds < \infty, \quad \int_0^t  \int_{\mathbb R} |\widetilde w(s,q)| m^{p, \mathbb H}(dt, dq) < \infty \quad \Q-a.s.$$ 

Let now $M$ be a $(\P, \mathbb H)$-local martingale, then $\widetilde M :=  M \widetilde L^{-1}$ is a $(\Q, \mathbb H)$-local martingale, where
$$\widetilde L_t := \E[ L_t | \mathcal H_t] = \frac{d\mathbb Q}{d\P}\Big|_{\mathcal H_t}, \quad t\geq 0.$$
Taking into account \eqref{Inn2}, we have that $\widetilde L$ solves
$$d\widetilde L_t = - \widetilde L_t  \left \{ {\frac{\pi_t(\beta)}{\sigma}} dI_t + \pi_t(\alpha(\eta_t,\cdot )) dI^1_t  \right \}, \quad  
\quad \widetilde L_0=1,$$
and by applying the product formula to $M= \widetilde M \widetilde L$, we easily obtain that
\begin{align*}
& dM_t = \widetilde M_{t^-} d\widetilde L_t + \widetilde L_{t} d\widetilde M_t + d \langle \widetilde M^c, \widetilde L^c \rangle_t \nonumber \\
& = (\widetilde L_t \widetilde \varphi_t - M_t  \frac{\pi_t(\beta)}{\sigma} ) dI_t + (\widetilde L_t \widetilde \psi_t - M_t  \pi_t(\alpha(\eta_t,\cdot )) ) dI^1_t + \int_{\mathbb R} \widetilde w(t,q) \widetilde L_t m^\pi(dt, dq).
\end{align*}

To conclude, we thus only need to set
$$\varphi_t:= \widetilde L_t \widetilde \varphi_t - M_t  \frac{\pi_t(\beta)}{\sigma}, \quad \psi_t:=\widetilde L_t \widetilde \psi_t - M_t  \pi_t(\alpha(\eta_t,\cdot )), \quad w(t,q) := \widetilde w(t,q) \widetilde L_t.$$
\end{proof}
%\vspace{0.25cm}

%%%%%%%%%%%%%%%%%%%%%%%%%%%%%%%%%%%%%%%%%%%%%%%%%%%%%%%%%%%%%%%%%%%%%%%%%%%%%%%

\begin{proof}[\textbf{Proof of Theorem \ref{filtering:eq}}]

In order to derive the filtering equation solved by ${\underline \pi}_t = (\pi_t(i); i\in S)_{t \geq 0}$, we apply the innovation approach (see, for instance, Chapter IV in \cite{Bremaud}). In this proof we shall use the two well-known facts: 
\begin{itemize}
\item[(i)] for every $\mathbb{F}$-martingale $m$, the projection over $\mathbb{H}$ is an $\mathbb{H}$-martingale; that is, $\hat m_t := \E[ m_t| \mathcal H_t]$, $t \geq 0$, is an $\mathbb{H}$-martingale; 

\item[(ii)] for any $\mathbb{F}$-progressively measurable and integrable process $\Psi$ we have that 
$$\E\bigg[ \int_0^t \Psi_s ds \Big| \mathcal H_t\bigg] -  \int_0^t \E\big[ \Psi_s  \big| \mathcal H_s\big] ds$$
is an $\mathbb{H}$-martingale.
\end{itemize}

The first step of the innovation method consists in writing the process $ \mathds{1}_{\{ Z_t = i \}}$, $i\in S$, as a semimartingale. Denoting by $L^Z$ the Markov generator of the state process $Z$, we have that 
$$L^Z f_i(j) = \sum_{k \in  S} \lambda_{ki} f_k(j), \quad i,j \in S,$$
where $f_k(j)= \mathds{1}_{\{j=k\}}$. Hence, for any $i \in S$, we can write
$$ \mathds{1}_{\{ Z_t = i \}} = f_i(Z_t) = f_i(Z_0)  + \int_0^t  L^Z f_i(Z_s) ds + m_t(i),$$
where $(m_t(i))_{t \geq 0}$ is an $\mathbb{F}$-martingale. By taking the conditional expectation with respect to $\mathcal H_t$, and using (i) and (ii) above, we obtain that
\begin{equation} 
\label{C1} 
\pi_t(i) = y_i + \int_0^t   \sum_{k \in  S} \lambda_{ki} \pi_s(k) ds + M_t(i),
\end{equation}
where $M(i)$ is an $\mathbb{H}$-martingale null at zero. 
Proposition \ref{rappre} ensures the existence of processes $\psi(i)$ and $\varphi(i)$ that are $\mathbb H$-predictable, and $w_i$ which is $\mathbb H$-predictable and indexed by $\mathbb R$, such that 
\begin{equation} 
\label{C2}
M_t(i) =  \int_0^t \psi_s(i) dI_s + \int_0^t \varphi_s(i) dI^1 _s + \int_0^t \int_{\mathbb R} w_i(s,q) m^{\pi}(ds, dq),
\end{equation}
with 
$\E[\int_0^t \varphi^2_s(i) ds] < \infty$, $\E[ \int_0^t \psi^2_s(i) ds ]< \infty$ and $\E[ \int_0^t  \int_{\mathbb R} |w_i(s,q)| m^{p, \mathbb H}(dt, dq)] < \infty$, $ t\geq 0$. 
To obtain equation \eqref{KS} it only remains to prove that 
$$\psi_s(i) = \pi_s(i) \sigma^{-1}  \Big\{ \beta_i -  \sum_{j=1}^Q  \beta_j \pi_s(j) \Big\}  , \quad \varphi_s(i)= \pi_s(i)\Big\{ \alpha(\eta_s, i) -  \sum_{j=1}^Q  \alpha(\eta_s, j) \pi_s(j) \Big\}$$
$$w_i(s,q) = w^\pi_i(s,q) - \pi_{s^-}(i)$$
with $w^\pi_i$ given in \eqref{salto}.
 
Following the same lines of the proof of Theorem 3.1 in \cite{ColaneriCeci2012} we  can derive the structure of the processes $\psi(i)$, $\varphi(i)$ by imposing the following equalities
$$\forall i \in S, \quad  \E[  f_i(Z_t) \widetilde W_t | \mathcal H_t] = \pi_t(i) \widetilde W_t,  \quad \E[  f_i(Z_t) \widetilde B_t | \mathcal H_t] = \pi_t(i) \widetilde B_t,$$ 
where $\widetilde W$ and $\widetilde B$ are the $\mathbb{H}$-Brownian motions defined in \eqref{Inn1}. To derive the expression of $w_i$, we consider a bounded process $\Gamma$ of the form $\Gamma_t = \int_0^t \int_{\mathbb R} \gamma(s,q) m(ds, dq)$, with $\gamma$ $\mathbb{H}$-predictable process indexed by ${\mathbb R}$. Since $\Gamma$ is $\mathbb{H}$-adapted, the equality
\begin{equation} \label{conti}
\forall i \in S, \quad \E[  f_i(Z_t) \Gamma_t | \mathcal H_t] = \pi_t(i) \Gamma_t \end{equation}
holds.
By applying the product rule (taking into account no common jumps between $Z$ and $N$) we obtain
$$d(f_i(Z_t) \Gamma_t )= f_i(Z_{t^-}) d\Gamma_t + \Gamma_{t^-} df_i(Z_t) =  \Gamma_{t^-} L^Z f_i(Z_t) dt + 
\int_{\mathbb R} f_i(Z_{t^-}) \gamma(t,q) m^{p, \mathbb{F}}(dt, dq) + \mathcal{M}^{\mathbb{F}}_t, $$ 
where $m^{p, \mathbb{F}}(dt, dq)$ is the $\mathbb{F}$-dual predictable projection of $m(dt,dq)$ given in \eqref{mp}, and $\mathcal{M}^{\mathbb{F}}$ is an $\mathbb{F}$-martingale.
By projection onto $\mathcal H_t$, and denoting by $\mathcal{M}^{\mathbb{H}}$ an $\mathbb{H}$-martingale, we have that
\begin{equation} \label{C3} d \E[f_i(Z_t) \Gamma_t | \mathcal H_t] = \Gamma_{t} \pi_{t}(L^Z f_i) dt  +  \lambda^N(i) \pi_{t^-}(i)  \gamma(t, c(\eta_{t^-}, i))  \mathds{1}_{\{  c(\eta_{t^-}, i)\neq 0 \}} dt + \mathcal{M}^{\mathbb{H}}_t. 
\end{equation}

On the other hand, the product rule and  \eqref{C1} and \eqref{C2} yield 
\begin{align*}
& d(\pi_{t}(i) \Gamma_t )= \pi_{t^-}(i) d\Gamma_t + \Gamma_{t^-} d\pi_t(i) + d \langle \pi(i), \Gamma \rangle_t \nonumber \\
& = \Gamma_{t} \pi_{t}(L^Z f_i) dt + \Gamma_{t^-} dM_t(i) + \int_{\mathbb R} \gamma(t,q) w_i(t,q)m(dt, dq).
\end{align*} 

Recalling that $m^{p, \mathbb{H}}(dt, dq)$ is the $\mathbb{H}$-dual predictable projection of $m(dt,dq)$ given in \eqref{dual}, we find
\begin{equation} 
\label{C4}
d(\pi_{t}(i) \Gamma_t )= \Gamma_{t} \pi_{t}(L^Z f_i) dt + \int_{\mathbb R} \gamma(t,q) w_i(t,q)m^{p, \mathbb{H}}(dt, dq) + \mathcal{M}^{\mathbb{H}}_t, 
\end{equation}
where, again, $\mathcal{M}^{\mathbb{H}}$ is an $\mathbb{H}$-martingale. 

Gathering equations \eqref{conti}, \eqref{C3}, and \eqref{C4}, we obtain that for a.e.\ $t\geq 0$
\begin{align*}
&\lambda^N(i) \pi_{t^-}(i)  \gamma(t, c(\eta_{t^-}, i))  \mathds{1}_{\{  c(\eta_{t^-}, i)\neq 0 \}} \\
& =  \sum_{j=1}^Q\pi_{t^-}(j) \lambda^N(j) \gamma(t, c(\eta_{t^-}, j)) \mathds{1}_{\{ c(\eta_{t^-}, j) \neq 0\}} ( \pi_{t^-}(i) + w_i(t, c(\eta_{t^-}, j))).
\end{align*} 

Choose now $\gamma(t,q)$ of the form $\gamma(t,q) = C_t \mathds{1}_{A}(q) \mathds{1}_{t \leq T_n}$, with $C$ any bounded, $\mathbb{H}$-predictable, positive process and $A \in {\mathcal  B}(\mathbb R)$. Observe that $\Gamma$ is bounded since $|\Gamma_t| \leq \int_0^{t \wedge T_n} C_s dN_s \leq D n$, with $D$ a positive constant. Then the following equality holds on $\{t \leq T_n\}$ 
$$\forall \in  {\mathcal  B}(\mathbb R), \quad \int_A \nu_t(i,dq) = \int_A (\pi_{t^-}(i) + w_i(t,q)) \nu_t(dq),$$
where we have set
$$\nu_t(i,dq):= \lambda^N(i)\pi_{t^-}(i) \mathds{1}_{\{ c(\eta_{t^-}, i) \neq 0\}} \delta_{c(\eta_{t^-}, i)}(dq), \quad \nu_t(dq) := \sum_{i=1}^Q \nu_t(i,dq).$$
%$$\lambda^N(j) \mathds{1}_{\{ c(\eta_{t^-}, j) \neq 0\}} \delta_{c(\eta_{t^-}, j)}(dq), \quad \nu_t(i,dq):= \lambda^N(i)\pi_{t^-}(i) \mathds{1}_{\{ c(\eta_{t^-}, i) \neq 0\}} \delta_{c(\eta_{t^-}, i)}(dq).$$
Thus, on $\{t \leq T_n\}$, 
$$w^\pi_i(t,q) =  w_i(t,q) - \pi_{t^-}(i) =  \frac{d \nu_t(i,dq)}{\nu_t(dq)}, \quad \forall i \in S.$$
Finally, since the counting process $N$ is nonexplosive, $T_n \uparrow \infty$ a.s.\ for $n \uparrow \infty$, and this yields \eqref{salto}.
\end{proof}
%\vspace{0.25cm}

%%%%%%%%%%%%%%%%%%%%%%%%%%%%%%%%%%%%%%%%%%%%%%%%%%%%%%%%%%%%%%%%%%%%%%%

\begin{proof}[\textbf{Proof of Proposition \ref{uniq}}]
By Proposition \ref{recursive}, equations \eqref{KS} and \eqref{eta1} are equivalent to a system of recursive equations between consecutive jump times, i.e.\ for $t \in [T_n, T_{n+1})$, $n=0,1,\dots$
\begin{align*} 
 \pi_t(i) =   \pi_{T_n}(i) + \int_{T_n}^t  b^{\pi}( \underline \pi_s, \eta_s, i) ds + \int_{T_n}^t \sigma^{\pi}_ 1( \underline \pi_s, i) dI_s + \int_{T_n}^t  \sigma^{\pi}_ 2( \underline \pi_s, i)  dI^1_s,  \quad  i\in S,
\end{align*}

$$\eta_t  =   \eta_{T_n} +  \int_{T_n}^t  b^{\eta} ( \underline \pi_s, \eta_s) ds + \int_{T_n}^t \sigma_1(\eta_s) dI_s + \int_{T_n}^t \sigma_2( \eta_s) dI^1_s,$$

where we have set 
$$b^{\pi}(\underline y , q, i):= \sum_{j=1}^Q \lambda_{ji}  y_j -   y_i\Big[ \lambda^N(i) \mathds{1}_{\{ c( q, i) \neq 0\}}  - \sum_{j=1}^Q   \lambda^N(j) y_j \mathds{1}_{\{ c( q, j) \neq 0\}} \Big], \quad b^\eta (\underline y , q):= \sum_{j=1}^Q  y_j b_1(q, j),$$
$$\sigma^{\pi}_ 1( \underline y, i):= \sigma^{-1}  y_i \Big\{ \beta_i  -  \sum_{j=1}^Q  \beta_j  y_j \Big\}, \quad \sigma^{\pi}_ 2( \underline y, i):= y_i \Big\{ \alpha(q, i)  -  \sum_{j=1}^Q  \alpha( q, j) y_j \Big\},$$
with the update at time $T_n$ given by

\begin{equation} 
\label{upd}
 \pi_{T_n}(i) = \frac{ \lambda^N(i)  \pi_{T_n^-}(i) \mathds{1}_{\{ \zeta_n = c(\eta_{T_n^-}, i) \}}}{\sum_{j=1}^Q\lambda^N(j) \pi_{T_n^-}(j) \mathds{1}_{\{ \zeta_n = c(\eta_{T_n^-}, j) \}}}, \quad  i \in S, \qquad  \eta_{T_n} = \eta_{T_n^-} + \zeta_n. \end{equation}

Recall that, by assumption, the function $\alpha(q, i)$ given in \eqref{alfa}
%$$\alpha(q, i) = \sigma_2(q)^{-1}  \big\{ b_1(q, i)  - \sigma^{-1} \beta_i \sigma_1(q)\big\}$$ 
is locally-Lipschitz with respect to $q$ and satisfies a (global) sublinear growth condition with respect to $q \in  \I$, uniformly in $i \in S$. 

We now develop the proof of uniqueness by distinguishing among three different cases related to the jumps' amplitude $c$; namely, $c\neq0$, $c=0$, and $c\in \mathbb{R}$.

In the case $c\neq0$, we have that $b^{\pi}(\underline y , q, i):= \sum_{j=1}^Q \lambda_{ji}  y_j -   y_i\Big[ \lambda^N(i)  - \sum_{j=1}^Q   \lambda^N(j) y_j  \Big],$ and it is easy to verify that between two consecutive jump times the pair $(\underline {\pi}, \eta)$ solves a $(Q+1)$-dimensional stochastic differential equation with coefficients satisfying locally-Lipschitz conditions and (global) sublinear growth conditions with respect to $(\underline y, q) \in \mathcal{Y} \times \mathbb R$, uniformly in $i \in S$. As a consequence, strong uniqueness holds between two consecutive jump times; i.e.\ for $t\in[T_{n-1}, T_{n})$, $n=1,\dots$.  Moreover, since the update at jump time $T_n$ (see \eqref{upd}) depends on the process $(\underline {\pi}_t, \eta_t)$ for $t \in [T_{n-1}, T_{n})$, we have strong uniqueness of the solution to system \eqref{KS} and \eqref{eta1} for all $t\geq0$. 

In the case $c=0$, equations \eqref{KS} and \eqref{eta1} reduce to 
\begin{align*} 
d\pi_t(i) =   b^{\pi}( \underline \pi_t, \eta_t, i) dt +\sigma^{\pi}_ 1( \underline \pi_t, i) dI_t + \sigma^{\pi}_ 2( \underline \pi_t, i)  dI^1_t,  \quad  i\in S, \quad t \geq 0,
\end{align*}

$$d\eta_t  =   b^{\eta} ( \underline \pi_t, \eta_t) dt + \sigma_1(\eta_t) dI_t + \sigma_2( \eta_t) dI^1_t, \quad t \geq 0,$$
where, in particular, $b^{\pi}(\underline y , q, i)= \sum_{j=1}^Q \lambda_{ji}  y_j$. It is easy to check that also in this case strong uniqueness follows by the locally-Lipschitz property of the coefficients and by their (global) sublinear growth condition. 

In the case $c\in \mathbb{R}$ the jumps' amplitude can assume any possible real value. In particular, $c$ can be such that $\eta$ and $N$ do not have only common jumps: $N$ might jump at a time at which $c(\eta_{t^-},Z_{t^-}) =0$, so that $\eta$ does not jump at that time. The treatment of this case is more delicate and should be performed separately. Indeed, the uniqueness cannot be proved by using the arguments employed in the previous two cases because of the presence of $\mathds{1}_{\{ c( q, i) \neq 0\}}$ in the coefficient $b^{\pi}$ which prevents to prove Lipschitz-continuity of $b^{\pi}$ with respect to $q$. However, one might prove uniqueness by relying on the filtered martingale problem associated to the infinitesimal generator of the triplet $(Z,X^0,\eta)$. We refer to the seminal paper \cite{KurtzOcone} and to Theorem 3.3 and Appendix B in the more recent \cite{ColaneriCeci2012}.

\end{proof}

\renewcommand{\theequation}{A-\arabic{equation}}

%%%%%%%%%%%%%%%%%%%%%%%%%%%%%%%%%%%%%%%%%%%%%%%%%%%%


\begin{thebibliography}{99}

\bibitem{Aronson}\textsc{Aronson, D.G.}\ (1967). Bounds for the Fundamental Solution of a Parabolic Equation. \emph{Bull.\ Amer.\ Math.\ Soc.} \textbf{73(6)} pp.\ 890--896.

\bibitem{BK}\textsc{Baldursson, F.M., Karatzas, I.}\ $(1997)$. Irreversible Investment and Industry Equilibrium. \emph{Finance Stoch.} \textbf{1} pp.\ 69--89.

%\bibitem{BDeAFG18} \textsc{Bandini, E., De Angelis, T., Ferrari, G., Gozzi, F.}\ $(2018)$. The Dividend Problem with Stochastic Discount. In preparation.

\bibitem{BoroSal} \textsc{Borodin, A.N., Salminen, P.}\ (2015). \emph{Handbook of Brownian Motion - Facts and Formulae} (2nd edition). Birkh\"auser.

\bibitem{Bremaud} \textsc{Br\'emaud, P.}\ (1980). \emph{Point Processes and Queues: Martingale Dynamics}. Springer-Verlag.

\bibitem{CadAgui} \textsc{Cadenillas, A., Huam\'an-Aguilar, R.}\ $(2016)$. Explicit Formula for the Optimal Government Debt Ceiling. \emph{Ann.\ Oper.\ Res.} \textbf{247(2)} pp.\ 415--449.

\bibitem{CadAgui2} \textsc{Cadenillas, A., Huam\'an-Aguilar, R.}\ (2018). On the Failure to Reach the Optimal Government Debt Ceiling. Forthcoming on \emph{Risks}.

%\bibitem{CMZ11} \textsc{Cecchetti, S.G., Mohanty, M.S., Zampolli F.}\ (2011). The Real Effects of Debt. \textsl{Bank for International Settlements}. 

%\bibitem{Ceci06} \textsc{Ceci, C.} \ (2006). Risk Minimizing Hedging for a Partially Observed High Frequency Data Model. \emph{Stochastics} \textbf{78(1)}, pp.\ 13--31.

\bibitem{CeciGerardi98} \textsc{Ceci, C., Gerardi, A.}\ (1998). Partially Observed Control of a Markov Jump Process with Counting Observations: Equivalence with the Separated Problem. \emph{Stoch.\ Process.\ Appl.}\ \textbf{78}, pp.\ 245--260. 

\bibitem{CeciGerardi00}  \textsc{Ceci, C., Gerardi, A.}\ (2000). Filtering of a Markov Jump Process with Counting Observations. \emph{Appl.\ Math.\ Optim.}\ \textbf{42(1)}, pp.\ 1--18.

\bibitem{CeciGerardi06}  \textsc{Ceci, C., Gerardi, A.}\ (2006). A Model for High Frequency Data under Partial Information: A Filtering Approach. \emph{Int.\ J.\ Theoretical Appl.\ Finance} \textbf{9(4)}, pp.\ 1--22.

\bibitem{Ceci2012} \textsc{Ceci, C.}\ (2012). Utility Maximization with Intermediate Consumption under Restricted Information for Jump Market Models. \emph{Int.\ J.\ Theoretical Appl.\ Finance} \textbf{15(6)} pp.\ 1--34.

\bibitem{ColaneriCeci2012} \textsc{Ceci, C., Colaneri, K.}\ (2012). Nonlinear Filtering for Jump Diffusion Observations. \emph{Adv.\ Appl.\ Probab.} \textbf{44(3)}, pp.\ 678--701.

\bibitem{ColaneriCeci2014} \textsc{Ceci, C., Colaneri, K.}\ (2014). The Zakai Equation of Nonlinear Filtering for Jump-Diffusion Observation: Existence and Uniqueness. \emph{Appl.\ Math.\ Optim.}\ \textbf{69(1)}, pp.\ 47--82. 

\bibitem{CCMS} \textsc{Christensen, S., Crocce, F., Mordecki, E., Salminen, P.}\ (2018). On Optimal Stopping of Multidimensional Diffusions. \textbf{ArXiv}:1611.00959. Forthcoming on \emph{Stoch.\ Process.\ Appl.}

\bibitem{DeA18} \textsc{De Angelis, T.}\ (2018). Optimal Dividends with Partial Information and Stopping of a Degenerate Reflecting Diffusion. \textbf{ArXiv}:1805.12035.

\bibitem{DeAFeFe}\textsc{De Angelis, T., Federico, S., Ferrari, G.}\ (2017). Optimal Boundary Surface for Irreversible Investment with Stochastic Costs. \emph{Math.\ Oper.~Res.}\ \textbf{42(4)}, pp.\ 1135--1161.

\bibitem{DeAGeVi} \textsc{De Angelis, T., Gensbittel, S., Villeneuve, S.}\ (2017). A Dynkin Game on Assets with Incomplete Information on the Return. \textbf{Arxiv}:1705.07352.
 
\bibitem{DeAP18} \textsc{De Angelis, T., Peskir, G.}\ (2018). Global $C^1$ Regularity of the Value Function in Optimal Stopping Problems. \textbf{ArXiv}:1812.04564.

\bibitem{DYN} \textsc{Dynkin, E.B.}\ (1965). \emph{Markov Processes}, Vol.\ II. Springer.

\bibitem{DV} \textsc{D\'ecamps, J.P., Villeneuve, S.}\ (2015). Integrating Profitability Prospects and Cash Management. TSE Working Paper, n.\ 15-570.

\bibitem{Ell} \textsc{Elliott, R.J., Aggoun, L., Moore J.B.} \ (1995). \emph{Hidden Markov Models: Estimation and Control}, Springer.

\bibitem{ElKaroui} \textsc{El Karoui, N., Huu Nguyen, D., Jeanblanc-Picqu\'e, M.}\ (1988). Existence of an Optimal Markovian Filter for Control under Partial Observations. \emph{SIAM J.\ Control Optim.}\ \textbf{26(5)} pp.\ 1025--1061.

\bibitem{ECB13} \textsc{European Central Bank}\ $(2013)$. Growth Effects of High Government Debt. \emph{Monthly Bulletin March 2013}, pp.\ 82--84.

%\bibitem{FS86} \textsc{Fabes, E.B., Stroock, D.V.}\ (1986). A New Proof of Moser's Parabolic Harnack Inequality Using the Old Ideas of Nash. \emph{Arch.\ Ration.\ Mech.\ Anal.}\ \textbf{96(4)} pp.\ 327--338.

\bibitem{Ferrari18} \textsc{Ferrari, G.}\ (2018). On the Optimal Management of Public Debt: a Singular Stochastic Control Problem. \emph{SIAM J.\ Control Optim.}\ \textbf{56(3)} pp.\ 2036-2073.

\bibitem{FeRo} \textsc{Ferrari, G., Rodosthenous, N.}\ (2018). Optimal Management of Debt-to-GDP Ratio with Regime-Switching Interest Rate. \textbf{ArXiv}:1808.01499.

%\bibitem{FlemingSoner} \textsc{Fleming, W.H., Soner, H.M.}\ (2005). \emph{Controlled Markov processes and viscosity solutions}. 2nd Edition. Springer.

\bibitem{Frey} \textsc{Frey, R.,  Schimdt, T.}\ (2012)- Pricing and Hedging of Credit Derivatives via the Innovation Approach to Nonlinear Filtering. \emph{Finance Stoch.}\ \textbf{16}, pp.\ 105-133.

\bibitem {GS} \textsc{Gihman, I.J., Skorohod, A.V.}\ (1972). \emph{Stochastic Differential Equations}. Springer-Verlag.  

\bibitem{GT} \textsc{Gilbarg, D., Trudinger, N.S.}\ (2001). \emph{Elliptic Partial Differential Equations of the Second Order}. Springer.

\bibitem{Hamilton} \textsc{Hamilton, J.D.}\ (1989). A New Approach to the Economic Analysis of Nonstationary Time Series and the Business Cycle. \emph{Econometrica}\ \textbf{57(2)} pp.\ 357--384.

\bibitem{Jacod} \textsc{Jacod, J.} \ (1979). \emph{Calcul Stochastique et Probl\`emes de Martingales}. Springer-Verlag.

\bibitem{JacodShir} \textsc{Jacod, J., Shiryaev, A.N.}~(2003). \emph{Limit Theorems for Stochastic Processes}. Springer-Verlag, Berlin Heidelberg.

\bibitem{JP2017} \textsc{Johnson, P., Peskir, G.}\ (2017). Quickest Detection Problems for Bessel Processes. \emph{Ann.\ Appl.\ Probab.}\ \textbf{27(2)} pp.\ 1003--1056.

\bibitem{Kall} \textsc{Kallianpur, G.}\ (1980). \emph{Stochastic Filtering Theory}. Springer-Verlag.

%\bibitem{KS84} \textsc{Karatzas, I., Shreve, S.E.}\ (1984). Connections Between Optimal Stopping and Singular Stochastic Control I. Monotone Follower Problems. \emph{SIAM J.~Control Optim.}~\textbf{22(6)}, pp.\ 856--877.

\bibitem{KS} \textsc{Karatzas, I., Shreve, S.E.}~(1991). \emph{Brownian Motion and Stochastic Calculus} (Second Edition). Graduate Texts in Mathematics 113, Springer-Verlag, New York.

\bibitem{KS2} \textsc{Karatzas, I., Shreve, S.E.}~(1998). \emph{Methods of Mathematical Finance}. Applications of Mathematics (New York), 39. Springer-Verlag, New York.

%\bibitem{KT81} \textsc{Karlin, S., and Taylor, H.E.}\ (1981). \emph{A Second Course in Stochastic Processes}. Elsevier.

\bibitem{KKM} \textsc{Kliemann, W.H., Koch, G., Marchetti F.}\ (1990). On the Unnormalized Solution of the Filtering Problem with Counting Process Observations. \emph{IEEE Transactions on Information Theory} \textbf{36}, pp.\ 1415--1425.

\bibitem{KurtzOcone} \textsc{Kurtz, T.G., Ocone, D.}~(1988). Unique Characterization of Conditional Distributions in Nonlinear Filtering. \emph{Ann.\ Probab.}\ \textbf{16(1)}, pp.\ 80--107.

\bibitem{LipShir} \textsc{Liptser, R., Shiryaev, A.N.}~(2001). \emph{Statistics of Random Processes. I.\ General Theory}. Springer-Verlag, Berlin Heidelberg.

\bibitem{MenaldiRobin83} \textsc{Menaldi, J.L., Robin, M.}~(1983). On Optimal Correction Problems with Partial Information. \emph{Stoch.\ Anal.\ Appl.}\ \textbf{3(1)}, pp.\ 63--92.

\bibitem{OksendalSulem} \textsc{\O ksendal, B., Sulem, A.}\ (2012). Singular Stochastic Control and Optimal Stopping with Partial-Information of It\^o-L\'evy Processes. \emph{SIAM J.~Control Optim.}~\textbf{50(4)}, pp.\ 2254--2287.

\bibitem{PS}\textsc{Peskir, G., Shiryaev, A.N.}\ (2006). \emph{Optimal Stopping and Free-Boundary Problems}. Lectures in Mathematics ETH, Birkhauser.

\bibitem{Peskir2018}\textsc{Peskir, G.}\ (2018). \emph{Continuity of the Optimal Stopping Boundary for Two-Dimensional Diffusions}. Research Report No.\ 4 (2015), Probab.\ Statist.\ Group Manchester. Forthcoming on \emph{Ann.\ Appl.\ Probab.}

\bibitem{Pr04} \textsc{Protter, P.}\ (2004). \emph{Stochastic Integration and Differential Equations}. Springer-Verlag, Berlin Heidelberg.

%\bibitem{RRR12} \textsc{Reinhart, C.M., Reinhart, V.R., Rogoff, K.S.}\ (2012). Debt Overhangs: Past and Present (No.\ w18015). \textsl{National Bureau of Economic Research}.

\bibitem{RY} \textsc{Revuz, D., Yor, M.}\ (1999). \emph{Continuous Martingales and Brownian Motion}. Springer-Verlag, Berlin.

%\bibitem{Shir} \textsc{Shiryaev, A.N.}\ (2008). \emph{Optimal Stopping Rules}. Springer.

%\bibitem{Shreve} \textsc{Shreve S.E.}~(1988). \emph{An Introduction to Singular Stochastic Control}, in {Stochastic Differential Systems, Stochastic Control Theory and Applications}, IMA Vol.\ 10, W.\ Fleming and P.-L.\ Lions, ed.\ Springer-Verlag, New York.

\bibitem{SW2003} \textsc{Stock J.H., Watson, M.W.}\ (2003). How Did Leading Indicator Forecasts Perform During the 2001 Recession? \emph{Federal Reserve Bank of Richmond -  Economic Quarterly}\ \textbf{89(3)} pp.\ 71--90.

\bibitem{WK15} \textsc{Woo, J., Kumar, M.S.}\ (2015). Public Debt and Growth. \textsl{Economica} \textbf{82(328)} pp.\ 705--739.

\bibitem{Zakai} \textsc{Zakai, M.} \ (1969). On the Optimal Filtering of Diffusion Processes. \emph{Z.\ Wahrscheinlichkeitstheor.\ Verw.\ Geb.}\ \textbf{11(3)}, pp. 230--243.

\end{thebibliography}
\end{document}